\DeclareMathOperator{\Gal}{Gal}
\DeclareMathOperator{\GL}{GL}
\DeclareMathOperator{\SL}{SL}
\DeclareMathOperator{\re}{Re}
\DeclareMathOperator{\im}{Im}
\DeclareMathOperator{\Fr}{Fr}
\DeclareMathOperator{\Spec}{Spec}
\newcommand{\FF}{\mathbb{F}}
\newcommand{\QQ}{\mathbb{Q}}
\newcommand{\ZZ}{\mathbb{Z}}
\newcommand{\RR}{\mathbb{R}}
\newcommand{\CC}{\mathbb{C}}
\newcommand{\PP}{\mathbb{P}}
\definecolor{dblue}{rgb}{0,0,0.7}
\newtheoremstyle{mythm}{11pt}{11pt}{\it\color{dblue}}{}{\bf\color{dblue}}{.}{ }{}
\theoremstyle{mythm}
\newtheorem{thm}{Theorem}
\newtheorem{prop}[thm]{Proposition}
\newtheorem{cor}[thm]{Corollary}
\newtheorem{lem}[thm]{Lemma}
\newtheorem*{ms_con}{Stevens's Manin constant conjecture}
\newcommand\sm[4]{\bigl(\begin{smallmatrix} #1 & #2 \\ #3 & #4 \end{smallmatrix}\bigr)}
\newcommand\LL{\mathscr{L}}
\newcommand\La{L^{{\! \mathrm{a}}}}   
\newcommand\LLa{\LL^{{\mathrm{a}}}}
\begin{document}

\title{Integrality of twisted $L$-values of elliptic curves}

\author[1]{Hanneke Wiersema}
\author[2]{Christian Wuthrich}
\affil[1]{University of Cambridge}
\affil[2]{University of Nottingham}

\amssubj{11G40, (11G05, 11F67)}


\maketitle

\begin{abstract}
  Under suitable, fairly weak hypotheses on an elliptic curve $E/\mathbb{Q}$ and a primitive non-trivial Dirichlet character $\chi$, we show that the algebraic $L$-value $\LL(E,\chi)$ at $s=1$ is an algebraic integer.
  For instance, for semistable curves $\LL(E,\chi)$ is integral whenever $E$ admits no isogenies defined over $\QQ$.
 Moreover we give examples illustrating that our hypotheses are necessary for integrality to hold.
\end{abstract}

\tableofcontents

\section{Introduction}
Let $E$ be an elliptic curve defined over~$\QQ$ and let~$\chi$ be a primitive Dirichlet character.
The value of the $L$-function of $E$ twisted by~$\chi$ at $s=1$ can be normalised by periods to obtain an algebraic value~$\LL(E,\chi)$.
We aim to investigate what conditions on $E$ and $\chi$ guarantee the integrality of $\LL(E,\chi)$.

These values appear in many places in the literature.
For the trivial representation the value $\LL(E,\chi)= L(E,1)/\Omega_+(E)$ is the subject of the Birch and Swinnerton-Dyer conjecture.
The equivariant version of this conjecture, as formulated for instance in~\cite{ebsd}, predicts a precise arithmetic interpretation of $\LL(E,\chi)$ for $\chi$ in great generality.
In Iwasawa theory one interpolates these values $p$-adically as the conductor of $\chi$ is of the form $M\cdot p^n$ with $n>0$ varying to get the $p$-adic $L$-function; see~\cite{mtt}.
In a recent paper, which was the starting point for this work~\cite{dew}, Dokchitser, Evans and the first named author study the Artin formalism of such $L$-values and draw surprising conclusions assuming conjectures like the Birch and Swinnerton-Dyer conjecture.

While it has been known to experts since the 1970s that $\LL(E,\chi)$ is an algebraic number, the above conjectures predict that they are very often algebraic integers.
When there is a torsion point on $E$ whose field of definition is an abelian extension of $\QQ$, then the value $\LL(E,\chi)$ might be non-integral for characters corresponding to that extension.
Our aim is to prove the first general integrality statements confirming the expectations, building on earlier partial results on the denominator, like~\cite{stevens89} and~\cite{wuthrich_integral}.
The main focus concerns subtle questions when dealing with primes of additive reduction that divide the conductor of $\chi$.
Resulting from our work here, it is now possible to find all characters for a given $E$ for which $\LL(E,\chi)$ is non-integral and we have included a complete list for curves of small conductor at the end.

Let us now define $\LL(E,\chi)$ with the aim to state our results.
For a given $\chi$, we will write $m$ for the conductor of $\chi$ and $d$ for its order.
Set $\epsilon=\chi(-1) \in \{\pm 1\}$ depending on whether we deal with an even or odd character.
The Gauss sum of $\chi$ is
\[
  G(\chi) = \sum_{a \bmod m} \chi(a) \exp(2\pi i a/m).
\]
Fix a global Néron differential $\omega$ on $E$.
Let $c_{\infty}$ denote the number of connected components of $E(\RR)$.
We use the following definition for the periods of $E$, which is best suited for Artin formalism as in~\cite{dew}:
\[
  \Omega_+(E) = \int_{E(\RR)}\omega = c_{\infty} \cdot \int_{\gamma^+} \omega
  \qquad\text{ and }\qquad
  \Omega_{-}(E) = \int_{\gamma^-} \omega
\]
where we picked a generator $\gamma^+$ of the subgroup of $H_1\bigl(E(\CC),\ZZ\bigr)$ fixed by complex conjugation and a generator $\gamma^-$ for the subgroup where complex conjugation acts by multiplication by $-1$ in such a way that $\Omega_+(E)>0$ and $\Omega_{-}(E)\in i \RR_{>0}$.

We will use the motivic definition of the $L$-function $L(E,\chi,s)$ given in full detail in Section~\ref{lfn_sec}.
It is directly related to the following definition commonly used for modular forms:
If we write the $L$-function $L(E,s)$ of $E$ as the Dirichlet series $\sum_{n\geq 1} a_n \,n^{-s}$, then we define
\[ \La(E,\chi,s) = \sum_{n\geq 1} \frac{\chi(n)\, a_n}{n^{s}}.\]
If $m$ is coprime to the conductor $N$ of $E$, then our choice of normalisations implies that
$L(E,\chi,s) = \La(E,\bar\chi,s)$,
where $\bar\chi$ is the complex conjugate of $\chi$.
For the general case there is an extra correction factor as explained in Section~\ref{lfn_sec}.

The algebraic $L$-value is
\begin{equation}\label{LL_eq}
  \LL(E,\chi) = \frac{L(E,\chi,1)\cdot m }{G(\chi)\cdot \Omega_{\epsilon}(E)} = \epsilon \cdot \frac{L(E,\chi,1)\cdot G(\bar\chi)}{\Omega_{\epsilon}(E)}.
\end{equation}
In Section~\ref{lfn_sec} we will deduce from the theorem of Manin and Drinfeld~\cite{manin, drinfeld} that $\LL(E,\chi)$ is an algebraic number in the field $\QQ(\zeta_d)$ of values of $\chi$.

First a result if one is willing to assume that the curve is semistable:
\begin{thm}\label{semistable_thm}
  Suppose $E/\QQ$ is a semistable $X_0$-optimal elliptic curve.
  Then $\LL(E,\chi)\in \ZZ[\zeta_d]$ for all non-trivial primitive Dirichlet characters $\chi$ of order $d$.
\end{thm}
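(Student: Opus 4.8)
The plan is to express $\LL(E,\chi)$ in terms of modular symbols and to exploit the $X_0$-optimality to control their denominators. Since $E$ is semistable and optimal, the Manin constant equals $1$, so the modular parametrisation $\phi\colon X_0(N)\to E$ satisfies $\phi^*\omega = 2\pi i\, f(\tau)\,d\tau$ for the normalised newform $f$ attached to $E$. Writing $\lambda(r)=2\pi i\int_{r}^{i\infty} f(\tau)\,d\tau$, the Birch--Stevens formula gives, for $\chi$ of conductor $m$ coprime to $N$,
\[
  L(E,\chi,1)=\frac{1}{G(\bar\chi)}\sum_{a \bmod m}\bar\chi(a)\,\lambda(a/m),
\]
so that $\LL(E,\chi)=\dfrac{\epsilon}{\Omega_\epsilon}\,v$ with $v=\sum_{a\bmod m}\bar\chi(a)\,\lambda(a/m)$. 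The primes $p\mid\gcd(m,N)$, where the motivic $L$-function and the naive twist $\La(E,\chi,s)$ diverge, I would treat separately using that $E$ has multiplicative reduction there; the local discrepancy only involves $a_p=\pm1$, a unit, and so cannot affect integrality.

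The first key step is a boundary computation. The symbol $\{a/m,\infty\}$ lives in the relative homology $H_1\bigl(X_0(N),\text{cusps},\ZZ\bigr)$, with boundary $(\infty)-(a/m)$. When $\gcd(m,N)=1$ every cusp $a/m$ (with $\gcd(a,m)=1$, which is all that contributes) is $\Gamma_0(N)$-equivalent to the cusp $0$, so $\partial\{a/m,\infty\}=(\infty)-(0)$ is independent of $a$; since $\chi$ is non-trivial we have $\sum_a\bar\chi(a)=0$, and hence the twisted combination $\sum_a\bar\chi(a)\{a/m,\infty\}$ has vanishing boundary. It therefore defines a genuine class in $H_1\bigl(X_0(N),\ZZ\bigr)\otimes\ZZ[\zeta_d]$. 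Optimality of $E$ means precisely that $\phi_*\colon H_1\bigl(X_0(N),\ZZ\bigr)\to H_1\bigl(E(\CC),\ZZ\bigr)=\Lambda$ is surjective, so pushing forward and pairing with $\omega$ shows $v\in\Lambda\otimes\ZZ[\zeta_d]$, with no denominators at all.

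Next I would project to the $\epsilon$-eigenspace for complex conjugation $c$. Since $c$ sends $\lambda(r)$ to $\lambda(-r)$ and $\bar\chi(-1)=\epsilon$, one checks $c\,v=\epsilon\,v$; as $\ZZ[\zeta_d]$ is flat over $\ZZ$ this gives $v\in\Lambda^{\epsilon}\otimes\ZZ[\zeta_d]$, where $\Lambda^\epsilon$ is the rank-one eigenlattice. For odd $\chi$ we have $\Lambda^-=\ZZ\gamma^-$ with $\int_{\gamma^-}\omega=\Omega_-$, and $\LL(E,\chi)=-v/\Omega_-\in\ZZ[\zeta_d]$ immediately. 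For even $\chi$, however, $\Lambda^+=\ZZ\gamma^+$ with $\int_{\gamma^+}\omega=\Omega_+/c_\infty$, so the argument only yields $\LL(E,\chi)\in\tfrac{1}{c_\infty}\ZZ[\zeta_d]$.

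I expect the main obstacle to be removing this factor $c_\infty$ for even characters when $E(\RR)$ is disconnected ($c_\infty=2$, i.e. $\Delta_E>0$). This requires showing that the even twisted symbol $\sum_a\bar\chi(a)\{a/m,\infty\}$ actually maps into $2\,\Lambda^+\otimes\ZZ[\zeta_d]$, which is a genuine $2$-adic statement about the action of complex conjugation on the integral homology of $X_0(N)$ and about how $\phi$ matches the real components of $X_0(N)$ with those of $E$. Here the semistability of $E$ should be decisive, constraining both the real component structure and the $2$-adic behaviour of the modular symbols, and I would combine it with the denominator bounds of \cite{stevens89} and \cite{wuthrich_integral} to force the required $2$-divisibility. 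Handling this $2$-primary contribution, together with the multiplicative primes $p\mid\gcd(m,N)$, is where the real work lies; the odd part of the statement, by contrast, is essentially formal once optimality and the Manin constant are in hand.
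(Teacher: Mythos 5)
Your skeleton (Birch's formula, cusp equivalence to kill the boundary contribution, pairing under complex conjugation) matches the paper's Section~\ref{geom_sec}, but the two points you defer are exactly where the proof lives, and in both cases your proposed repair is off-track. The main gap is the factor $c_\infty$ for even $\chi$. Projecting the \emph{total} class $v$ to the eigenlattice can never do better than $v\in\Lambda^{+}\otimes\ZZ[\zeta_d]$, i.e.\ $\LL(E,\chi)\in\tfrac{1}{c_\infty}\ZZ[\zeta_d]$: the information needed to remove the $2$ is termwise, and your homological packaging discards it, since only the full twisted sum is a closed cycle. The paper (Lemma~\ref{muint_lem} and Proposition~\ref{mneqdelta_prop}) instead makes each term integral: $\mu\bigl(\tfrac{a}{m}\bigr)=\lambda\bigl(\tfrac{a}{m}\bigr)-\lambda\bigl(\tfrac{x}{D}\bigr)$, with $\tfrac{x}{D}$ a cusp $\Gamma_0(N)$-equivalent to $\tfrac{a}{m}$, is the period of a closed loop, so $\mu\bigl(\tfrac{a}{m}\bigr)\in\Lambda$ individually (using $c_0=1$). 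Pairing $a$ with $-a$ via $\mu(-r)=\overline{\mu(r)}$ then produces summands $\mu+\epsilon\,\overline{\mu}$, and a direct check of \emph{both} lattice shapes shows $z+\bar z=2\re(z)\in\Omega_+(E)\,\ZZ$ and $z-\bar z\in c_\infty\,\Omega_-(E)\,\ZZ$ for every $z\in\Lambda$ (here the paper's normalisation $\Omega_+(E)=c_\infty\int_{\gamma^+}\omega$ is essential, as you noted). In other words, the trace of complex conjugation maps $\Lambda$ into $\Omega_+(E)\,\ZZ$, a sublattice of index $c_\infty$ in $\Lambda^{+}$ --- that elementary fact is the entire content of your missing factor $2$; no $2$-adic statement about $H_1\bigl(X_0(N),\ZZ\bigr)$, no matching of real components, and no input from \cite{stevens89} or semistability is needed at this step.

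The second gap is $\gcd(m,N)>1$, and your plan to ``treat it separately using $a_p=\pm1$'' conflates two distinct issues and resolves neither. On the modular-symbol side the obstacle is that the cusps $\tfrac{a}{m}$ are no longer equivalent to $0$; the fix is again cusp equivalence, not local factors: since $N$ is squarefree, $\delta=\gcd(D,N/D)=1$ for $D=\gcd(m,N)$, so by Manin's Proposition~2.2 \emph{all} cusps $\tfrac{a}{m}$ with $\gcd(a,m)=1$ are $\Gamma_0(N)$-equivalent to the single cusp $\tfrac{1}{D}$, and your boundary-vanishing argument (made termwise as above) goes through verbatim for every $m$. On the $L$-function side, the claim that a discrepancy between $L(E,\chi,s)$ and $\La(E,\bar\chi,s)$ ``cannot affect integrality because $a_p$ is a unit'' is wrong as logic --- a correction factor evaluated at $s=1$ has the shape $p/(p-\ast)$ and is not a unit in general; Example~6 of Section~\ref{ex_sec} exhibits exactly such a factor introducing a new $2$ into the denominator --- but it is also moot: for semistable $E$ and $p\mid\gcd(m,N)$, the character is ramified at $p$ while inertia acts on $V_E$ unipotently up to an unramified quadratic twist, so $(V_E\otimes V_\chi)^{I_p}=0$ and $\bar\chi(p)=0$, making both local factors equal to $1$. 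Hence $\LL(E,\chi)=\LLa(E,\chi)$ identically (the lemma of Section~\ref{lfn_sec} applies vacuously, there being no additive primes). Finally, a small point: optimality enters only through the theorem of \cite{ces} that $c_0=1$ for semistable $X_0$-optimal curves; the surjectivity of $\varphi_*$ on $H_1$ that you invoke plays no role, since pushing forward an integral cycle is integral regardless.
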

In particular, $\LL(E,\chi)$ is integral if $E$ is semistable and admits no isogenies defined over~$\QQ$.

To state a more general result, we need to recall the definition of the Manin constant.
Let $f$ be the newform of level $N$, equal to the conductor of $E$, and weight $2$ associated to the isogeny class of $E$.
Also write $\varphi_0\colon X_0(N)\to E$ for a modular parametrisation of $E$ of minimal degree such that the Manin constant $c_0=c_0(E)$ defined by $\varphi_0^*(\omega) = c_0(E)\cdot 2\pi i f(\tau) \, d\tau$ is positive.
It is known that $c_0$ is an integer.
The original conjecture by Manin states that the Manin constant of the $X_0$-optimal curve in the isogeny class of $E$ is~$1$.
See~\cite{manin_constant} for details on the conjecture and an overview of some results.
The conjecture is verified routinely for all curves in Cremona's database~\cite{cremona}.
Yet there are non-optimal curves for which $c_0>1$.
If one uses the Manin constant $c_1= c_1(E)$ analogously defined with respect to the minimal modular parametrisation $\varphi_1\colon X_1(N)\to E$ one has the following related conjecture (see Conjecture~I in~\cite{stevens89}):
\begin{ms_con}\label{ms_con}
  For any elliptic curve $E/\QQ$, the Manin constant $c_1(E)$ is $1$.
\end{ms_con}

Our main result is the following theorem.

\begin{thm}\label{main_thm}
  Let $E$ be an elliptic curve defined over $\QQ$.
  \begin{enumerate}[label=\alph*)] 
    \item Assume the conjecture that $c_1(E)=1$ holds.
    Then $\LL(E,\chi)\in \ZZ[\zeta_d]$ for all non-trivial primitive Dirichlet characters~$\chi$ of order~$d$ whose conductor $m$ is not divisible by a prime of bad reduction for $E$.
    \item Suppose the Manin constant $c_0(E)$ is $1$.
    Then $\LL(E,\chi)\in \ZZ[\zeta_d]$ for all non-trivial primitive Dirichlet characters~$\chi$ of order~$d$ whose conductor $m$ is not divisible by a prime of additive reduction for $E$.
  \end{enumerate}
\end{thm}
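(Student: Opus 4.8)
The plan is to express $\LL(E,\chi)$ as a character-weighted sum of modular symbols and then to extract integrality from the vanishing of $\sum_{a}\bar\chi(a)$ for a non-trivial~$\chi$. Writing $\lambda(r)=2\pi i\int_{i\infty}^{r} f(\tau)\,d\tau=[r]^{+}\,\Omega_{+}(E)+[r]^{-}\,\Omega_{-}(E)$ with $[r]^{\pm}\in\RR$, the computation of Section~\ref{lfn_sec} gives $G(\bar\chi)\,\La(E,\chi,1)=\sum_{a\bmod m}\bar\chi(a)\,\lambda(a/m)$; since $\bar\chi(-1)=\epsilon$, pairing $a$ with $-a$ collapses this onto the $\Omega_{\epsilon}$-line and yields
\[
  \LL(E,\chi)=\epsilon\sum_{a\bmod m}\bar\chi(a)\,[a/m]^{\epsilon}.
\]
Before using this I must check that it computes the \emph{motivic} value: the only primes at which $L(E,\chi,s)$ and the naive twist $\La(E,\chi,s)$ can have different Euler factors are the additive primes $p\mid m$ at which a ramified $\chi_p$ restores good or multiplicative reduction for $E\otimes\chi$. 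Excluding additive primes from $m$ forces the two local factors to agree at every prime (at a multiplicative $p\mid m$ both are trivial), so the displayed formula evaluates $L(E,\chi,1)$.

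By the theorem of Manin and Drinfeld each $[r]^{\epsilon}$ is rational, and the crucial point is that its denominator is governed by the image of the cusp $r$ under the modular parametrisation: if two cusps $r,r'$ have the same image in $E$, then $\lambda(r)-\lambda(r')=\tfrac1c\int_{\varphi(r')}^{\varphi(r)}\omega\in\tfrac1c\Lambda_E$, where $\Lambda_E$ is the Néron lattice and $c=c_1$ for $\varphi_1\colon X_1(N)\to E$ (resp. $c=c_0$ for $\varphi_0$). For part~a) this settles matters quickly. When $m$ is coprime to $N$ every cusp $a/m$ with $\gcd(a,m)=1$ is the unique $\Gamma_0(N)$-cusp of denominator prime to $N$, namely $0$, so all of them lie over the single cusp $0$ of $X_0(N)$; as $f$ has trivial nebentypus the diamond operators act trivially on $E$, and hence all cusps of $X_1(N)$ above $0$ share one image $P_0\in E(\bar\QQ)$. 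With $c_1=1$ this gives $[a/m]^{\epsilon}\equiv[0]^{\epsilon}\pmod{\ZZ}$ for every $a$, whence
\[
  \sum_{a\bmod m}\bar\chi(a)\,[a/m]^{\epsilon}\equiv[0]^{\epsilon}\sum_{a\bmod m}\bar\chi(a)=0
\]
in $\ZZ[\zeta_d]$, proving $\LL(E,\chi)\in\ZZ[\zeta_d]$.

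For part~b) the same argument works verbatim when $m$ is coprime to $N$, now using $\varphi_0$ and $c_0=1$, since then the cusps $a/m$ are literally the cusp~$0$ of $X_0(N)$. The substantial case, and the main obstacle, is when $m$ is divisible by primes of multiplicative reduction: there the cusps $a/m$ split into several $\Gamma_0(N)$-orbits according to the $\gcd$-data at the primes dividing $\gcd(m,N)$, and they no longer share a single image in $E$, so the one-step cancellation above breaks down. I would handle this by factoring $\chi$ and $m$ into their local components, reorganising $\sum_a\bar\chi(a)[a/m]^{\epsilon}$ as a product of local contributions indexed by these orbits, and showing that at each multiplicative prime dividing $m$ the corresponding partial character sum is an algebraic integer or vanishes. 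Carrying out this bookkeeping of cusp widths at the bad primes, and reconciling it with the Euler factors there so that $c_0=1$ still delivers integrality after the partial summations, is the delicate technical heart of the argument.
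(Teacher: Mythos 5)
Your overall route---Birch's formula, a cusp equivalence reducing all $\lambda(a/m)$ to a common base value, and the vanishing of $\sum_a\bar\chi(a)$---is the same skeleton as the paper's Propositions~\ref{mneqdelta_prop} and~\ref{x1_prop}, and your comparison of the motivic and automorphic $L$-functions matches the lemma in Section~\ref{lfn_sec}. But there is a genuine gap at the decisive step. From $\lambda(a/m)-\lambda(r_0)\in\Lambda$ (with $c_1=1$, resp.\ $c_0=1$) you conclude $[a/m]^{\epsilon}\equiv[r_0]^{\epsilon}\pmod{\ZZ}$. That is false in general: for \emph{both} shapes of the N\'eron lattice one only has $\bigl\{\re(z)\bigm\vert z\in\Lambda\bigr\}=\tfrac{1}{2}\Omega_{+}(E)\,\ZZ$ and $\bigl\{\im(z)\,i\bigm\vert z\in\Lambda\bigr\}\subseteq\tfrac{1}{2}\Omega_{-}(E)\,\ZZ$, so your congruence holds only modulo $\tfrac{1}{2}\ZZ$ and the displayed computation proves merely $\LL(E,\chi)\in\tfrac{1}{2}\ZZ[\zeta_d]$. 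Removing this denominator $2$ is precisely what the paper calls its second step: after subtracting the base term one pairs $a$ with $-a$ and uses $\mu(-r)=\overline{\mu(r)}$, so that the summand becomes $\mu+\epsilon\,\overline{\mu}$, i.e.\ $2\re(\mu)$ or $2i\im(\mu)$, which does lie in $\Omega_{\epsilon}(E)\,\ZZ$ (after a case analysis on $c_\infty$); and when $\delta=2$ (resp.\ $D=2$ on $X_1$) even this fails and one needs the Hecke relation $\lambda\bigl(\tfrac{r}{2}\bigr)+\lambda\bigl(\tfrac{r+1}{2}\bigr)=0$ together with $\chi\bigl(1+\tfrac{m}{2}\bigr)=-1$. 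In your coprime-conductor setting the repair is available (take $r_0=0$, note $[0]^{-}=0$ because $\lambda(0)=L(E,1)\in\RR$, and pair $\pm a$ to get $\kappa(a)+\epsilon\,\kappa(-a)=2\kappa(a)\in\ZZ$), but as written the conclusion ``$\equiv 0$ in $\ZZ[\zeta_d]$'' does not follow. Note your pairing of $a$ with $-a$ was used only to derive the formula $\LL(E,\chi)=\epsilon\sum_a\bar\chi(a)[a/m]^{\epsilon}$, not to kill the $2$.

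Second, part~b) is not actually proved in its main case. For $m$ divisible by multiplicative primes you only offer a plan, and the plan rests on a miscount of orbits: since every $a$ in the sum is coprime to $m$, the cusps $a/m$ do \emph{not} split into several $\Gamma_0(N)$-classes. By Manin's Proposition~2.2 (as used in Lemma~\ref{muint_lem}) the class of $a/m$ depends only on $a\tilde{m}$ modulo $\delta=\gcd(D,N/D)$ with $D=\gcd(m,N)$; and if no additive prime divides $m$ then $\delta=1$, because any prime $p\mid\delta$ satisfies $p^2\mid N$, hence is additive, yet divides $m$. So \emph{all} cusps $a/m$ lie in a single $\Gamma_0(N)$-class of denominator $D$, the argument of part~a) goes through verbatim with that one cusp as base point, and no factorisation into local contributions or reconciliation with Euler factors is needed---the only residual difficulty is again the factor $2$ above, which is thus the part your proposal defers in both cases. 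A smaller point: in part~a) the diamond-operator justification is insufficient as stated, since trivial nebentypus only gives $\varphi_1\circ\langle d\rangle=\varphi_1$ up to translation by a torsion point of $E$; the fact one actually needs (cited in the paper from Diamond--Shurman, Proposition~3.8.3) is that $\tfrac{a}{m}$ and $\tfrac{b}{m}$ are $\Gamma_1(N)$-equivalent if and only if $a\equiv b\pmod{D}$, so that with $D=1$ they are literally the same cusp of $X_1(N)$ and share an image in $E$ for trivial reasons.
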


Theorem~\ref{semistable_thm} follows from Theorem~\ref{main_thm} because $c_0(E)=1$ is known for the $X_0$-optimal curve by~\cite{ces} if $E$ is semistable.

One can further relax the restriction on the conductor $m$.
We will prove in Theorem~\ref{la_thm} the integrality in more generality but for the corresponding value $\LLa(E,\chi)$ of the $L$-function $\La(E,\chi,s)$ instead.
We achieve this by studying the integrality property of modular symbols, similarly to the questions discussed in~\cite{wuthrich_integral}.
The closely related question of the integrality of the Stickelberger elements is discussed by Stevens in~\cite{stevens}; in Section~\ref{gal_sec} we will refine his methods.

To conclude the same for $\LL(E,\chi)$ one still needs to check that no prime of additive reduction becomes semistable over the number field fixed by the kernel of $\chi$ as explained in Section~\ref{lfn_sec}.
This is also the reason for the limitation to the following corollary.

\begin{cor}\label{main_cor}
  Let $E$ be an elliptic curve defined over $\QQ$.
  Assume the conjecture that $c_1(E)=1$ holds and that $E$ admits a semistable quadratic twist over $\QQ$.
  Then there are only finitely many $\chi$ such that $\LL(E,\chi)$ is non-integral.
\end{cor}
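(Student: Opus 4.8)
The plan is to harvest Theorem~\ref{main_thm}(a), which already disposes of every $\chi$ whose conductor $m$ is coprime to the conductor $N$ of $E$, and then to show that the remaining characters---those ramified at one of the finitely many primes of bad reduction---can produce a non-integral value only in boundedly many ways. So the first step is a reduction: since $\LL(E,\chi)\in\ZZ[\zeta_d]$ whenever $\gcd(m,N)=1$, any non-integral $\chi$ must satisfy $p\mid m$ for some $p\mid N$, and it suffices to bound, for each such $p$, the non-integral characters ramified at $p$.

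The second step is to peel off the naive value $\LLa(E,\chi)$ from $\LL(E,\chi)$. I would invoke Theorem~\ref{la_thm} to guarantee that $\LLa(E,\chi)$ is integral under the standing hypothesis $c_1(E)=1$, so that the only possible source of a denominator in $\LL(E,\chi)$ is the ratio $L(E,\chi,1)/\La(E,\chi,1)$, a \emph{finite} product of local Euler-factor quotients over the primes dividing $m$. At a prime of multiplicative reduction, or at an additive prime whose reduction stays additive over $K_\chi$, this quotient is $1$ and nothing is lost. The one dangerous case is an additive prime $p$ that becomes semistable over $K_\chi$: there the local component $\chi_p$ must equal the fixed ramified quadratic character $\mu_p$ over which $E$ acquires multiplicative reduction at $p$, and a non-trivial factor $(1-a'_p\,p^{-s})^{-1}$ reappears at $s=1$.

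To handle that case I would twist it away. Writing $E'$ for the quadratic twist of $E$ by $\mu_p$, one has $E\otimes\chi\cong E'\otimes\chi'$ with $\chi'=\chi\cdot\mu_p$ now \emph{unramified} at $p$, whence $L(E,\chi,s)=L(E',\chi',s)$ and $\LL(E,\chi)=c\cdot\LL(E',\chi')$ for a comparison constant $c$ built from the period ratio $\Omega_\epsilon(E')/\Omega_\epsilon(E)$ and the Gauss sum $G(\mu_p)$. The key point is that, up to roots of unity, $c$ depends only on $\mu_p$ and not on the prime-to-$p$ part of $\chi$, and that the factor $\sqrt{\pm p}$ hidden in $G(\mu_p)$ is cancelled by the corresponding factor in the twisted period, so that $c$ is integral away from a fixed finite set of rational primes. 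Iterating over the finitely many additive primes dividing $m$ replaces $E$ by one of finitely many quadratic twists $E^\ast$ and $\chi$ by a character $\chi^\ast$ whose conductor avoids the additive primes of $E^\ast$; applying Theorem~\ref{main_thm} (resp.\ Theorem~\ref{la_thm}) to $E^\ast$ then yields integrality of $\LL(E^\ast,\chi^\ast)$, and hence of $\LL(E,\chi)$, for all but boundedly many $\chi$.

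The main obstacle is precisely the bookkeeping at the additive primes. Three points need care: making the comparison constant $c$ genuinely integral, i.e.\ verifying that the archimedean period ratio of the quadratic twist exactly absorbs the Gauss sum so that only a fixed finite set of primes can survive in its denominator; checking that the Manin-constant hypothesis $c_1(E)=1$ (or the weaker input actually used in the proof of Theorem~\ref{main_thm}) transfers to, or can be circumvented for, the finitely many twists $E^\ast$; and confirming that an additive prime which remains additive over $K_\chi$ contributes no denominator beyond what Theorem~\ref{la_thm} already controls. Once these are settled, the finitely many bad primes, the fixed quadratic twisting characters $\mu_p$, and the fixed finite set of primes dividing the various constants $c$ together concentrate every denominator at finitely many rational primes and confine each non-integral $\chi$ to a finite list, which is the assertion of the corollary.
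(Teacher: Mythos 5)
You have found a genuine difficulty, but your mechanism does not resolve it, and it also departs from the paper's much shorter route. The paper obtains the corollary essentially from Theorem~\ref{la_thm} (via Proposition~\ref{x1_prop}): assuming $c_1(E)=1$, the value $\LLa(E,\chi)$ can fail to be integral only when $m\mid N$, and there are only finitely many characters of conductor dividing $N$; what remains is the discrepancy between $\LL$ and $\LLa$, governed by the correction factors $\mathfrak{C}(E,\chi,p)$ of Section~\ref{lfn_sec}. Your first concrete error is the description of the dangerous case: ``becomes semistable over $K_\chi$'' includes acquiring \emph{good} reduction, which occurs for local components $\chi_p$ of order $3$, $4$ or $6$, not only for the ramified quadratic $\mu_p$ attached to potentially multiplicative reduction. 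The paper's own examples are of this kind: 150a1 becomes good over $\QQ(\zeta_5)$ via a quartic character, and 162b1 becomes good over $\QQ(\zeta_9)^+$ via a cubic character of conductor $9$. In those cases your key identity $E\otimes\chi\cong E'\otimes\chi'$ with $E'$ an elliptic curve is unavailable: twisting the weight-two newform by a cubic or quartic character produces a form with non-trivial nebentypus, not an elliptic curve over $\QQ$. So your twisting step covers at best the potentially multiplicative primes.

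The second, more serious gap is the endgame, which is a non sequitur even where the quadratic twist exists. From $\LL(E,\chi)=c\cdot\LL(E^\ast,\chi^\ast)$ with $\LL(E^\ast,\chi^\ast)\in\ZZ[\zeta_d]$ and $c$ a fixed constant whose denominator is supported at a fixed finite set of primes, you may conclude that the denominators of $\LL(E,\chi)$ are uniformly bounded --- not that only finitely many $\chi$ give non-integral values. And $c$ (equivalently $\mathfrak{C}(E,\chi,p)$) is genuinely non-integral in general: the paper's examples show corrections such as the local factor $(1+(2-i)T)^{-1}$ at $T=\tfrac15$ for 150a1, which adds a new factor $2$ to the denominator, and the factor carrying $\LLa=2$ to $\LL=\tfrac32$ for 99b1; the table contains many entries with $\LLa$ integral but $\LL$ not. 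Note also that fixing the $p$-component of $\chi$ leaves its prime-to-$p$ part completely free, so infinitely many $\chi$ carry a non-trivial correction factor; hence finiteness for $\LL$ requires showing that for all but finitely many such $\chi$ the denominator of the correction divides the integral value $\LLa(E,\chi)$. Nothing in your sketch supplies such a divisibility statement, and neither Theorem~\ref{main_thm} nor Theorem~\ref{la_thm} does, since they prove integrality but no divisibility. Until that step is filled in, your argument establishes only boundedness of denominators, which is strictly weaker than the corollary.
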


Throughout we identify primitive Dirichlet character via class field theory with characters of the absolute Galois group of $\QQ$.
Let $K$ be the field fixed by the kernel of such a character $\chi$; it is an abelian extension of $\QQ$.
By the Artin formalism, which holds for the motivic $L$-functions but not for $\La(E,\chi,s)$, we have that
$ L(E/K,s) = \prod_{\chi} L(E,\chi,s)$
where the product runs over all characters of the Galois group of $K/\QQ$.
The Birch and Swinnerton-Dyer conjecture predicts now that there is an integer $c\in\ZZ$ such that
\[
  \prod_{\chi} \LL(E,\chi) = \frac{L(E/K,1)\cdot \sqrt{\lvert \Delta_K\rvert}}{\Omega(E/K)} \overset{\mathrm{?}}{=} \frac{c}{\lvert E(K)_{\mathrm{tors}}\rvert ^2}
\]
where $\Delta_K$ is the discriminant of $K$ and $\Omega(E/K)$ is a product of periods $\Omega_{\pm}(E)$.
Of course, the integer $c$ could be zero in which case one of the $\LL(E,\chi)$ is zero and the following does not say much.
However, when $c\neq 0$, the conjecture now predicts that the possible denominator of $\LL(E,\chi)$ should have something to do with torsion points in $E(K)$.
In Corollary~\ref{lachi_tors_cor} and in Theorem~\ref{main_thm}, we prove indeed a strong link between the existence of new torsion points defined over $K$ and non-integral $\LLa(E,\chi)$.
In the opposite direction, the very last example in Section~\ref{ex_sec} shows that one can have a non-integral motivic $L$-value, yet no new torsion points appearing in the corresponding field.
On the arithmetic side, the right hand side of the above conjectured equality, the explanation for integrality comes from the cancellation between $c$ and the order of the torsion subgroup; this was studied for instance in~\cite{lorenzini}.

\subsubsection*{Overview}
The setup of the paper is as follows.
In Section~\ref{geom_sec} we prove some first integrality results using Birch's formula and the geometry of modular symbols.
In Section~\ref{gal_sec} we obtain further integrality results but now using the Galois action on cusps in $X_1(N)$.
In Section~\ref{tors_sec}, we briefly depart from the modular symbols and record some results about acquiring torsion points in abelian extensions of $\QQ$.
We use these and other results from previous sections to prove some results about integrality of modular symbols in Section~\ref{modsym_sec}.
In Section~\ref{mainthm_sec} we prove one of our main integrality results.
In the penultimate section, Section~\ref{lfn_sec}, we compare the motivic definition of the $L$-function to the automorphic definition.
Finally in Section~\ref{ex_sec} we include some detailed examples to demonstrate why the assumptions in our main theorems can not be weakened.
We finish with a table containing all examples of non-integral $L$-values for elliptic curves with conductor below 100.

\subsubsection*{Acknowledgements}
The authors would like to thank Vladimir Dokchitser and Robert Evans.
When this work was carried out the first named author was supported by the Engineering and Physical Sciences Research Council, through the EPSRC Centre for Doctoral Training in Geometry and Number Theory~[EP/L015234/1] (the London School of Geometry and Number Theory) at University College London.

\section{Integrality using the geometry of modular symbols}
\label{geom_sec}

We aim to show that the algebraic $L$-value for the $L$-function $\La(E,\chi,s)$ is integral in two steps.
First, we will write it as a sum involving only elements in the Néron lattice $\Lambda$.
The second step takes care of the possible denominator $2$ by splitting the sum into two equal parts.

Let $E/\QQ$ be an elliptic curve of conductor $N$.
Let $f$ be the newform of weight $2$ associated to the isogeny class of $E$.
For $r\in\QQ$, define
\begin{equation}\label{modsym_eq}
  \lambda(r) = 2\pi i \int_{i\infty}^{r} f(\tau) d\tau = \frac{1}{c_0}\cdot \int_{\gamma(r)} \omega
\end{equation}
where the first integral follows the vertical line in the upper half plane from $i\infty$ to $r\in\QQ$ and $\gamma(r)$ is the image in $E(\CC)$ of this path under $\varphi_0$.
Let $\Lambda$ be the Néron lattice of $E$, i.e., the set of all values of $\int_\gamma \omega$ as $\gamma$ runs through closed loops in $E(\CC)$.
This can have two possible shapes.
If $E(\RR)$ has $c_{\infty} =2$ connected components (which is illustrated in the picture on the left in Figure~\ref{la_fig} below), then $\Lambda = \tfrac{1}{2}\Omega_+(E) \,\ZZ \oplus \Omega_{-}(E)\,\ZZ$.
Instead if $c_{\infty}=1$ as in the picture on the right in Figure~\ref{la_fig}, then $\Lambda$ is spanned by $\Omega_{+}(E)$ and $\tfrac{1}{2}(\Omega_{+}(E) + \Omega_{-}(E))$.
Note the periods in~\cite{wuthrich_integral, sagemath} are differently normalised: there $\Omega_{+}(E)/c_{\infty}$ is used instead of $\Omega_+(E)$.

\begin{figure}[ht]
  \begin{center}
  \includegraphics[height=5cm]{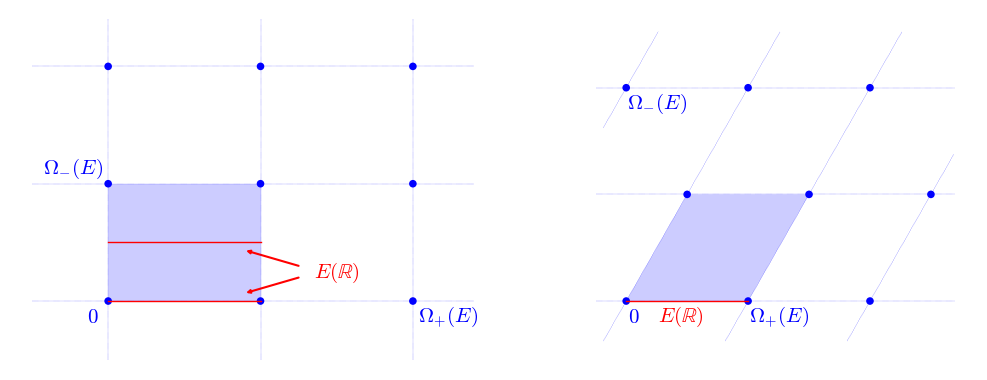}
  \caption{The two types of Néron lattice}\label{la_fig}
  \end{center}
\end{figure}
We note that in both cases we have the following
\begin{equation} \label{reallat_eq}
\bigl\{\re(z)\bigm\vert z\in\Lambda\bigr\} = \tfrac{1}{2}\Omega_{+}(E)\,\ZZ
\end{equation}
and
\begin{equation} \label{imlat_eq}
\bigl\{\im(z)\,i\bigm\vert z\in\Lambda\bigr\} = \tfrac{1}{2}\,c_{\infty}\,\Omega_{-}(E)\,\ZZ\subset \tfrac{1}{2}\Omega_{-}(E)\,\ZZ,
\end{equation}
which we will use frequently to prove our results.

We fix a non-trivial primitive character $\chi$ of conductor $m$ and order $d$.
Set $D = \gcd(m,N)$ and $\delta = \gcd(D,N/D)$.
The primes dividing $D$ are precisely those that are bad for both $E$ and $\chi$, while at those dividing $\delta$ the curve $E$ must have additive reduction.
Write $m = D\cdot \tilde{m}$ and note that $\tilde{m}$ is coprime to $\delta$.

Assume first that $\delta\neq 2$.
For any invertible $a$ modulo $m$, we define $\alpha(a,m)$ to be the least residue of $a\,\tilde{m}$ modulo $\delta$; by definition this means that $-\delta/2 < \alpha(a,m) < \delta/2$ and $\alpha(a,m) \equiv a\,\tilde{m}\pmod{\delta}$.
Note that $a\tilde{m} \not \equiv \delta/2\pmod{\delta}$ unless $\delta=2$ which is why we will treat this case separately.
We define
\[
  \mu\Bigl(\frac{a}{m}\Bigl) = \lambda\Bigl(\frac{a}{m}\Bigr) - \lambda\Bigl(\frac{\alpha(a,m)}{D}\Bigr)
  = 2\pi i \int_{\alpha(a,m)/D}^{a/m} f(\tau)d\tau.
\]
If $\delta=2$, we set simply set $\mu\bigl(\tfrac{a}{m}\bigr)=\lambda\bigl(\tfrac{a}{m}\bigr)$.

We note here that if $m$ is even, which is necessarily the case when $\delta=2$, $m$ must be divisible by 4 since we are interested in non-trivial primitive characters only.

\begin{lem}\label{muint_lem}
  If $\delta\neq 2$, then for all $r$, we have $\mu(r)\in c_0^{-1} \,\Lambda$.
  If $\delta = 2$, then $\mu(r)\in (2c_0)^{-1}\,\Lambda$ for all $r$.
\end{lem}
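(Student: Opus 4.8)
The plan is to prove that $\mu(a/m)$ is a \emph{period} of $f$, i.e.\ that $c_0\,\mu(a/m)=\int_{\ell}\omega$ for a closed loop $\ell$ in $E(\CC)$, which by the very definition of $\Lambda$ forces $\mu(a/m)\in c_0^{-1}\Lambda$. The key input is that every period of $2\pi i f\,d\tau$ already lies in $c_0^{-1}\Lambda$: if $\ell$ is a loop on $X_0(N)$ then $\int_{\ell}2\pi i f\,d\tau=\tfrac1{c_0}\int_{\ell}\varphi_0^*\omega=\tfrac1{c_0}\int_{\varphi_0\circ\ell}\omega\in c_0^{-1}\Lambda$, since $\varphi_0\circ\ell$ is a loop in $E(\CC)$. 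Because $f$ is $\Gamma_0(N)$-invariant, for any $g\in\Gamma_0(N)$ the path from $r$ to $g\cdot r$ closes up on $X_0(N)$, so $\lambda(g\cdot r)-\lambda(r)=2\pi i\int_r^{g\cdot r}f\,d\tau$ is such a period. Hence, for $\delta\neq2$, it suffices to produce $g\in\Gamma_0(N)$ with $g\cdot\tfrac{\alpha(a,m)}{D}=\tfrac am$, for then $\mu(a/m)=\lambda(\tfrac am)-\lambda(\tfrac{\alpha}{D})$ is exactly such a difference.

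To find $g$ I would first recall the classification of cusps of $X_0(N)$: the $\Gamma_0(N)$-orbit of a cusp in lowest terms $x/y$ depends only on $\gcd(y,N)$ together with the class of $x$ modulo $\gcd\bigl(\gcd(y,N),N/\gcd(y,N)\bigr)$. For $\tfrac am=\tfrac a{D\tilde m}$ these data are $D$ and $\delta$. Reducing this cusp to one with denominator exactly $D$ amounts to solving a linear system in which $\tilde m$ must be inverted modulo $N/D$ --- this is where the coprimality of $\tilde m$ with $N/D$ enters --- and it produces a numerator congruent to $a\tilde m$ modulo $\delta$. This is precisely the congruence defining $\alpha(a,m)$, so $\tfrac{\alpha}{D}$ and $\tfrac am$ lie in the same orbit. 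Concretely I would write $g=\sm pqrs\in\SL_2(\ZZ)$ with $p\alpha+qD=a$ and $r\alpha+sD=m$, and then check that the congruence defining $\alpha$ is exactly what allows the remaining entries to be chosen with $ps-qr=1$ and $N\mid r$, so that $g\in\Gamma_0(N)$ and $g\cdot\tfrac{\alpha}{D}=\tfrac am$.

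The main obstacle is this explicit construction: one must arrange simultaneously the cusp identity, the unimodularity $ps-qr=1$, and the divisibility $N\mid r$, and verify that the defining congruence of $\alpha$ is both necessary and sufficient for integral solvability. The hypothesis $\delta\neq2$ is used here to guarantee that $\alpha$ is the \emph{unique} least residue of $a\tilde m$ in $(-\delta/2,\delta/2)$, with no ambiguity at the endpoint $\delta/2$; this unambiguous choice is what makes $\tfrac{\alpha}{D}$ a well-defined cusp equivalent to $\tfrac am$.

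For $\delta=2$ this choice breaks down, since $a\tilde m$ may be $\equiv1\equiv\delta/2\pmod2$ and the would-be representative lands at the ambiguous cusp of denominator $2$; no single $g\in\Gamma_0(N)$ then identifies $\tfrac am$ with a cusp of denominator $D$. Here I would instead argue that $\tfrac am$ becomes equivalent only after doubling, equivalently that $\varphi_0(a/m)$ is a $2$-torsion point of $E$, so that $2\,\mu(a/m)=2\lambda(a/m)$ is a period and one obtains only $\mu(a/m)\in(2c_0)^{-1}\Lambda$. This factor of $2$ is exactly the ``denominator $2$'' that the surrounding discussion isolates and disposes of in a separate second step.
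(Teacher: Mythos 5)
Your treatment of the case $\delta\neq 2$ is sound and is essentially the paper's argument: the paper simply cites Manin's Proposition~2.2 for the $\Gamma_0(N)$-equivalence of $\tfrac{a}{m}$ and $\tfrac{\alpha(a,m)}{D}$, and then observes that the path between equivalent cusps closes up on $X_0(N)$ and hence maps to a loop in $E(\CC)$, giving $\mu(r)\in c_0^{-1}\Lambda$. You re-derive the equivalence by an explicit matrix $g=\sm{p}{q}{r}{s}$, and your congruence bookkeeping is correct: $N\mid r$ forces $ps\equiv 1\pmod{\delta}$, the condition $r\alpha+sD=m$ forces $s\equiv\tilde m\pmod{N/D}$, hence $p\equiv\tilde m^{-1}\pmod{\delta}$ and $\alpha\equiv a\tilde m\pmod{\delta}$, which is exactly the defining congruence for $\alpha(a,m)$. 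This is more laborious than invoking the classification of cusps but completable, and you correctly locate where $\delta\neq 2$ is used (no odd residue exists in $(-1,1)$ when $\delta=2$, so $\alpha$ is undefined there).

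The genuine gap is in the case $\delta=2$. You assert that $\varphi_0\bigl(\tfrac{a}{m}\bigr)$ is a $2$-torsion point because ``$\tfrac{a}{m}$ becomes equivalent only after doubling'' --- but there is no notion of doubling a cusp, and cusp-equivalence geometry alone can never bound the order of the image of a cusp: by Manin--Drinfeld the class of $\bigl(\tfrac{a}{m}\bigr)-(\infty)$ is torsion, but its image in $E$ can have any order (on $X_0(11)$ the cusp $0$ maps to a point of order $5$, which is why $\bigl[\tfrac13\bigr]^+=-\tfrac{3}{10}$ there). So the $2$-torsion claim is precisely what needs proving, and your sketch supplies no mechanism for it. The paper's input at this point is Hecke-theoretic, not geometric: $\delta=2$ forces $4\mid N$, hence $a_2=0$ and the second Hecke operator annihilates $f$, which gives the modular-symbol relation $\lambda\bigl(\tfrac{r}{2}\bigr)+\lambda\bigl(\tfrac{r+1}{2}\bigr)=0$; taking $r=\tfrac{2a}{m}$ yields $2\lambda\bigl(\tfrac{a}{m}\bigr)=\lambda\bigl(\tfrac{a}{m}\bigr)-\lambda\bigl(\tfrac{a}{m}+\tfrac12\bigr)$, and since $\tfrac{a}{m}$ and $\tfrac{a}{m}+\tfrac12=\tfrac{a+m/2}{m}$ are $\Gamma_0(N)$-equivalent (both numerators are odd, the denominator is $m$, and $\delta=2$), this difference is a genuine period in $c_0^{-1}\Lambda$. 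Without the vanishing $a_2=0$ your argument only controls differences $\lambda\bigl(\tfrac{a}{m}\bigr)-\lambda\bigl(\tfrac{a'}{m}\bigr)$ for equivalent cusps and says nothing about $2\lambda\bigl(\tfrac{a}{m}\bigr)$; this missing relation is the whole content of the $\delta=2$ half of the lemma.
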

\begin{proof}
First if $\delta\neq 2$.
By Proposition~2.2 in Manin~\cite{manin} any cusp $\tfrac{a}{m}$ is $\Gamma_0(N)$-equivalent to the cusp $\tfrac{\alpha(a,m)}{D}$.
Since these two cusps are equivalent, the path between them maps to a loop in $X_0(N)(\CC)$.
Therefore its image $\gamma$ in $E(\CC)$ will be closed as well.
Hence
\[
  \mu(r) = 2\pi i \int_{\alpha(a,m)/D}^{a/m} f(\tau)d\tau = \frac{1}{c_0} \int_\gamma \omega\ \in c_0^{-1} \,\Lambda
\]
for all $r\in \QQ$.

The case $\delta=2$ is different.
First both $N$ and $m$ are divisible by $4$.
It follows that the second Hecke operator annihilates the newform $f$.
For the modular symbols $\lambda$, this means that $\lambda\bigl(\tfrac{r}{2}\bigr) + \lambda\bigl(\tfrac{r+1}{2}\bigr)=0$  (see e.g. \cite[(4.2)]{mtt}).
Applied to $r=\tfrac{2a}{m}$, one finds the relation
\begin{equation}\label{t2_eq}
  \lambda\Bigl(\frac{a}{m}\Bigr) = -\lambda\Bigl(\frac{a}{m} + \frac{1}{2}\Bigr)\qquad\text{if $\delta=2$.}
\end{equation}
Since the cusps $\tfrac{a}{m}$ and $\tfrac{a}{m}+\tfrac{1}{2}$ are both $\Gamma_0(N)$-equivalent to $\tfrac{1}{D}$, the difference
\[
  2\,\mu\Bigl(\frac{a}{m}\Bigr) = 2\,\lambda\Bigr(\frac{a}{m}\Bigr) = \lambda\Bigl(\frac{a}{m}\Bigr) - \lambda\Bigl(\frac{a}{m}+\frac{1}{2}\Bigr)
\]
belongs to $c_0^{-1} \,\Lambda$.
\end{proof}

\begin{lem}\label{muneg_lem}
  For all $r\in\QQ$, we have $\mu(-r) = \overline{\mu(r)}$.
\end{lem}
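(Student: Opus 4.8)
The plan is to reduce the whole statement to a single conjugation identity for the modular symbol $\lambda$, namely $\overline{\lambda(r)} = \lambda(-r)$ for every $r\in\QQ$. To establish it I would integrate the $q$-expansion $f(\tau)=\sum_{n\geq1} a_n\, e^{2\pi i n\tau}$ term by term along the vertical path $\tau = r+it$, $t\colon\infty\to 0$, which collapses to the closed form
\[
  \lambda(r) = \sum_{n\geq 1} \frac{a_n}{n}\, e^{2\pi i n r}.
\]
Since $E$ is defined over $\QQ$, the Fourier coefficients $a_n$ are rational integers and in particular real, so complex conjugation merely sends $e^{2\pi i n r}$ to $e^{-2\pi i n r}=e^{2\pi i n(-r)}$, giving $\overline{\lambda(r)}=\lambda(-r)$ at once. (Alternatively one can argue straight from the integral, using $\overline{f(\tau)}=f(-\bar\tau)$ along the reflected path; both routes are routine.)

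With this identity in hand the lemma follows by applying it to each term defining $\mu$. If $\delta=2$ then $\mu(r)=\lambda(r)$ by definition, so $\mu(-r)=\lambda(-r)=\overline{\lambda(r)}=\overline{\mu(r)}$ immediately. If $\delta\neq2$, write $r=a/m$ with $a$ invertible modulo $m$. The one thing to verify first is the symmetry $\alpha(-a,m)=-\alpha(a,m)$: by construction $\alpha(a,m)$ is the unique representative of $a\tilde m$ in the open interval $(-\delta/2,\delta/2)$, and negating it produces a representative of $-a\tilde m$ that again lies in that open symmetric interval, hence equals $\alpha(-a,m)$. Granting this,
\[
  \mu\Bigl(\frac{-a}{m}\Bigr) = \lambda\Bigl(\frac{-a}{m}\Bigr) - \lambda\Bigl(\frac{-\alpha(a,m)}{D}\Bigr) = \overline{\lambda\Bigl(\frac{a}{m}\Bigr)} - \overline{\lambda\Bigl(\frac{\alpha(a,m)}{D}\Bigr)} = \overline{\mu\Bigl(\frac{a}{m}\Bigr)},
\]
which is exactly the claim.

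The only delicate point, and the reason the case $\delta=2$ is segregated, is precisely this symmetry $\alpha(-a,m)=-\alpha(a,m)$: it requires that $\alpha(a,m)$ never attain the boundary value $\delta/2$, equivalently that $a\tilde m\not\equiv\delta/2\pmod\delta$, which holds exactly when $\delta\neq2$ (as already recorded in the text just before the definition of $\mu$, using that $a\tilde m$ is a unit modulo $\delta$). Everything else is a mechanical application of the conjugation identity, so I anticipate no further obstacle.
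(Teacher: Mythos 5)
Your proof is correct and takes essentially the same route as the paper: everything reduces to the identity $\lambda(-r)=\overline{\lambda(r)}$ (which the paper establishes exactly by your stated alternative, the reflection $\tau\mapsto-\bar\tau$ together with the realness of the $a_n$), followed by the symmetry $\alpha(-a,m)=-\alpha(a,m)$ for $\delta\neq2$, with $\delta=2$ immediate since $\mu=\lambda$ there. One small caution on your primary route: the series $\sum_{n\geq1}\frac{a_n}{n}e^{2\pi i nr}$ is only conditionally convergent (the $a_n$ grow like $\sqrt{n}$), so the termwise integration near $t=0$ needs an Abel-limit justification --- e.g.\ conjugate the absolutely convergent truncation $2\pi\int_y^\infty f(r+it)\,dt$ for $y>0$ and let $y\to0^+$ --- whereas the reflected-path argument avoids this entirely.
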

\begin{proof}

  The equality $\lambda(-r) = \overline{\lambda(r)}$ can be verified through explicit computation using the action $\tau\mapsto -\bar\tau$ on the upper half plane and that the modular form $f$ has real coefficients.
  This proves already the case $\delta=2$.

  If $\delta \neq 2$, our choice of representative $\alpha(a,m)$ modulo $\delta$ implies that $\alpha(-a,m) = - \alpha(a,m)$.
  We obtain
  \begin{align*}
    \overline{\mu\Bigl(\frac{a}{m}\Bigr)} &= \overline{\lambda\Bigl(\frac{a}{m}\Bigr)} - \overline{\lambda\Bigl(\frac{\alpha(a,m)}{D}\Bigr)}
    = \lambda\Bigl(-\frac{a}{m}\Bigr) - \lambda\Bigl(-\frac{\alpha(a,m)}{D}\Bigr)\\
    &=  \lambda\Bigl(\frac{-a}{m}\Bigr) - \lambda\Bigl(\frac{\alpha(-a,m)}{D}\Bigr) = \mu\Bigl(\frac{-a}{m}\Bigr).\qedhere
  \end{align*}
\end{proof}

Write $L(E,s)= \sum_{n\geq 1} a_n \, n^{-s}$ for the Dirichlet series for the $L$-function of $E$, which converges absolutely for $\re(s) > \tfrac{3}{2}$.
We define $\La(E,\chi,s)$ as the analytic continuation of the Dirichlet series
\[
  \La(E,\chi,s)=\sum_{n\geq 1} \frac{a_n\, \chi(n)}{ n^{s} }.
\]
This is the $L$-function of the modular form $f$ twisted by $\chi$ as in~\cite{mtt}.
In Section~\ref{lfn_sec}, we will compare $\La(E,\chi,s)$ and $L(E,\chi,s)$.
We decorate the first with an ``a'' for ``automorphic'', while the latter is a motivic $L$-function.
\begin{lem}\label{musum_lem}
  Suppose $\delta\neq m$.
  Then
  \[
    \La(E,\bar\chi,1) =\frac{G(\bar\chi)}{m}\, \sum_{a} \chi(a) \,\mu\bigl(\tfrac{a}{m}\bigr)
  \]
  where the sum runs over all invertible $a$ modulo $m$.
\end{lem}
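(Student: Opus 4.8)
The plan is to express the twisted $L$-value as the value at the cusp $0$ of the modular symbol attached to the newform twisted by $\bar\chi$, and then to exchange the symbols $\lambda$ for the combinations $\mu$, the latter step being where the hypothesis $\delta\neq m$ enters.

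First I would invoke the classical twisting identity. As $\chi$ is primitive of conductor $m$, one has $\sum_{a\bmod m}\chi(a)e^{2\pi i an/m}=\bar\chi(n)G(\chi)$ for every $n$, and Fourier inversion on $\ZZ/m\ZZ$ recasts this as an expression of the twisted newform $f_{\bar\chi}(\tau)=\sum_{n\geq1}\bar\chi(n)a_n e^{2\pi i n\tau}$, whose Dirichlet series is $\La(E,\bar\chi,s)$, as an average of translates:
\[
  f_{\bar\chi}(\tau)=\frac{1}{G(\chi)}\sum_{a\bmod m}\chi(a)\,f\!\left(\tau+\tfrac{a}{m}\right).
\]
Since $f_{\bar\chi}$ is again a cusp form, the central value has the integral representation $\La(E,\bar\chi,1)=2\pi i\int_{i\infty}^{0}f_{\bar\chi}(\tau)\,d\tau$, the integral running down the imaginary axis where $f_{\bar\chi}$ decays. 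Substituting $\tau\mapsto\tau+\tfrac{a}{m}$ term by term moves the endpoint $0$ to $\tfrac{a}{m}$ and fixes $i\infty$, so that each term becomes $\lambda\bigl(\tfrac{a}{m}\bigr)$ and
\[
  \La(E,\bar\chi,1)=\frac{1}{G(\chi)}\sum_{a}\chi(a)\,\lambda\!\left(\tfrac{a}{m}\right),
\]
the sum effectively ranging over invertible $a$ only. Finally I would rewrite $1/G(\chi)$ as $G(\bar\chi)/m$ by means of $G(\chi)G(\bar\chi)=\chi(-1)m$, keeping careful track of the sign $\epsilon=\chi(-1)$ as in~\eqref{LL_eq}.

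The remaining and most delicate step is to replace $\lambda$ by $\mu$; equivalently, since $\lambda\bigl(\tfrac{a}{m}\bigr)-\mu\bigl(\tfrac{a}{m}\bigr)=\lambda\bigl(\tfrac{\alpha(a,m)}{D}\bigr)$ when $\delta\neq2$, to show that $\sum_{a}\chi(a)\,\lambda\bigl(\tfrac{\alpha(a,m)}{D}\bigr)=0$. (For $\delta=2$ there is nothing to prove as $\mu=\lambda$.) The key observation is that $\alpha(a,m)$, and hence the summand $\lambda\bigl(\tfrac{\alpha(a,m)}{D}\bigr)$, depends on $a$ only through its residue modulo $\delta$. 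Because $\delta\mid m$ and $\delta\neq m$, the modulus $\delta$ is a proper divisor of the conductor, so the restriction of $\chi$ to $H=\ker\bigl((\ZZ/m\ZZ)^{\times}\to(\ZZ/\delta\ZZ)^{\times}\bigr)$ is non-trivial; indeed $\chi|_H=1$ would force the conductor of $\chi$ to divide $\delta$. Grouping the invertible residues modulo $m$ into cosets of $H$, on each of which the summand is constant, the inner sum $\sum_{a\in a_1H}\chi(a)=\chi(a_1)\sum_{h\in H}\chi(h)$ vanishes by orthogonality, and the whole correction is $0$. This yields $\sum_a\chi(a)\mu\bigl(\tfrac{a}{m}\bigr)=\sum_a\chi(a)\lambda\bigl(\tfrac{a}{m}\bigr)$ and, combined with the previous paragraph, the asserted formula.

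I expect the main obstacle to be exactly this vanishing of the correction term: one must verify that $\alpha(a,m)/D$ really factors through $a\bmod\delta$ and that primitivity forces $\chi|_H\neq1$ precisely when $\delta<m$, the excluded case $\delta=m$ being where the argument breaks down. A secondary, more routine but error-prone, point is the Gauss-sum and sign bookkeeping in passing from $1/G(\chi)$ to $G(\bar\chi)/m$, along with justifying the interchange of the finite sum and the integral in the Mellin step.
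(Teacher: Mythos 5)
Your proposal follows essentially the same route as the paper: where the paper simply cites Birch's formula~\eqref{birch_eq} from (8.6) of~\cite{mtt}, you re-derive it by unfolding the twisted form, and the heart of the lemma --- the vanishing of the correction term $\sum_a \chi(a)\,\lambda\bigl(\tfrac{\alpha(a,m)}{D}\bigr)$ --- is argued identically: the summand factors through $a \bmod \delta$, the invertible residues are grouped into cosets of $H=\ker\bigl((\ZZ/m\ZZ)^\times\to(\ZZ/\delta\ZZ)^\times\bigr)$ on which the summand is constant, and primitivity forces $\chi\vert_H\neq 1$, so orthogonality kills each coset sum; the case $\delta=2$ is trivial in both treatments since $\mu=\lambda$ there. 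Two remarks. First, your coset formulation is marginally cleaner than the paper's explicit parametrisation $a=y(1+k\delta)$, and your observation that $\chi\vert_H=1$ would force the conductor of $\chi$ to divide $\delta$ uniformly disposes of the degenerate possibility that $H$ is trivial (namely $m=2\delta$ with $\delta$ odd), which the paper rules out by a separate remark about conductors. Second, the sign bookkeeping you defer is a genuine loose end rather than a routine check: the identity you quote, $G(\chi)G(\bar\chi)=\chi(-1)\,m$, gives $1/G(\chi)=\epsilon\,G(\bar\chi)/m$ and \emph{not} $G(\bar\chi)/m$, so the unfolding exactly as you set it up yields
\[
  \La(E,\bar\chi,1)=\epsilon\cdot\frac{G(\bar\chi)}{m}\sum_{a}\chi(a)\,\lambda\bigl(\tfrac{a}{m}\bigr),
\]
carrying an extra factor $\chi(-1)$ for odd characters relative to the statement of the lemma; this discrepancy has to be reconciled with the sign conventions of~\cite{mtt} (orientation of the path defining $\lambda$, normalisation of the Gauss sum) rather than merely flagged. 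Since $\epsilon$ is a unit of $\ZZ[\zeta_d]$, none of the integrality applications of the lemma are affected, but as a proof of the stated identity your write-up must settle this sign explicitly; everything else in your argument is correct and matches the paper's proof.
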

Note that the condition $m\neq \delta$ is satisfied as soon as $\chi$ is non-trivial and $(m,N)=1$ or, more generally, if no additive place ramifies in $K_{\chi}/\QQ$.
\begin{proof}
  We use Birch's formula (see formula~(8.6) in~\cite{mtt}):
  \begin{equation}\label{birch_eq}
    \La(E,\bar\chi,1) = \frac{G(\bar\chi)}{m} \sum_{a}\, \chi(a)\, \lambda\bigl(\tfrac{a}{m}\bigr)
  \end{equation}
  where the sum runs over $a\in\ZZ/m\ZZ$.
  Again, this proves already the case $\delta=2$.

  Suppose now that $\delta\neq 2$.
  The sum can be rewritten as
  \[
    \sum_{a} \chi(a) \lambda\bigl(\tfrac{a}{m}\bigr) = \sum_{a} \chi(a) \,\mu\bigl(\tfrac{a}{m} \bigr) + \sum_{a} \chi(a)\, \lambda\bigl(\tfrac{\alpha(a,m)}{D}\bigr)
  \]
  and we are left to show that the second summand on the right is equal to zero.
  This sum is equal to
  \begin{equation}\label{dsum_eq}
    \sum_{-\delta/2< x < \delta/2 } \lambda\bigl(\tfrac{x}{D}\bigr) \sum_{\substack{a \bmod m\\ a\tilde{m}\equiv x \pmod{\delta}}} \chi(a).
  \end{equation}
  For a fixed $x$, we wish to show that the last sum on the right is zero.
  As $\tilde{m}$ and $a$ are coprime to $\delta$ it is possible to pick an invertible $y$ modulo $m$ such that $a\tilde{m} \equiv y \pmod{\delta}$.
  Then every $a$ modulo $m$ such that $a\tilde{m} \equiv y\pmod{\delta}$ can be written uniquely as $a = y(1+k\delta)$ for one $0\leq k < m/\delta$.
  Therefore
  \[
    \sum_{\substack{a \bmod m\\ a\tilde{m}\equiv x \pmod{\delta}}} \chi(a) = \sum_{k=0}^{\frac{m}{\delta}-1} \chi\bigl(y(1+k\delta)\bigr)
    = \chi(y)\cdot \sum_{h \in H} \chi(h)
  \]
  where $H$ is the kernel of $(\ZZ/m\ZZ)^\times \to (\ZZ/\delta\ZZ)^\times$.
  Since $m\neq \delta$,  the kernel $H$ is non-trivial as it is impossible that $\delta$ is odd and $m = 2 \delta$ since $m$ is the conductor of a character.
  Now the above sum is $\sum_{h\in H} \chi\vert_{H}(h)$ and by character theory this is $0$ unless $\chi$ restricts to the trivial character on $H$.
  But the latter is impossible as $\chi$ is assumed to be primitive modulo $m$.
\end{proof}

We define
\[
  \LLa(E,\chi) = \frac{\La(E,\bar\chi,1)\cdot m }{G(\bar\chi)\cdot \Omega_{\epsilon}(E)}
\]
analogous to the definition in~\eqref{LL_eq}.

\begin{prop}\label{mneqdelta_prop}
  Assume that the Manin constant $c_0$ for $E$ is $1$ and suppose $m^2\nmid N$.
  Then $\LLa(E,\chi)\in \ZZ[\zeta_d]$.
\end{prop}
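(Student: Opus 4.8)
The plan is to combine the three preceding lemmas with the structure of the Néron lattice; the only genuine work is the elimination of a spurious factor of~$2$ (and, when $\delta=2$, of~$4$).

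First I would record that the hypothesis $m^2\nmid N$ is exactly the condition $\delta\neq m$ needed for Lemma~\ref{musum_lem}: since $\delta\mid D\mid m$ with $D=\gcd(m,N)$, one has $\delta=m$ only if $D=m$, i.e.\ $m\mid N$, and then $\delta=\gcd(m,N/m)=m$ forces $m^2\mid N$. Hence Lemma~\ref{musum_lem} applies, and substituting its formula into the definition of $\LLa(E,\chi)$ cancels the factor $G(\bar\chi)/m$ and leaves
\[
  \LLa(E,\chi)=\frac{1}{\Omega_{\epsilon}(E)}\sum_{a}\chi(a)\,\mu\bigl(\tfrac{a}{m}\bigr),
\]
the sum over invertible $a\bmod m$. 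Writing $S$ for this sum, the goal becomes $S/\Omega_{\epsilon}(E)\in\ZZ[\zeta_d]$. Because $c_0=1$, Lemma~\ref{muint_lem} gives $\mu(\tfrac{a}{m})\in\Lambda$ when $\delta\neq2$ and $\mu(\tfrac{a}{m})\in\tfrac12\Lambda$ when $\delta=2$; together with the two displayed projection formulas for $\Lambda$ this puts the real and imaginary parts of $\mu(\tfrac{a}{m})$ in $\tfrac12\Omega_+(E)\,\ZZ$ and $\tfrac12\Omega_-(E)\,\ZZ$ respectively (halved again if $\delta=2$). Using these bounds naively yields only $\LLa(E,\chi)\in\tfrac12\ZZ[\zeta_d]$, so the heart of the matter is to gain the missing factor of~$2$.

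The key device is the symmetry $\mu(-r)=\overline{\mu(r)}$ of Lemma~\ref{muneg_lem}. Re-indexing $a\mapsto-a$ and using $\chi(-1)=\epsilon$ gives $S=\epsilon\sum_a\chi(a)\,\overline{\mu(\tfrac{a}{m})}$; averaging with the original expression lets me write $S=\sum_a\chi(a)\,t_a$ with $t_a=\tfrac12\bigl(\mu(\tfrac{a}{m})+\epsilon\,\overline{\mu(\tfrac{a}{m})}\bigr)$, which is $\re\mu(\tfrac{a}{m})$ when $\epsilon=+1$ and $i\,\im\mu(\tfrac{a}{m})$ when $\epsilon=-1$; in either case $t_a\in\tfrac12\Omega_{\epsilon}(E)\,\ZZ$ (resp.\ $\tfrac14\Omega_{\epsilon}(E)\,\ZZ$ if $\delta=2$). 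A short check shows $t_{-a}=\epsilon\,t_a$, so that $\chi(a)\,t_a$ is \emph{invariant} under $a\mapsto-a$. As $m\geq3$ this involution of $(\ZZ/m\ZZ)^\times$ is fixed-point free, so $S$ equals twice the sum over a set of representatives of the pairs $\{a,-a\}$ — this is the promised ``splitting into two equal parts''. When $\delta\neq2$ the resulting factor of~$2$ cancels the $\tfrac12$ in $t_a\in\tfrac12\Omega_\epsilon(E)\,\ZZ$, giving $S\in\Omega_{\epsilon}(E)\,\ZZ[\zeta_d]$ and hence the claim.

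When $\delta=2$ one factor of~$2$ is not enough, and I would exploit the extra relation~\eqref{t2_eq}. Since $4\mid m$, the map $a\mapsto a+\tfrac m2$ is a fixed-point-free involution of $(\ZZ/m\ZZ)^\times$ with $\chi(a+\tfrac m2)=\eta\,\chi(a)$ where $\eta=\chi(1+\tfrac m2)\in\{\pm1\}$, while $\mu(\tfrac{a}{m}+\tfrac12)=-\mu(\tfrac{a}{m})$ gives $t_{a+m/2}=-t_a$, so that $\chi(a+\tfrac m2)\,t_{a+m/2}=-\eta\,\chi(a)\,t_a$. If $\eta=1$ the pairs cancel and $S=0$; if $\eta=-1$ the summand $\chi(a)\,t_a$ is invariant under both involutions, hence under the resulting group of order~$4$, so $S$ is four times a sum over orbit representatives and $4\cdot\tfrac14$ cancels the denominator. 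The only degeneration is $m=4$, where $-1\equiv1+\tfrac m2$ and the two involutions coincide; there $\chi$ is the odd character modulo~$4$ and a direct computation shows $S=2\,\mu\bigl(\tfrac14\bigr)\in\Lambda$ is purely imaginary, whence $S\in\Omega_-(E)\,\ZZ$ and again $\LLa(E,\chi)\in\ZZ$. I expect this last balancing act in the $\delta=2$ case — matching the $\tfrac14$ from $\mu\in\tfrac12\Lambda$ against the two available symmetries — to be the main obstacle, everything else being bookkeeping with the period lattice.
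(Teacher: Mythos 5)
Your proof is correct and follows essentially the same route as the paper's: Lemma~\ref{musum_lem}, the pairing $a\mapsto -a$ via Lemma~\ref{muneg_lem} to land in $\tfrac12\Omega_\epsilon(E)\,\ZZ$, and, for $\delta=2$, a second pairing built from the relation~\eqref{t2_eq} together with $\chi\bigl(1+\tfrac m2\bigr)=-1$ (your case $\eta=+1$ is in fact vacuous, since primitivity forces $\eta=-1$, and your Klein-four orbit count is a repackaging of the paper's explicit re-indexing $a\mapsto \tfrac m2-a$). One small bonus: you treat the degenerate case $m=4$ separately (where the two involutions coincide and $a=m/4$ is invertible), whereas the paper's second splitting into sums over $1\le a\le \tfrac m4-1$ silently uses $\chi(m/4)=0$, which only holds for $m>4$.
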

\begin{proof}
  Note first that the assumption $m^2\nmid N$ is equivalent to $m\neq \delta$.
  By the definition of $\LLa(E,\chi)$ and Lemma~\ref{musum_lem}, we have
  \[
    \LLa(E,\chi) = \frac{1}{\Omega_\epsilon(E)} \sum_{a\bmod m} \chi(a)\, \mu\bigl(\tfrac{a}{m}\bigr).
  \]
  If $m$ is even then $\chi(m/2)=0$ since $m$ cannot be equal to $2$.
  Therefore for all $m$ we may split the above sum into two sums as
  \begin{align*}
    \LLa(E,\chi) & = \frac{1}{\Omega_\epsilon(E)} \sum_{a=1}^{[\frac{m-1}{2}]} \chi(a)\, \mu\bigl(\tfrac{a}{m}\bigr) + \frac{1}{\Omega_\epsilon(E)} \sum_{a=1}^{[\frac{m-1}{2}]} \chi(-a)\, \mu\bigl(\tfrac{-a}{m}\bigr)\\
     & = \frac{1}{\Omega_\epsilon(E)} \sum_{a=1}^{[\frac{m-1}{2}]} \Bigl( \chi(a)\, \mu\bigl(\tfrac{a}{m}\bigr) + \epsilon\cdot \chi(a) \,\mu\bigl(-\tfrac{a}{m}\bigr)\Bigr).
  \end{align*}
  Using Lemma~\ref{muneg_lem}, one obtains
  \begin{equation}\label{m2_eq}
    \LLa(E,\chi) = \frac{1}{\Omega_\epsilon(E)} \sum_{a=1}^{[\frac{m-1}{2}]} \chi(a) \cdot \Bigl( \mu\bigl(\tfrac{a}{m}\bigr) + \epsilon \cdot \overline{\mu\bigl(\tfrac{a}{m}\bigr)}\Bigr).
  \end{equation}

  Assume now first that $\delta\neq 2$.
  If $\epsilon = +1$, then $\mu(r) + \epsilon \,\overline{\mu(r)}= 2\cdot \re\bigl(\mu(r)\bigr)$.
  By Lemma~\ref{muint_lem}, $\mu(r)\in \Lambda$.
  In either case, whether $\Lambda$ is rectangular or not, the set of $\re(z)$ for $z\in\Lambda$ is $\tfrac{1}{2}\Omega_{+}(E)\,\ZZ$.
  Therefore, if $\epsilon = +1$
  \[
    \frac{\mu\bigl(\tfrac{a}{m}\bigr) + \epsilon \,\overline{\mu\bigl(\tfrac{a}{m}\bigr)}}{\Omega_\epsilon(E)}
  \]
  belongs to $\ZZ$.
  If $\epsilon = -1$ the same argument also works since $\mu(r) - \overline{\mu(r)}= 2\cdot \im \bigl(\mu(r)\bigr)\, i \in \Omega_{-}(E)\,\ZZ$ for both forms of the lattice.
  Since $\chi$ takes values in $\ZZ[\zeta_d]$ when $\chi$ has order $d$, this proves that $\LLa(E,\chi)\in\ZZ[\zeta_d]$.

  The case $\delta=2$, requires more as $\mu(r)=\lambda(r)$ does not necessarily belong to $\Lambda$, but only to $\tfrac{1}{2}\Lambda$ as seen in Lemma~\ref{muint_lem}.
  We now split the sum in equation~\eqref{m2_eq} once more, using the fact that $m$ is divisible by $4$ in this case.
  \begin{multline*}
     \LLa(E,\chi) = \frac{1}{\Omega_\epsilon(E)} \sum_{a=1}^{\frac{m}{4}-1} \chi(a) \cdot \biggl( \lambda\Bigl(\frac{a}{m}\Bigr) +\epsilon \cdot\overline{\lambda\Bigl(\frac{a}{m}\Bigr)}\biggr) +  \\
     + \frac{1}{\Omega_\epsilon(E)} \sum_{a=1}^{\frac{m}{4}-1} \chi(\tfrac{m}{2}-a) \cdot \biggl( \lambda\Bigl(\frac{1}{2} -\frac{a}{m}\Bigr) + \epsilon \cdot\overline{ \lambda\Bigl(\frac{1}{2} - \frac{a}{m} \Bigr) }\biggr)
  \end{multline*}
  We concentrate on the second sum.
  First $\chi\bigl(\tfrac{m}{2} - a\bigr) = \chi(-1) \,\chi(a)\,\chi\bigl(1+\tfrac{m}{2}\bigr)$ (recall that $a$ must be odd and therefore $\tfrac{am}{2}$ is congruent to $\tfrac{m}{2}$ modulo $m$).
  Since $1+\tfrac{m}{2}$ is of order two and because $\chi$ has conductor $m$, we must have $\chi\bigl(1+\tfrac{m}{2}\bigr)= - 1$.
  Further we use~\eqref{t2_eq} and reach
  \begin{align*}
     \LLa(E,\chi) &= \frac{1}{\Omega_\epsilon(E)} \sum_{a=1}^{\frac{m}{4}-1} \chi(a) \cdot
        \biggl( \lambda\Bigl(\frac{a}{m}\Bigr) + \epsilon\cdot \overline{\lambda \Bigl(\frac{a}{m}\Bigr)} -\epsilon \cdot \biggl( \overline{-\lambda\Bigl(\frac{a}{m}\Bigr)} - \epsilon\cdot \lambda\Bigl(\frac{a}{m}\Bigr) \biggr)\biggr)\\
        & = \sum_{a=1}^{\frac{m}{4}-1} \chi(a) \cdot 2 \cdot \frac{\lambda\bigl(\tfrac{a}{m}\bigr) + \epsilon\cdot \overline{\lambda\bigl(\tfrac{a}{m}\bigr)}}{\Omega_{\epsilon}(E)}.
  \end{align*}
  With the extra factor of $2$ and knowing that $\lambda\bigl(\tfrac{a}{m}\bigr)\in\tfrac{1}{2}\Lambda$, we can conclude again.
\end{proof}

For curves with $c_0>1$, we can use the modular parametrisation by $X_1(N)$ instead.
The result will be a bit weaker but it should apply to all curves.
Recall that the Manin constant $c_1$ satisfies $\varphi_1^*(\omega) = c_1\cdot 2\pi i f(\tau) d\tau$, where $\varphi_1:X_1(N)\to E$ is the modular parametrisation of minimal degree, and that it is conjectured to be $1$.
\begin{prop}\label{x1_prop}
  Assume that $c_1=1$.
  Then $\LLa(E,\chi)\in\ZZ[\zeta_d]$ for all non-trivial primitive characters $\chi$ of conductor $m\nmid N$.
\end{prop}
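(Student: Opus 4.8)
The plan is to rerun the argument of Proposition~\ref{mneqdelta_prop} with the parametrisation $\varphi_0$ replaced by the minimal parametrisation $\varphi_1\colon X_1(N)\to E$, exploiting that $\Gamma_1(N)$ identifies cusps of a given denominator far more freely than $\Gamma_0(N)$ does. Since $\varphi_1^*(\omega)=c_1\cdot 2\pi i f(\tau)\,d\tau$ and we assume $c_1=1$, the modular symbol satisfies $\lambda(r)=\int_{\gamma_1(r)}\omega$, where $\gamma_1(r)$ is the image under $\varphi_1$ of the vertical path from $i\infty$ to $r$. In particular, whenever two cusps are $\Gamma_1(N)$-equivalent the path between them descends to a loop on $X_1(N)(\CC)$, hence to a closed loop in $E(\CC)$, so the difference of the two modular symbols lies in $c_1^{-1}\Lambda=\Lambda$ itself, and not merely in $\tfrac12\Lambda$.

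First I would start from Birch's formula~\eqref{birch_eq}, which gives $\LLa(E,\chi)=\tfrac1{\Omega_\epsilon(E)}\sum_a\chi(a)\,\lambda\bigl(\tfrac am\bigr)$ with $a$ running over $(\ZZ/m\ZZ)^\times$. The elementary fact I would then establish is that, writing $D=\gcd(m,N)$, two cusps $\tfrac am$ and $\tfrac bm$ of equal denominator $m$ are already $\Gamma_1(N)$-equivalent as soon as $a\equiv b\pmod D$: reducing a matrix of $\Gamma_1(N)$ modulo $N$ to $\sm1q01$ shows that $\tfrac am$ and $\tfrac{a+qm}{m}$ represent the same cusp for every $q$, and $\{qm\bmod N\}$ is exactly the subgroup $D\,\ZZ/N\ZZ$, so only $a\bmod D$ matters. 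Fixing for each residue $\rho$ in the image of $(\ZZ/m\ZZ)^\times\to(\ZZ/D\ZZ)^\times$ a representative $a_\rho$, chosen so that $a_{-\rho}=-a_\rho$, I set $\mu_1\bigl(\tfrac am\bigr)=\lambda\bigl(\tfrac am\bigr)-\lambda\bigl(\tfrac{a_\rho}m\bigr)\in\Lambda$, where $\rho\equiv a\pmod D$.

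Next I would show that the correction terms cancel. Grouping the sum according to $\rho$, the coefficient of each $\lambda\bigl(\tfrac{a_\rho}m\bigr)$ is $\sum_{a\equiv a_\rho\,(D)}\chi(a)=\chi(a_\rho)\sum_{h\in H}\chi(h)$, where $H=\ker\bigl((\ZZ/m\ZZ)^\times\to(\ZZ/D\ZZ)^\times\bigr)$. Here the hypothesis $m\nmid N$ enters, as it is precisely equivalent to $D<m$: were $\chi|_H$ trivial, then $\chi$ would factor through the image in $(\ZZ/D\ZZ)^\times$ and so have conductor dividing $D<m$, contradicting primitivity. Hence each inner sum vanishes by orthogonality of characters, and $\sum_a\chi(a)\lambda\bigl(\tfrac am\bigr)=\sum_a\chi(a)\,\mu_1\bigl(\tfrac am\bigr)$ with every $\mu_1\bigl(\tfrac am\bigr)\in\Lambda$.

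Finally I would conclude exactly as in Proposition~\ref{mneqdelta_prop}. From $\lambda(-r)=\overline{\lambda(r)}$, as used in Lemma~\ref{muneg_lem}, together with the symmetric choice $a_{-\rho}=-a_\rho$, one gets $\mu_1(-r)=\overline{\mu_1(r)}$; pairing $a$ with $-a$ and using $\chi(-a)=\epsilon\,\chi(a)$ rewrites the value as $\tfrac1{\Omega_\epsilon(E)}\sum_a\chi(a)\bigl(\mu_1(\tfrac am)+\epsilon\,\overline{\mu_1(\tfrac am)}\bigr)$. Because $\mu_1(\tfrac am)\in\Lambda$, and $\{\re z : z\in\Lambda\}=\tfrac12\Omega_+(E)\,\ZZ$ while $\{\im(z)\,i : z\in\Lambda\}\subseteq\tfrac12\Omega_-(E)\,\ZZ$, each quotient $\bigl(\mu_1(\tfrac am)+\epsilon\,\overline{\mu_1(\tfrac am)}\bigr)/\Omega_\epsilon(E)$ is an integer, and since $\chi$ takes values in $\ZZ[\zeta_d]$ this yields $\LLa(E,\chi)\in\ZZ[\zeta_d]$. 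The main point to get right is the second step: the precise $\Gamma_1(N)$-equivalence of same-denominator cusps and the resulting subgroup $H$, since this is simultaneously what produces loops in $E$ (using $c_1=1$) and, through primitivity, what forces the correction terms to cancel. I expect that working on $X_1(N)$ is in fact cleaner here, as the cusps become equivalent as honest elements of $\Lambda$ rather than of $\tfrac12\Lambda$, thereby sidestepping the factor-of-$2$ difficulty that required the separate treatment of $\delta=2$ on $X_0(N)$.
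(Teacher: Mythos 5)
Your construction is, for $D=\gcd(m,N)\geq 3$, exactly the paper's proof: the same appeal to Birch's formula, the same $\Gamma_1(N)$-equivalence criterion ($\tfrac{a}{m}\sim\tfrac{b}{m}$ iff $a\equiv b\pmod D$, cited there from Diamond--Shurman), the same vanishing of the correction terms via the character sum over $H=\ker\bigl((\ZZ/m\ZZ)^\times\to(\ZZ/D\ZZ)^\times\bigr)$ and primitivity, and the same $a\leftrightarrow -a$ pairing. In that range your symmetric choice of representatives is legitimate, since $\rho$ and $-\rho$ are distinct classes whenever $D\geq 3$ (if $\rho\equiv-\rho\pmod D$ with $\rho$ invertible then $D\mid 2$), and your insistence that the representatives lie in $(\ZZ/m\ZZ)^\times$ is in fact slightly cleaner than the paper's ``least residue of $a$ modulo $D$'', which need not be coprime to $m$.

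The genuine gap is the degenerate case $D\leq 2$, which your proof silently assumes away and to which the paper devotes the entire second half of its argument. When $D\mid 2$, every unit modulo $m$ lies in a single class $\rho$ with $\rho=-\rho$, so the condition $a_{-\rho}=-a_\rho$ is unsatisfiable: it would force $a_\rho\equiv -a_\rho\pmod m$, i.e.\ $m\mid 2a_\rho$ with $a_\rho$ invertible and $m>2$. Consequently $\mu_1(-r)=\overline{\mu_1(r)}$ fails; writing $w=\lambda\bigl(\tfrac{a_\rho}{m}\bigr)$ one only has $\mu_1\bigl(\tfrac{-a}{m}\bigr)=\overline{\mu_1\bigl(\tfrac{a}{m}\bigr)}+(\overline{w}-w)$, so after pairing an extra term $\epsilon\,(\overline{w}-w)\sum_a\chi(a)$ (sum over a half range) survives, and this half-range character sum does not vanish in general, while $w$ itself is only a torsion value, not an element of $\Lambda$. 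The paper handles this by writing $\lambda\bigl(\tfrac{a}{m}\bigr)=w+\nu(a)$ with $\nu(a)\in\Lambda$, showing for even $\chi$ that $\sum_{a=1}^{m/2}\chi(a)=0$ using $4\mid m$ and $\chi\bigl(1+\tfrac{m}{2}\bigr)=-1$, and for odd $\chi$ using that $w-\overline{w}\in\Lambda$ (as $\tfrac{1}{m}\sim\tfrac{-1}{m}$ precisely when $D\mid 2$) together with the saving factor of $2$; the same decomposition also disposes of $D=1$, where the full character sum kills $w$ outright. Your closing remark that the $X_1(N)$ setting ``sidesteps the factor-of-2 difficulty that required the separate treatment of $\delta=2$'' is therefore exactly backwards: the analogous case $D\leq 2$ still requires its own argument, and without it your proof does not cover characters with $\gcd(m,N)\leq 2$, e.g.\ any $m$ coprime to $N$.
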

\begin{proof}
  We define an analogue of $\alpha(a,m)$ in this proof to account for our change in parametrisation.
  We let $\beta(a,m)$ be the least residue of $a$ modulo $D$, where $D=\gcd(m,N)$ as before.
  For any $a$ and $b$ that are coprime to $m$, the cusps $\tfrac{a}{m}$ and $\tfrac{b}{m}$ are $\Gamma_1(N)$-equivalent if and only if $a\equiv b\pmod{D}$; see for instance Proposition~3.8.3 in~\cite{diamond_shurman}.
  Assume first that $D\neq 2$ which assures that the least residue $\beta(a,m)$ of $a$ modulo $D$ is well-defined.
  Set $\mu\bigl(\tfrac{a}{m}\bigr) = \lambda\bigl(\tfrac{a}{m}\bigr) - \lambda\bigl(\tfrac{\beta(a,m)}{m}\bigr)$.
  Then it is not hard to check that $\mu\bigl(\tfrac{a}{m}\bigr) \in 1/c_1 \, \Lambda= \Lambda$ and that $\mu(-r) = \overline{\mu(r)}$ for all $r=\tfrac{a}{m}$.
  With these two properties one can now follow precisely the proof of Proposition~\ref{mneqdelta_prop}.
  The corresponding sum that replaces the sum in~\eqref{dsum_eq} is
  \[
    \sum_{\substack{-D/2 < x < D/2\\ (x,m)=1}} \lambda\bigl(\tfrac{x}{m}\bigr) \sum_{\substack{a \bmod m\\ a \equiv x \pmod{D}}} \chi(a)
  \]
  which is $0$ as long as $\chi$ is a primitive character modulo $m$ and $D\neq m$.

  Now to the case when $D=2$.
  Since $m$ is even, it must be divisible by $4$.
  All cusps $\tfrac{a}{m}$ with $a$ coprime to $m$ are $\Gamma_1(N)$-equivalent.
  Write $w = \lambda\bigl(\tfrac{1}{m}\bigr)$.
  Then $w - \overline{w}=\lambda\bigl(\tfrac{1}{m}\bigr) - \lambda\bigl(\tfrac{-1}{m}\bigr)$ is an element of $ 1/c_1 \,\Lambda=\Lambda$.
  More generally $\lambda\bigl(\tfrac{a}{m}\bigr) = w + \nu(a)$ with $\nu(a)\in\Lambda$.
  We compute
  \[
  \LLa(E,\chi) \, \cdot \, \Omega_\epsilon(E) = \sum_{a \bmod m} \chi(a) \cdot\bigl( w + \nu(a) \bigr)
     = \sum_{a=1}^{m/2} \chi(a)\cdot \bigl( \nu(a) + \epsilon\, \overline{\nu(a)}\bigr) + \Bigl( \sum_{a=1}^{m/2}  \chi(a)\Bigr) \cdot \bigl( w + \epsilon \,\overline{w}\bigr).
  \]
  The first sum in the last expression belongs to $\ZZ[\zeta_d]$ because $\nu(a) \in \Lambda$.
  Finally
  \[
    \sum_{a=1}^{m/2}  \chi(a) =\sum_{a=1}^{m/4} (\chi(a) +\chi(\tfrac{m}{2}-a)) = \sum_{a=1}^{m/4} \chi(a) \cdot (1-\epsilon)
  \]
  shows that the second sum in this expression is zero if $\epsilon = 1$.
  Instead if $\epsilon=-1$ then
  \[
  2\, (w - \overline{w} )\cdot  \sum_{a=1}^{m/4} \chi(a)
  \]
  also belongs to $\ZZ[\zeta_d]$ because $w-\overline{w}\in\Lambda$.
\end{proof}

\section{Integrality using the Galois action}\label{gal_sec}

In this section we will use the Galois action on cusps to obtain further cases when $\LLa(E,\chi)$ is integral.
As the statements are a bit stronger, we will use the modular parametrisation $\varphi_1\colon X_1(N) \to E$, but the argument works the same for $X_0(N)$.
For a cusp $r$, we will denote by $P_r= \varphi_1(r)\in E(\overline{\QQ})$.
Under the isomorphism $E(\CC) \cong \CC/\Lambda$ this point corresponds to $c_1\cdot \lambda(r) + \Lambda$.
Throughout the section we will assume that $c_1=1$.
Also, since we know the integrality already when $m\nmid N$ by Proposition \ref{x1_prop}, in this section we will prove results for $m\mid N$.

If $\LLa(E,\chi)$ is not integral, then there is some $a$ for which $\lambda\bigl(\tfrac{a}{m}\bigr)$ does not belong to $\Lambda$.
We will see that this implies that the point $P_{a/m}$ is a non-trivial torsion point.
\begin{lem}\label{pr_lem}
 For any $r=\tfrac{a}{m}$ with $m \mid N$, the torsion point $P_{r}$ is defined over $\QQ(\zeta_{m})$.
\end{lem}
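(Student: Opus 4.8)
The plan is to exploit that the modular parametrisation $\varphi_1\colon X_1(N)\to E$ is defined over $\QQ$, so that for every $\sigma\in\Gal(\overline{\QQ}/\QQ)$ one has $\sigma(P_r)=\sigma(\varphi_1(r))=\varphi_1(\sigma(r))$. Hence it suffices to prove that the cusp $r=\tfrac{a}{m}$, regarded as a point of $X_1(N)(\overline{\QQ})$, is already defined over $\QQ(\zeta_m)$; the point $P_r$ will then lie in $E(\QQ(\zeta_m))$ whether or not it is torsion. Since the cusps of $X_1(N)$ are all rational over $\QQ(\zeta_N)$ and the quotient $\Gal(\QQ(\zeta_N)/\QQ)\cong(\ZZ/N\ZZ)^\times$ acts on the set of cusps, I am reduced to an explicit computation of this action on the single cusp $\tfrac{a}{m}$, where I may assume $\gcd(a,m)=1$.

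First I would make the action explicit through the Tate curve. Writing $\sigma_t$ for the automorphism with $\sigma_t(\zeta_N)=\zeta_N^{t}$, and choosing $\sm{a}{b}{m}{d}\in\SL_2(\ZZ)$ with $ad-bm=1$, a standard degeneration computation identifies the cusp $\tfrac{a}{m}$ with the pair $\bigl(\mathrm{Tate}(q),\,\zeta_N^{d}\,q^{m/N}\bigr)$, where $\zeta_N^{d}q^{m/N}$ is the chosen point of order $N$. As $\sigma_t$ fixes $q^{1/N}$ and sends $\zeta_N$ to $\zeta_N^{t}$, it carries this cusp to the one attached to $\zeta_N^{td}q^{m/N}$, that is to the cusp $\tfrac{a'}{m}$ whose data satisfy $d'\equiv td\pmod{N}$ and $a'd'\equiv1\pmod{m}$. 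Using $ad\equiv1\pmod m$ this yields $a'\equiv t^{-1}a\pmod m$, so the action fixes the denominator $m$ and merely rescales the numerator invariant.

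It then remains to recall, from Proposition~3.8.3 in~\cite{diamond_shurman}, that two cusps $\tfrac{a}{m}$ and $\tfrac{a'}{m}$ with $m\mid N$ are $\Gamma_1(N)$-equivalent precisely when $a'\equiv\pm a\pmod{m}$, since here $\gcd(m,N)=m$. Therefore $\sigma_t$ fixes the cusp $\tfrac{a}{m}$ as soon as $t^{-1}a\equiv\pm a\pmod m$, hence in particular whenever $t\equiv 1\pmod m$, which is exactly the condition cutting out $\Gal(\QQ(\zeta_N)/\QQ(\zeta_m))$. Thus the cusp is fixed by this subgroup and so is defined over $\QQ(\zeta_m)$, giving $P_r\in E(\QQ(\zeta_m))$ as claimed.

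The main obstacle is pinning down the Galois action precisely: establishing the Tate-curve dictionary between the fraction $\tfrac{a}{c}$ and the torsion point $\zeta_N^{d}q^{c/N}$, and verifying that it is only the cyclotomic part $\sigma_t$, and not the ramification in the uniformiser $q^{1/N}$, that governs the field of definition of the cusp as a closed point of $X_1(N)_{/\QQ}$. Some minor care is also needed for the sign ambiguity $\pm$ and for small $m$; but because $m\mid N$ forces $\gcd(m,N)=m$, the $\Gamma_1(N)$-invariant of the cusp is simply $a\bmod m$, which keeps the bookkeeping clean and makes the condition $t\equiv1\pmod m$ fall out directly.
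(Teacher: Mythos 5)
Your proposal is correct and takes essentially the same route as the paper: both arguments rest on the cusps of $X_1(N)$ being rational over $\QQ(\zeta_N)$ with $\sigma_t$ sending $\tfrac{a}{m}$ to $\tfrac{t^{-1}a}{m}$, so that $t\equiv 1\pmod{m}$ gives a $\Gamma_1(N)$-equivalent cusp, whence the point $P_r=\varphi_1(r)$ is fixed since $\varphi_1$ is defined over $\QQ$. The only difference is that the paper quotes this Galois action from the proof of Lemma~3.11 in Stevens, where you rederive it via the Tate-curve moduli description; note also that your equivalence criterion $a'\equiv\pm a\pmod{m}$ is slightly too generous in general (the minus sign requires $N\mid 2m$), but this is harmless since you only use the implication from $a'\equiv a\pmod{m}$, which is correct.
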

\begin{proof}
  In the proof of Lemma~3.11 in~\cite{stevens89} the action of the Galois group on the cusps in $X_1(N)$ is explicitly given.
  The cusps are defined over $\QQ(\zeta_N)$ and the element $\sigma_b\in\Gal\bigl(\QQ(\zeta_N)/\QQ\bigr)$ sending $\zeta_N$ to $\zeta_N^b$ acts on the cusp represented by $\tfrac{a}{m}$ by sending it to the cusp $\tfrac{a b^*}{m}$ where $bb^*\equiv 1 \pmod{N}$.
  If $b\equiv 1 \pmod{m}$, then $b^*\equiv 1 \pmod{m}$ and hence the cusp $\tfrac{a b^*}{m}$ is $\Gamma_1(N)$-equivalent to $\tfrac{a}{m}$.
  Hence these $\sigma_b$ fix the cusp $\tfrac{a}{m}$ on $X_1(N)$ and hence $P_r$ in $E(\overline{\QQ})$.
\end{proof}
If one uses the parametrisation $\varphi_0\colon X_0(N)\to E$ instead, one can show that the points $P^0_{a/m}= \varphi_0(a/m)$ are defined over $\QQ(\zeta_{\delta})$ using Theorem~1.3.1 in~\cite{stevens}.
In particular $P^0_r\in E(\QQ)$ for all $r$ if $E$ is semistable.

\begin{prop}\label{kchi_prop}
  Assume that $c_1=1$.
  Let $\chi$ be a non-trivial primitive character of conductor $m$ and order $d$ such that $m \mid N$.
  Let $K_\chi$ be the field fixed by the kernel of $\chi$.
  Suppose $K_\chi\not\subset \QQ(P_{1/m})$.
  Then $\LLa(E,\chi) \in \ZZ[\zeta_d]$.
\end{prop}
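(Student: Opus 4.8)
The plan is to build on the structural analysis developed for Proposition~\ref{x1_prop}, adapting the argument to the delicate case $m \mid N$ where the modular symbols $\lambda(a/m)$ need not lie in $\Lambda$. Under the isomorphism $E(\CC)\cong\CC/\Lambda$, the point $P_{a/m}$ corresponds to $c_1\cdot\lambda(a/m)+\Lambda = \lambda(a/m)+\Lambda$ (using $c_1=1$), so $\lambda(a/m)\in\Lambda$ exactly when $P_{a/m}$ is the origin. Thus the possible denominator of $\LLa(E,\chi)$ is governed entirely by the torsion points $P_{a/m}$, which by Lemma~\ref{pr_lem} are all defined over $\QQ(\zeta_m)$. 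The first step is therefore to write $\lambda(a/m)=\ell(a)$ as a representative in $\CC$ of the torsion point $P_{a/m}$, and to track how the Galois action on cusps translates into a relation among these representatives.

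The key idea is to exploit the Galois action described in the proof of Lemma~\ref{pr_lem}: the element $\sigma_b\in\Gal(\QQ(\zeta_N)/\QQ)$ sends the cusp $\tfrac{a}{m}$ to $\tfrac{ab^*}{m}$, hence sends $P_{a/m}$ to $P_{ab^*/m}$ (reading $b^*$ modulo $m$ since $m\mid N$). Taking $a=1$, the entire Galois orbit of $P_{1/m}$ consists of the points $P_{b^*/m}$ as $b$ ranges over $(\ZZ/m\ZZ)^\times$, so $\QQ(P_{1/m})$ is the fixed field of those $\sigma_b$ that fix $P_{1/m}$. I would next express the sum computing $\LLa(E,\chi)\cdot\Omega_\epsilon(E)=\sum_{a}\chi(a)\,\lambda(a/m)$ in terms of the torsion points $P_{a/m}\in\CC/\Lambda$, and observe that the obstruction to integrality lives in $\tfrac{1}{\#\langle P_{1/m}\rangle}\Lambda/\Lambda$. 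The hypothesis $K_\chi\not\subset\QQ(P_{1/m})$ should force a Galois-averaging cancellation: since $\chi$ cuts out $K_\chi$ and the torsion contribution factors through $\Gal(\QQ(P_{1/m})/\QQ)$, characters whose fixed field is not contained in $\QQ(P_{1/m})$ must restrict nontrivially on the relevant subgroup, killing the fractional part by orthogonality of characters, much as the inner sum in~\eqref{dsum_eq} vanished by primitivity.

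Concretely, I expect to decompose $\lambda(a/m)=w_a+\nu(a)$ with $\nu(a)\in\Lambda$ and $w_a$ a chosen lift of the torsion point $P_{a/m}$, mirroring the $\lambda(a/m)=w+\nu(a)$ splitting used in the $D=2$ case of Proposition~\ref{x1_prop}. The $\nu(a)$-terms contribute an element of $\ZZ[\zeta_d]$ after the real/imaginary projection and the pairing with $\chi(a)+\epsilon\,\overline{\chi(a)}$ arguments already established. The remaining torsion terms $\sum_a\chi(a)\,w_a$ must be shown to lie in $\Lambda$; here I would use that $a\mapsto P_{a/m}=P_{1/m}^{\,\text{(via }\sigma)}$ realises a $\Gal$-equivariant map whose image generates $\QQ(P_{1/m})$, so the sum is, up to a Gauss-sum-type factor, a trace-like expression over $\Gal(\QQ(\zeta_m)/\QQ(P_{1/m}))$ against the character $\chi$. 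When $K_\chi\not\subset\QQ(P_{1/m})$, this trace is a sum of a nontrivial character over a coset of a subgroup, hence vanishes.

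The main obstacle I anticipate is the bookkeeping of the lifts: the representative $w_a\in\CC$ of a torsion point is only well-defined modulo $\Lambda$, so the assignment $a\mapsto w_a$ is not obviously additive or Galois-equivariant on the nose, and I must choose lifts (or work directly in $\CC/\Lambda$ and only project to $\RR$ or $i\RR$ at the end via the parity $\epsilon$) so that the character sum genuinely factors through $\Gal(\QQ(P_{1/m})/\QQ)$. Reconciling the complex-conjugation symmetry $\mu(-r)=\overline{\mu(r)}$ (Lemma~\ref{muneg_lem}) with the Galois orbit structure, and handling the parity $\epsilon$ correctly so that only the real part (if $\epsilon=+1$) or imaginary part (if $\epsilon=-1$) contributes, will be the technical crux; the orthogonality argument itself should then be short.
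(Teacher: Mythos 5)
Your first half reproduces the paper's argument faithfully: the paper takes $H\leq (\ZZ/m\ZZ)^\times\cong\Gal\bigl(\QQ(\zeta_m)/\QQ\bigr)$ to be the stabiliser of $P_{a/m}$, picks coset representatives $U$ of $G/H$, records that $\lambda\bigl(\tfrac{ah}{m}\bigr)-\lambda\bigl(\tfrac{a}{m}\bigr)\in\Lambda$ for $h\in H$ (this is precisely your coset-constant choice of lifts, with $w_a=\lambda\bigl(\tfrac{u}{m}\bigr)$ for $a\in uH$), and kills the base terms via $\sum_{h\in H}\chi(h)=0$, which holds because $K_\chi\not\subset\QQ(P_{1/m})$ forces $\chi\vert_H$ to be nontrivial. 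Your worry about the lifts is resolved in the paper simply by working with $\kappa(a)=\bigl[\tfrac{a}{m}\bigr]^{\epsilon}-\bigl[\tfrac{u}{m}\bigr]^{\epsilon}$, i.e.\ projecting to the relevant real coordinate before doing any Galois bookkeeping; so far your plan and the paper coincide.

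The genuine gap is in the second half, exactly where you stop. After the orthogonality cancellation one is left with $\sum_a\chi(a)\kappa(a)$ where $\kappa(a)$ lies only in $\tfrac{1}{2}\ZZ$, because $\{\re(z)\mid z\in\Lambda\}=\tfrac{1}{2}\Omega_+(E)\,\ZZ$ and $\{\im(z)\,i\mid z\in\Lambda\}\subset\tfrac{1}{2}\Omega_-(E)\,\ZZ$; your claim that the $\nu(a)$-part ``contributes an element of $\ZZ[\zeta_d]$'' by the already-established pairing arguments is therefore premature. To gain the needed factor $2$ by pairing $a$ with $-a$ one must show $\kappa(-a)\equiv\epsilon\,\kappa(a)\pmod{\ZZ}$, and this interacts nontrivially with the orbit structure. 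The paper's case analysis supplies the missing mechanism: if $-1\notin H$ one may choose $U$ stable under negation, giving $\kappa(-a)=\epsilon\,\kappa(a)$ exactly; if $-1\in H$ then all $P_{a/m}$ are real, the pairing still works for $\epsilon=+1$, but for $\epsilon=-1$ one has $\kappa(-a)=-\kappa(a)-2\bigl[\tfrac{u}{m}\bigr]^{-}$, and one must first dispose of $c_\infty=2$ (there $\kappa(a)\in\ZZ$ already, since then $\{\im(z)\,i\mid z\in\Lambda\}=\Omega_-(E)\,\ZZ$) and then invoke that for $c_\infty=1$ the lattice is non-rectangular, so a real torsion point forces $\bigl[\tfrac{u}{m}\bigr]^{-}\in\tfrac{1}{2}\ZZ$ and hence $\kappa(-a)\equiv-\kappa(a)\pmod{\ZZ}$. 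None of these ingredients --- the negation-stable choice of $U$, the dichotomy on whether $-1\in H$, and the half-integrality of $\bigl[\tfrac{u}{m}\bigr]^{-}$ for real points on a non-rectangular lattice --- appears in your proposal; you correctly flag the conjugation/parity reconciliation as the crux but do not supply the argument, and without it the proof does not close.
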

Before we start with the proof, we introduce the standard notation for the normalised modular symbols $[r]^{\pm}$; which we define by
\[
  \bigl[ r \bigr]^+ = \frac{\re\bigl(\lambda(r)\bigr)}{\Omega_+(E)}\qquad\text{ and }\qquad \bigl[ r \bigr]^- = \frac{\im\bigl(\lambda(r)\bigr)\,i}{\Omega_-(E)} = \frac{\im\bigl(\lambda(r)\bigr)}{\vert \Omega_-(E)\vert}
\]
for any $r\in\QQ$.

From formula~\eqref{birch_eq}, we get that
\[
  \Omega_{\epsilon}(E) \cdot \LLa(E,\chi) = \frac{1}{2} \sum_{a\bmod m} \biggl(\chi(a) \,\lambda\Bigl(\frac{a}{m}\Bigr) + \chi(-a)\, \lambda\Bigl(\frac{-a}{m}\Bigr) \biggr)
  =\sum_{a \bmod m} \chi(a) \,\frac{\lambda\bigl(\tfrac{a}{m}\bigr) + \epsilon \lambda\bigl(\tfrac{-a}{m}\bigr)}{2}
\]
which can be rewritten as
\begin{equation}\label{birch2_eq}
  \LLa(E,\chi) = \sum_{a\bmod m} \chi(a)\cdot \Bigl[\frac{a}{m}\Bigr]^{\epsilon}.
\end{equation}
From $\lambda(-r)=\overline{\lambda(r)}$, it follows that $\bigl[\tfrac{-a}{m}\bigr]^{\epsilon} =\epsilon\cdot \bigl[\tfrac{a}{m}\bigr]^{\epsilon}$.
\begin{proof}
  From the above lemma, we know that $P_{a/m}$ belongs to $\QQ(\zeta_m)$ and how its Galois group $G=\Gal\bigl(\QQ(\zeta_m)/\QQ\bigr)$, identified with $(\ZZ/m\ZZ)^\times$ in the usual way, acts on these points:
  if $b$ is in $(\ZZ/m\ZZ)^\times$ then $\sigma_b(P_{a/m}) = P_{ab^*/m}$ where $b^*$ is the inverse of $b$.
  In particular the Galois group acts transitively on the set of (not necessarily distinct) points $P_{a/m}$ as $a$ varies through invertible elements modulo $m$.

  Let $H$ be the stabiliser of $P_{a/m}$ viewed as a subgroup of $(\ZZ/m\ZZ)^{\times}$ and $F$ the field fixed by $H$, so that $P_{a/m} \in E(F)$.
  Pick a set of coset representatives $U$ of $G/H$.

  Because each $h\in H$ fixes $P_{a/m}$, we find the following relations:
  For each invertible $a$ modulo $m$ and $h\in H$
  \begin{equation}\label{ah_eq}
    \lambda\bigl(\tfrac{ah}{m}\bigr) -\lambda\bigl(\tfrac{a}{m}\bigr) \ \in \frac{1}{c_1}\Lambda=\Lambda.
  \end{equation}
  Let $u \in U$ and for $a\in uH$ define
  \[
    \kappa(a) = \Bigl[\frac{a}{m}\Bigr]^{\epsilon} - \Bigl[\frac{u}{m}\Bigr]^{\epsilon}
  \]
  From the above equation~\eqref{ah_eq}, we see that $\kappa(a)\in\tfrac{1}{2}\ZZ$ and, if $\epsilon=-1$ and $c_{\infty}=2$ then even $\kappa(a)\in\ZZ$.

  Using~\eqref{birch2_eq}, the algebraic $L$-value becomes
  \[
    \LLa(E,\chi) = \sum_{a\bmod m} \chi(a) \biggl(\kappa(a) + \Bigl[\frac{u}{m}\Bigr]^{\epsilon}\biggr)
    =\sum_{a\bmod m} \chi(a) \kappa(a) + \sum_{u \in U} \sum_{h\in H} \chi(uh) \Bigl[\frac{u}{m}\Bigr]^{\epsilon}.
  \]
  The last sum on the right is is equal to
  \[
    \sum_{u\in U}\chi(u)\,\Bigl[\frac{u}{m}\Bigr]^{\epsilon}\cdot \sum_{h\in H}\chi(h).
  \]
  By our hypothesis, $\chi$ is not trivial on $H$ as otherwise $K_\chi\subset F$ and hence this last sum is zero giving $\LLa(E,\chi) = \sum_a \chi(a)\,\kappa(a)$.
  This already proves the lemma in case $\epsilon=-1$ and $c_{\infty}=2$.

  Otherwise, as before, we are left with trying to eliminate the possible denominator~$2$.
  First we assume that  $-1\not\in H$ or equivalently that $P_{1/m}\not\in E(\RR)$.
  Then we may choose $U$ such that, if $u\in U$ then $-u\in U$.
  Then $-a \in -uH$ for all $a\in uH$.
  Therefore
  \[
    \kappa(-a) = \Bigl[\frac{-a}{m}\Bigr]^{\epsilon} - \Bigl[\frac{-u}{m}\Bigr]^{\epsilon} = \epsilon\cdot \kappa(a).
  \]
  We get
  \[
    \LLa(E,\chi) = \sum_{a=1}^{[\frac{m-1}{2}]} \Bigl(\chi(a)\kappa(a) + \chi(-a)\kappa(-a)\Bigr) =
    \sum_{a=1}^{[\frac{m-1}{2}]} \chi(a)\cdot 2 \kappa(a).
  \]
  Since $\kappa(a)\in\tfrac{1}{2}\ZZ$, we can conclude that $\LLa(E,\chi)\in\ZZ[\zeta_d]$.

  We may now assume that $-1$ belongs to $H$ and hence $P_{a/m}\in E(\RR)$ for all $a$.
  This time $-a\in uH$ if $a\in uH$.
  Therefore
  \[
    \kappa(-a) = \epsilon\,\Big[\frac{a}{m}\Bigr]^{\epsilon} - \Bigr[\frac{u}{m}\Bigr]^{\epsilon}
    =\begin{cases} \kappa(a) &\text{ if $\epsilon=+1$}\\ -\kappa(a)- 2 [\frac{u}{m}]^{-}\ &\text{ if $\epsilon=-1$.}\end{cases}
  \]
  Therefore if $\chi$ is even, the same argument as for $-1\not\in H$ works.
  Otherwise, if $\chi$ is odd, then, by the earlier conclusion, we may assume that $c_{\infty}=1$.
  In that case, the lattice is not rectangular and so $P_{u/m}\in E(\RR)$ implies that $\bigl[\frac{u}{m}\bigr]^{-}$ is in $\tfrac{1}{2}\ZZ$ for all $u\in U$.
  Hence in that case $\kappa(-a)$ differs from $-\kappa(a)$ by an integer and we can prove the integrality again.
\end{proof}

This argument uses ingredients similar to those in the result in Theorem~3.14 in~\cite{stevens89}.
In this theorem, Stevens proves an integrality statement for the Stickelberger elements which is a bit weaker than our refined result here.

Note that Proposition~\ref{kchi_prop} implies the following.
\begin{cor}\label{lachi_tors_cor}
If $c_1=1$ and $\LLa(E,\chi)$ is non-integral for some non-trivial $\chi$ of conductor $m$ then $E\bigl(\QQ(\zeta_m)\bigr)$ contains a torsion point that is not defined over $\QQ$.
\end{cor}

\section{Torsion points over abelian extensions}
\label{tors_sec}

We gather some statements about the possibility of acquiring a new torsion point in an abelian extension of~$\QQ$.
In our proofs we make use of Kenku's classification~\cite{kenku} of cyclic isogenies defined over~$\QQ$.

\begin{lem}\label{abtor_odd_lem}
  Let $E/\QQ$ be an elliptic curve and~$p$ an odd prime number and~$n\geq 1$.
  Suppose~$P$ is a point of order~$p^n$ defined over an abelian extension~$K/\QQ$.
  Then~$\QQ(P)$ is contained in a field obtained by adjoining points in the kernel of cyclic isogenies~$\phi\colon E\to E'$ defined over~$\QQ$ whose degrees are powers of~$p$.
\end{lem}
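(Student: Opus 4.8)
The plan is to exploit the Galois action on the point $P$ of order $p^n$ together with Kenku's classification of rational cyclic isogenies to produce a $\Gal(\overline\QQ/\QQ)$-stable flag of cyclic subgroups, each step of which is a rational cyclic isogeny of $p$-power degree. The starting observation is that because $K/\QQ$ is abelian, the image of the mod-$p^n$ Galois representation $\rho\colon \Gal(\overline\QQ/\QQ)\to \GL_2(\ZZ/p^n\ZZ)$ restricted to the decomposition cut out by $P$ lands in an abelian subgroup. Since $P$ generates a $\Gal$-stable cyclic group only up to conjugacy, the key point is that an abelian subgroup of $\GL_2(\ZZ/p^n\ZZ)$ that fixes no line can nevertheless be diagonalised after a base change, and the obstruction to diagonalising over $\ZZ/p^n\ZZ$ itself is exactly what forces the existence of a rational subgroup.

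First I would reduce to the statement that the $\QQ$-rational Galois image on $E[p^n]$ stabilises a cyclic subgroup of $p$-power order containing a multiple of $P$. Concretely, let $C$ be the cyclic subgroup generated by $P$; the Galois conjugates of $C$ are the subgroups $\sigma(C)$, and since $K$ is abelian these are governed by an abelian quotient of $\Gal$. I would argue that the subgroup generated by all Galois conjugates of $P$ is itself $\Gal$-stable and, being a subgroup of $E[p^n]$, is either cyclic or all of $E[p^n]$; in the cyclic case it is automatically a rational cyclic $p$-power subgroup and adjoining its points recovers $\QQ(P)$. The genuinely delicate case is when the conjugates of $P$ span all of $E[p^n]$: here I would use that an \emph{abelian} subgroup of $\GL_2$ acting irreducibly on $(\ZZ/p^n\ZZ)^2$ must act by scalars on the residual representation modulo $p$, and then analyse the filtration by $p$-power torsion to locate a Galois-stable line modulo $p$, i.e.\ a rational $p$-isogeny. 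Iterating the isogeny (replacing $E$ by $E'=E/\langle\text{kernel}\rangle$) then peels off one factor of $p$ at a time.

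Once a rational cyclic $p$-isogeny $\phi\colon E\to E'$ exists, Kenku's classification constrains which degrees $p^k$ can occur over $\QQ$, and I would use it to guarantee that the chain of isogenies closes up so that $P$ is accounted for: after passing through the isogeny, the image $\phi(P)$ has order dividing $p^{n-1}$ and still lies in an abelian extension, so I would run the argument by induction on $n$. The base case $n=1$ is the assertion that a point of order $p$ defined over an abelian field forces a rational $p$-isogeny, which is the heart of the matter and is where the oddness of $p$ is used — for $p$ odd the relevant abelian subgroups of $\GL_2(\FF_p)$ that could obstruct a rational line (the non-split Cartan and its normaliser) are ruled out by abelianness together with the existence of a rational point in the kernel.

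The main obstacle I anticipate is precisely the irreducible case, where the conjugates of $P$ fill out $E[p^n]$: there the existence of a rational cyclic subgroup is not visible directly from $P$ and must be extracted from the structure of abelian subgroups of $\GL_2(\ZZ/p^n\ZZ)$. The cleanest route is to reduce modulo $p$, classify the abelian irreducible subgroups of $\GL_2(\FF_p)$ (forcing them into a Cartan subgroup), and then check that abelianness together with $P$ being defined over an \emph{abelian} extension — not merely Galois — collapses the non-split Cartan possibility, since a non-split Cartan image would force $\QQ(P)$ to contain a non-abelian piece unless a rational isogeny intervenes. Matching this mod-$p$ analysis back up to level $p^n$ and invoking Kenku to control the isogeny degrees is the step I expect to require the most care.
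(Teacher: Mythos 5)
There is a genuine gap at the heart of your argument: the dichotomy you rely on is false. The subgroup $M$ generated by the Galois conjugates of $P$ contains a point of order $p^n$, so $M\cong \ZZ/p^n\ZZ\oplus\ZZ/p^b\ZZ$ for some $0\leq b\leq n$, and the intermediate cases $0<b<n$ are possible — $M$ need be neither cyclic nor all of $E[p^n]$. This intermediate case is precisely the hard part of the paper's proof, and your outline has no mechanism for it. Concretely, let $H$ be the subgroup of $G=\Gal\bigl(\QQ(E[p^n])/\QQ\bigr)$ fixing $P$ (and $\zeta_{p^n}$): it is normal with abelian quotient since $\QQ(P)/\QQ$ is abelian, and by normality it fixes every conjugate of $P$, hence acts trivially on $M$; when $0<b<n$ it can be a nontrivial unipotent group generated by $\sm{1}{p^k}{0}{1}$, and $G$ itself need \emph{not} be abelian — your repeated assumption that the image is abelian is only justified in the extreme case $M=E[p^n]$, where $H$ is forced to be trivial (an argument you do not give). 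To handle the unipotent case the paper makes complex conjugation act on the set $X$ of $H$-fixed points in $\PP^1(\ZZ/p^n\ZZ)$, which has odd cardinality $p^{[\frac{n-k}{2}]}$ (this is where $p$ odd enters), yielding a line $\langle U\rangle$ fixed by all of $G$; when $\langle U\rangle\neq\langle P\rangle$, a further commutator computation inside the fixer of $\QQ(P,U,\zeta_{p^n})$ produces a second rational line in $E[p]$, Kenku's classification then pins down $p=3$, $n=2$, and an explicit change of basis gives the required containment $\QQ(P)\subseteq\QQ(U,S)$. None of this machinery is visible in, or replaceable by, your sketch.

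Two further steps would fail as written. First, your structural claims about abelian images are incorrect: an abelian subgroup of $\GL_2(\ZZ/p^n\ZZ)$ acting irreducibly need not be scalar modulo $p$ (the non-split Cartan is abelian, irreducible and non-scalar), and there is no ``rational point in the kernel'' in the hypotheses to rule it out; what actually eliminates the non-split Cartan — and what the paper uses in its abelian case — is complex conjugation, an involution of determinant $-1$ with eigenvalues $+1,-1$ (distinct since $p$ is odd), whereas the unique involution of a non-split Cartan is $-I$ of determinant $1$. Diagonalising conjugation and using commutativity yields two rational cyclic $p^n$-isogenies and $\QQ(P)\subseteq\QQ(E[p^n])=\QQ(T_+,T_-)$. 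Second, your induction on $n$ via $\phi(P)$ cannot recover the lemma's conclusion, which is a field containment $\QQ(P)\subseteq\QQ(\text{kernel points})$ and not merely the existence of rational isogenies: an element $\sigma$ fixing $\phi(P)$ and $\ker\phi$ pointwise may still translate $P$ by a kernel element, so $\QQ(P)$ can strictly exceed the field generated by $\QQ(\phi(P))$ and $\ker\phi$; moreover the isogenies produced for $E'=E/\ker\phi$ have kernels in $E'$, not $E$, and composites of cyclic isogenies need not be cyclic. The paper avoids induction altogether and invokes Kenku exactly once, at the end of the hardest subcase, rather than as a device to ``close up the chain''.
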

This is a generalisation of Lemma~5 in~\cite{wuthrich_integral}.
\begin{proof}
  Let $G = \Gal\bigl( \QQ(E[p^n])/\QQ\bigr)$ and $N$ its subgroup corresponding to the intermediate field $\QQ(P,\zeta_{p^n})$.
  Since $\QQ(P)$ and $\QQ(\zeta_{p^n})$ are abelian extensions of $\QQ$ so is $\QQ(P,\zeta_{p^n})$.
  Therefore $N$ is a normal subgroup of $G$ with abelian quotient.

  Pick a basis $\{P,Q\}$ of $E[p^n]$ and use it to identify $G$ as a subgroup of $\GL_2(\ZZ/p^n\ZZ)$.
  The subgroup $N$ is then formed by the elements in $G$ of the form $\sm{1}{*}{0}{*}$ that belong to $\SL_2(\ZZ/p^n\ZZ)$.
  Therefore it is a subgroup of matrices of the form $\sm{1}{*}{0}{1}$.
  We have two cases: the special case when $N$ is trivial and the non-trivial case.

  \textit{Case 1:} $N$ is trivial.
  In this case $G$ itself is abelian.
  In order to prove the lemma for $N$ trivial we will explicitly create the field that contains $\QQ(P)$.
  Consider the complex conjugation $g\in G$.
  Since $p$ is odd, there is a basis $\{T_{+}, T_{-}\}$ of $E[p^n]$ such that $g(T_{\pm})=\pm T_{\pm}$.
  For any $h\in G$ we have $h(T_{\pm}) = \pm hg(T_{\pm}) = \pm gh(T_{\pm})$ as $G$ is abelian.
  Therefore $h(T_{\pm})$ is a multiple of $T_{\pm}$ and this shows that all elements in $G$ fix the subgroups generated by $T_{\pm}$.
  The two isogenies $\phi_{\pm}$ whose kernels are generated by $T_{\pm}$ are defined over $\QQ$, and $G$ is contained in the group of diagonal matrices with respect to the new basis $\{T_{+},T_{-}\}$.
  The lemma is then proven in this case as $\QQ(P)\subset \QQ\bigl( E[p^n] \bigr)=\QQ(T_{+},T_{-})$.
  (It turns out that in this Case~1, we are in a very special situation:
  Having two cyclic isogenies of degree $p^n$ leaving $E$, there is a curve in the isogeny class of $E$ over $\QQ$ with an isogeny of degree $p^{2n}$ defined over $\QQ$.
  By Kenku's classification~\cite{kenku}, we know that this only occurs if $p^n$ is $3$ or $5$.)

   \textit{Case 2:} $N$ is non-trivial.
  It is generated by $\sm{1}{p^k}{0}{1}$ for some $0\leq k < n$.
  We consider the action of $G$ and $N$ on the set of cyclic $p^n$-isogenies leaving from $E$, which we may identify with $\PP^1(\ZZ/p^n\ZZ)$ via our chosen basis $\{P,Q\}$ of $E[p^n]$.
  Let $X$ be the set of points fixed by $N$.
  The above generator fixes $(x:y)$ if and only if $y^2\,p^k =0$ in $\ZZ/p^n\ZZ$.
  Therefore $X$ is a set containing $p^m$ elements with $m= [\tfrac{n-k}{2}]$ all of which have the same reduction as $\langle P\rangle =(1:0)$ modulo $p$.
  Since $\# X$ is odd, the complex conjugation $g$ has precisely one fixed point on $X$, say $x_0 = \langle U \rangle$.
  The quotient $G/N$ acts on $X$.
  Since $G/N$ is abelian, we can again conclude that $x_0$ is fixed by all elements in $G/N$.
  Therefore $x_0$ is fixed by all of $G$.

  First, we can treat the easier situation when $x_0=\langle P \rangle$: The isogeny with $P$ in its kernel is then defined over $\QQ$ and the lemma is proven again.
  Since we assume that $N$ is non-trivial, we are in this situation if $n=1$ because then $X$ only contains $\langle P \rangle$.

  Therefore, we are left with the more complicated situation when $n>1$, the group $N$ is non-trivial and $x_0\neq \langle P \rangle$.
  Set $N'$ to be the subgroup of $G$ corresponding to the field $\QQ(P,U,\zeta_{p^n})$.
  Again this is a normal subgroup of $G$ with abelian quotient.
  If $N'$ is trivial, then $G$ is abelian and we can conclude as above.
  Therefore we assume that $N'$ is not trivial and hence it is generated by $h=\sm{1}{p^m}{0}{1}\in N'$ for some $k\leq m < n$.
  For any two $g= \sm{a}{b}{0}{d}$ and $g'=\sm{a'}{b'}{0}{d'}$ in $G$, we must have $gh\in hgN'$.
  This is equivalent to $ab'+bd' \equiv a'b+b'd \pmod{p^m}$.
  This implies that $b'(a-d)\equiv b(a'-d')\pmod{p}$.
  Let $\bar G$ be the image of $G$ in $\GL_2(\FF_p)$, which is the Galois group of $\QQ(E[p])/\QQ$.
  Pick any $\sm{a}{b}{0}{d}\in\bar G$ with $a\neq d$ which exists because the determinant is surjective onto $\FF_p^{\times}$.
  The above congruence implies that the line generated by $\bigl(\begin{smallmatrix} b \\ d-a \end{smallmatrix}\bigr)$ is fixed by $\bar G$.
  In other words besides $p^{n-1}U$ there is a second point $S$ in $E[p]$ such that the subgroup generated by $S$ is fixed by $G$.

  As the isogeny class of $E$ over $\QQ$ now contains a cyclic isogeny of degree $p^{n+1}\geq p^3$, we see that $p=3$ and $n=2$ from Kenku's classification~\cite{kenku}.

  We now change the basis of $E[p^n]=E[p^2]$ by taking $\{U, S'\}$ such that $p S' =S$.
  Then $G$ is in the subgroup of matrices of the form $\sm{a}{b}{0}{d}$ with $p\mid b$.
  The elements in $G$ that fix both $U$ and $S$ are of the form $\sm{1}{b}{0}{d}$ for which $p\mid b$ and $d\equiv 1 \pmod{p}$.
  The point $P$ is of the form $uU + pvS'$ for units $u$ and $v$.
  Therefore all elements that fix $U$ and $S$ also fix $P$.
  We conclude finally that $\QQ(P) \subset \QQ(U,S)$.
  Since both $\langle U\rangle$ and $\langle S\rangle $ are defined over $\QQ$ we have completed the proof in the last case, too.
\end{proof}

\begin{lem}\label{abtor4_lem}
  Let $E/\QQ$ be an elliptic curve.
  Suppose~$P$ is a point of exact order~$4$ defined over an abelian extension~$K/\QQ$.
  Then there is an cyclic isogeny on~$E$ defined over~$\QQ$ of degree~$2$.
\end{lem}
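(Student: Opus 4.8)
The plan is to reinterpret the conclusion in terms of the mod‑$2$ Galois representation. A cyclic isogeny of degree $2$ defined over $\QQ$ is the same as a nonzero $\QQ$‑rational point of order $2$, and such a point exists exactly when the image $\bar G$ of $\Gal(\overline{\QQ}/\QQ)$ in $\operatorname{Aut}(E[2])=\GL_2(\FF_2)\cong S_3$ fixes a nonzero vector, i.e. when $\bar G$ has order $1$ or $2$. So I would reduce the lemma to excluding the two cases $\bar G\cong S_3$ and $\bar G\cong C_3$. The hypothesis enters through the point $2P$: since $P$ has exact order $4$ and is defined over the abelian field $K$, the point $2P$ is a nonzero element of $E[2]$ and $\QQ(2P)\subseteq\QQ(P)\subseteq K$ is abelian, in particular Galois, over $\QQ$.

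Excluding $S_3$ is immediate. If $\bar G\cong S_3$ then it acts transitively on the three nonzero points of $E[2]$, so $\QQ(2P)$ is the fixed field of the order‑$2$ stabiliser of $2P$; this is a non‑normal cubic extension of $\QQ$, contradicting that $\QQ(2P)/\QQ$ is abelian.

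The case $\bar G\cong C_3$ is the main obstacle and uses the full hypothesis. Fix a basis $\{P,Q\}$ of $E[4]$ and view $G=\Gal(\QQ(E[4])/\QQ)$ inside $\GL_2(\ZZ/4\ZZ)$. As $\QQ(P)$ and $\QQ(i)$ are abelian, so is their compositum $\QQ(P,i)$; writing $N=\Gal(\QQ(E[4])/\QQ(P,i))=\{g\in G: g=\sm{1}{b}{0}{1}\}$ this yields $[G,G]\subseteq N$. Since $C_3$ contains no nontrivial unipotent element, $N$ reduces to the identity modulo $2$, so $N\subseteq\{I,\sm{1}{2}{0}{1}\}$. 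Next I would use complex conjugation $c\in G$: it satisfies $c^2=I$ and $\det c=-1$, so its characteristic polynomial is $x^2-1$. Because $\QQ(E[2])$ is now a totally real cyclic cubic field, $c$ acts trivially on $E[2]$, so $c\equiv I\pmod 2$; writing $c=I+2X$, the eigenvalue condition forces the reduction $\bar X\in M_2(\FF_2)$ to have eigenvalues $0$ and $1$, i.e. to be a rank‑one idempotent.

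Finally I would pick $\tau\in G$ lifting a generator of $C_3$. A short calculation gives $[c,\tau]=I+2\,(X+\tau X\tau^{-1})$, so the relation $[c,\tau]\in N$ forces $\bar X+\bar\tau\,\bar X\,\bar\tau^{-1}$ to be one of the two singular matrices $0$ or $\sm{0}{1}{0}{0}$ of $M_2(\FF_2)$. But $\bar\tau$ has order $3$ and therefore permutes the three lines of $\FF_2^2$ in a single $3$‑cycle, with no fixed line; hence $\bar\tau\,\bar X\,\bar\tau^{-1}$ is a rank‑one idempotent whose image and kernel are both distinct from those of $\bar X$. Two such idempotents sum to an invertible matrix: if $(\bar X+\bar\tau\bar X\bar\tau^{-1})v=0$ then $\bar X v=\bar\tau\bar X\bar\tau^{-1}v$ lies in the intersection of the two images, which is $0$, whence $v$ lies in the intersection of the two kernels, also $0$. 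This contradiction rules out $\bar G\cong C_3$, so $\bar G$ has order $1$ or $2$ and $E$ acquires a rational point of order $2$, giving the required degree‑$2$ isogeny. The delicate point throughout is the $C_3$ case: the mere abelianness of $\QQ(2P)$ does not suffice there, and one must combine it with the precise shape of complex conjugation on $E[4]$.
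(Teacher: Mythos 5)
Your overall strategy is in fact close to the paper's: both arguments reduce to excluding the case where the mod-$2$ image $\bar G$ is cyclic of order $3$, both work inside $G=\Gal\bigl(\QQ(E[4])/\QQ\bigr)\subseteq\GL_2(\ZZ/4\ZZ)$ with the subgroup $N=\Gal\bigl(\QQ(E[4])/\QQ(P,i)\bigr)\subseteq\bigl\{I,\sm{1}{2}{0}{1}\bigr\}$, and both exploit that complex conjugation acts trivially on $E[2]$ because $\QQ(E[2])$ is totally real in this case. Where you genuinely differ is the endgame: the paper splits into two cases ($N$ non-trivial, excluded by a normaliser computation that would force $\bar G$ to be upper-triangular; $N$ trivial, so $G$ abelian, and the centraliser of $c\sim\sm{1}{0}{0}{-1}$ reduces to the identity mod $2$), whereas your single commutator computation $[c,\tau]=I+2\bigl(\bar X+\bar\tau\bar X\bar\tau^{-1}\bigr)\in N$ handles both cases uniformly, and your invertibility argument for the sum of two rank-one idempotents with distinct images and kernels is correct. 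That is a nice streamlining.

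There is, however, one step that fails as written: the deduction that $\bar X$ is a rank-one idempotent. From $c\equiv I\pmod 2$, $c^2=I$ and $\det c=-1$ in $\GL_2(\ZZ/4\ZZ)$ one only obtains $\mathrm{tr}\,\bar X=1$; the relation $c^2=I+4(X+X^2)$ is vacuous mod $4$, so nothing forces $\det\bar X=0$. Concretely, $c=\sm{3}{2}{2}{1}$ satisfies $c^2=I$, $\det c=-1$ and $c\equiv I\pmod 2$, yet here $\bar X=\sm{1}{1}{1}{0}$ has characteristic polynomial $x^2+x+1$ and is invertible of order $3$. This is not a harmless loose end for your method: if $\bar X$ were such an order-$3$ matrix commuting with $\bar\tau$ (for instance $\bar X=\bar\tau^{-1}$), then $\bar X+\bar\tau\bar X\bar\tau^{-1}=0$ in characteristic $2$ and no contradiction follows. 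The repair is precisely the input the paper makes explicit: complex conjugation is an involution on the lattice $H_1\bigl(E(\CC),\ZZ\bigr)$ (equivalently on the Tate module $T_2E$), not merely on $E[4]$. Since it is trivial mod $2$, the lattice is rectangular and $c$ is conjugate over $\ZZ$ to $\sm{1}{0}{0}{-1}$; alternatively, writing $c=I+2X$ over $\ZZ_2$, the relation $c^2=I$ gives $X^2=-X$ exactly (as $\ZZ_2$ is torsion-free), so $\bar X$ is idempotent, and $\det c=-1$ gives $\mathrm{tr}\,\bar X=1$, hence rank one. With that one substitution your proof is complete.
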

\begin{proof}
  Let $Q=2P$.
  Since $\GL_2(\FF_2)\cong S_3$, there are three possibilities that $Q$ is defined over an abelian extension of $\QQ$.
 For the first two of the possibilities, when $\QQ(Q)$ is either of degree $1$ or of degree $2$, there is already a $2$-isogeny defined over $\QQ$.
 Therefore we assume we are in the third case, that $\QQ(Q)/\QQ$ is a cyclic cubic extension, and show that this contradicts the assumption.

  Consider $G = \Gal\bigl(\QQ(E[4])/\QQ\bigr)$ as a subgroup of $\GL_2\bigl(\ZZ/4\ZZ\bigr)$ with $P$ as the first element of the basis of $E[4]$.
  The image of $G$ in $\GL_2(\FF_2)$ is the subgroup generated by $\sm{0}{1}{1}{1}$.
  Let $H=\Gal\bigl(\QQ(E[4])/\QQ(P,i)\bigr)$ which is a normal subgroup of $G$ contained in the matrices of determinant $1$ and in those that have the form $\sm{1}{*}{0}{*}$.
  Hence $H$ is contained in the cyclic subgroup of order $2$ generated by~$\sm{1}{2}{0}{1}\in\GL_2\bigl(\ZZ/4\ZZ\bigr)$.

  If $H$ is non-trivial then $G$ must belong to the normaliser of $H$ in $\GL_2\bigl(\ZZ/4\ZZ\bigr)$, but that would mean that $G$ is contained in the stabiliser of $P$ which contradicts our assumption on the image of $G$ in~$\GL_2(\FF_2)$.

  We may suppose that $H$ is trivial and hence $\QQ\bigl(E[4]\bigr) = \QQ(P,i)$ is abelian over $\QQ$.
  Since all points in $E[2]$ are defined over $\RR$, the Néron lattice is rectangular.
  Therefore the complex conjugation must be $\sm{1}{0}{0}{-1}\in\GL_2\bigl(\ZZ/4\ZZ\bigr)$ under a suitable, possibly different choice of a basis for $E[4]$.
  All the matrices commuting with that matrix reduce to the identity matrix modulo $2$, which contradicts again our assumption on~$\QQ(Q)$.
\end{proof}

It is tempting to hope that Lemma~\ref{abtor_odd_lem} could be generalised to include $p=2$.
However this turns out to be far from possible.
The possible Galois groups of $\QQ(E[2^\infty])/\QQ$ were determined and listed in~\cite{rouse_zureickbrown}.
We find that of the 1208 possible groups only 582 satisfy the property that the field of definition of all abelian torsion point can be obtained using isogenies over $\QQ$.
In particular this fails for the three groups with the property that $\QQ(E[2])/\QQ$ is cyclic of order $3$.
But there are other more surprising examples: There is a curve with the $2$-primary torsion subgroup defined over the maximal abelian extension of $\QQ$ equal to $\ZZ/16\ZZ\oplus \ZZ/4\ZZ$.

\begin{cor}\label{delta_cor}
  Let $E/\QQ$ be an elliptic curve.
  Suppose ~$P$ is a torsion point of exact order~$t$ defined over an abelian extension~$K/\QQ$.
  Then there exists a cyclic isogeny~$E\to E'$ of degree~$p$ defined over~$\QQ$ for each odd prime divisor~$p$ of~$t$.
  Further if $t$ is even, either
  \begin{itemize}
    \item $4 \nmid t$ and the minimal discriminant~$\Delta$ is a square or
    \item there is a non-trivial cyclic isogeny~$E\to E'$ of degree~$2$ defined over~$\QQ$.
  \end{itemize}
  Moreover if~$t$ is odd then~$\QQ(P)$ is contained in a field obtained by adjoining points in kernels of cyclic isogenies defined over $\QQ$.
\end{cor}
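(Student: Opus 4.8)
The plan is to reduce everything to points of prime-power order and then feed each one into the two lemmas above. Write $p^{n}\parallel t$ for the exact power of a prime $p$ dividing $t$. Since $P$ has exact order $t$ and lies in the abelian extension $K/\QQ$, each multiple $Q_p=(t/p^{n})\,P$ has exact order $p^{n}$ and is again defined over an abelian extension of $\QQ$.

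Suppose first that $p$ is odd. Lemma~\ref{abtor_odd_lem} applied to $Q_p$ yields a cyclic isogeny $\phi\colon E\to E'$ of $p$-power degree defined over $\QQ$ (and shows that $\QQ(Q_p)$ lies in the field generated by kernel points of such isogenies; if $Q_p$ is already rational, its subgroup $\langle Q_p\rangle$ provides one). The kernel of $\phi$ is cyclic of order $p^{k}$, and its unique order-$p$ subgroup is characteristic, hence Galois-stable; dividing by it gives a cyclic isogeny of degree exactly $p$ over $\QQ$. This produces the required degree-$p$ isogeny for every odd $p\mid t$.

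Now suppose $t$ is even and consider $p=2$. If $4\mid t$, then $(t/4)\,P$ has exact order $4$ over an abelian extension, so Lemma~\ref{abtor4_lem} directly supplies a cyclic $2$-isogeny over $\QQ$, the second alternative. If $4\nmid t$, then $Q_2=(t/2)\,P$ has exact order $2$, so $\QQ(Q_2)=\QQ\bigl(x(Q_2)\bigr)$ is generated by a root of the $2$-division cubic and has degree $1$, $2$ or $3$ over $\QQ$. In degrees $1$ and $2$ the cubic acquires a rational root, giving a rational $2$-torsion point and hence a cyclic $2$-isogeny over $\QQ$. In degree $3$ the field $\QQ(Q_2)$ is an abelian, hence cyclic, cubic extension, so the $2$-division cubic is irreducible with square discriminant; since that discriminant agrees with $\Delta$ up to a square, $\Delta$ is a square and we are in the first alternative (here $\Gal\bigl(\QQ(E[2])/\QQ\bigr)\cong\ZZ/3\ZZ$ permutes the three order-$2$ subgroups transitively, so no $2$-isogeny exists). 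I expect this degree-$3$ case to be the main obstacle: one must argue that an abelian cubic field of definition forces the $2$-division cubic to be Galois, and then correctly identify its discriminant with the minimal discriminant $\Delta$ modulo squares.

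Finally, for the last assertion assume $t$ is odd. Using the Chinese remainder theorem, decompose $\langle P\rangle\cong\prod_{p\mid t}\ZZ/p^{v_p(t)}\ZZ$ and write $P=\sum_{p\mid t}P_p$, where each $P_p$ is a $\ZZ$-multiple of $P$ of exact order $p^{v_p(t)}$; then $\QQ(P)$ is the compositum of the fields $\QQ(P_p)$. Applying Lemma~\ref{abtor_odd_lem} to each $P_p$ places $\QQ(P_p)$ inside a field generated by kernel points of cyclic isogenies over $\QQ$, and passing to the compositum gives the same conclusion for $\QQ(P)$. The only subtlety beyond the degree-$3$ analysis is the descent from a cyclic $p$-power isogeny to one of degree exactly $p$ via the characteristic order-$p$ subgroup of its kernel.
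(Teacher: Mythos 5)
Your proof is correct and follows essentially the same route as the paper: decompose $P$ into prime-power components $Q_p$, apply Lemma~\ref{abtor_odd_lem} for odd $p$ and Lemma~\ref{abtor4_lem} when $4\mid t$, and in the remaining exact-order-$2$ case run the degree $1/2/3$ trichotomy, with the cubic case forcing $\Gal\bigl(\QQ(E[2])/\QQ\bigr)\cong\ZZ/3\ZZ$ and hence a square discriminant. The only cosmetic differences are that you reprove the square-discriminant criterion directly via the $2$-division cubic where the paper simply cites~\cite{serregl2}, and that you spell out the (correct, and in fact needed) descent from a cyclic $p$-power isogeny to one of degree exactly $p$ via the characteristic order-$p$ subgroup of its kernel, a step the paper leaves implicit.
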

\begin{proof}
  First, $P$ can be written as a linear combination of torsion points $Q_p$ over abelian extensions whose orders are powers of $p\mid t$.
  From the above proof we learn that the only option for $Q_2$ to be defined over an abelian extension without having a rational $2$-isogeny is when $\QQ\bigl(E[2]\bigr)/\QQ$ is a cyclic extension of degree~$3$.
  It is known that this only occurs when the discriminant $\Delta$ is a square, see for instance~5.3a) in~\cite{serregl2}.

  Therefore, if $t$ is odd or $\Delta$ is not a square then the previous two lemmas imply the existence of $p$-isogenies defined over $\QQ$.
  The last sentence is a consequence of Lemma~\ref{abtor_odd_lem} and $\QQ(P)= \QQ\bigl(\{Q_p \bigm\vert p\mid t\}\bigr)$.
\end{proof}

\section{Integrality of modular symbols}\label{modsym_sec}

We record here an auxiliary result that may be of independent interest.
Recall first that modular symbols are the unique rational numbers such that
\[
  \lambda(r) = [r]^+\cdot \Omega_{+}(E) + [r]^{-}\cdot \Omega_{-}(E)
\]
for any $r\in\QQ$.

\begin{prop}\label{modsym_prop}
  Let $E/\QQ$ be an elliptic curve which does not admit any non-trivial isogenies defined over $\QQ$.
  Assume that the Manin constant conjecture $c_0=1$ holds.
  Then~$[r]^\pm$ belongs to~$\tfrac{1}{4}\ZZ$ for any~$r\in\QQ$.
  Furthermore, if the minimal discriminant $\Delta$ is not a square then~$[r]^\pm \in \tfrac{1}{2}\ZZ$.
\end{prop}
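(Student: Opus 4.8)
The plan is to read off the denominator of $[r]^{\pm}$ from the order of the torsion point attached to the cusp $r$, and then to bound that order using the absence of rational isogenies. Fix $r\in\QQ$ and consider the point $P^0_r=\varphi_0(r)\in E(\overline{\QQ})$. Because we assume $c_0=1$, under the uniformisation $E(\CC)\cong\CC/\Lambda$ this point is precisely $\lambda(r)+\Lambda$. By the theorem of Manin and Drinfeld the image of every cusp is a torsion point, so $P^0_r$ has some exact order $t$; equivalently $\lambda(r)\in\tfrac{1}{t}\Lambda$ with $t$ minimal for this property. Moreover, as recorded after Lemma~\ref{pr_lem}, each $P^0_r$ is defined over the cyclotomic field $\QQ(\zeta_{\delta})$, hence over an abelian extension of $\QQ$.

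First I would invoke Corollary~\ref{delta_cor} for the torsion point $P^0_r$ of exact order $t$ lying in an abelian extension. Since $E$ admits no non-trivial isogeny over $\QQ$, in particular it has no cyclic $p$-isogeny over $\QQ$ for any prime $p$. By the corollary the absence of such isogenies for odd $p$ forces $t$ to have no odd prime divisor, so $t$ is a power of $2$; and the absence of a rational cyclic $2$-isogeny forces, whenever $t$ is even, that $4\nmid t$ and that the minimal discriminant $\Delta$ is a square. Combining these, $t\in\{1,2\}$, and $t=2$ can occur only when $\Delta$ is a square. In particular, when $\Delta$ is not a square we must have $t=1$, i.e. $\lambda(r)\in\Lambda$.

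It then remains to convert the membership $\lambda(r)\in\tfrac{1}{t}\Lambda$ into a bound on the denominators of $[r]^+$ and $[r]^-$, working uniformly in the two possible shapes of $\Lambda$. Using the two displayed descriptions of the real and imaginary parts of lattice elements from Section~\ref{geom_sec}, namely $\{\re(z):z\in\Lambda\}=\tfrac{1}{2}\Omega_+(E)\,\ZZ$ and $\{\im(z)\,i:z\in\Lambda\}\subseteq\tfrac{1}{2}\Omega_-(E)\,\ZZ$, together with the definitions $[r]^+=\re(\lambda(r))/\Omega_+(E)$ and $[r]^-=\im(\lambda(r))\,i/\Omega_-(E)$, one checks directly that $\lambda(r)\in\Lambda$ gives $[r]^{\pm}\in\tfrac{1}{2}\ZZ$ while $\lambda(r)\in\tfrac{1}{2}\Lambda$ gives $[r]^{\pm}\in\tfrac{1}{4}\ZZ$. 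Feeding in the dichotomy from the previous step yields $[r]^{\pm}\in\tfrac{1}{4}\ZZ$ in general and $[r]^{\pm}\in\tfrac{1}{2}\ZZ$ when $\Delta$ is not a square, as claimed.

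The substantive content all sits in the earlier results, so I expect no serious obstacle in the argument itself. The two points requiring care are the passage from ``$P^0_r$ has exact order $t$'' to ``$\lambda(r)\in\tfrac{1}{t}\Lambda$'', which relies squarely on the hypothesis $c_0=1$ so that $P^0_r$ corresponds to $\lambda(r)$ rather than to $c_0\,\lambda(r)$, and the requirement to verify the denominator bounds simultaneously for the rectangular and the non-rectangular lattice. The non-rectangular lattice is the delicate case, since there both $[r]^+$ and $[r]^-$ are only half-integral rather than one of them being integral; the displayed facts above are exactly what is needed to see that no further factor of $2$ is lost.
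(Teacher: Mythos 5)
Your proposal is correct and takes essentially the same approach as the paper: both identify $P^0_r=\varphi_0(r)$ as a torsion point defined over an abelian extension, use the absence of rational isogenies to force its exact order $t$ into $\{1,2\}$ (with $t=2$ possible only when $\Delta$ is a square), and then convert $\lambda(r)\in\tfrac{1}{t}\Lambda$ into the stated denominator bounds via the descriptions of $\re(z)$ and $\im(z)$ for $z\in\Lambda$. The only cosmetic difference is that you route both conclusions through Corollary~\ref{delta_cor}, whereas the paper invokes Lemmas~\ref{abtor_odd_lem} and~\ref{abtor4_lem} directly for the order-at-most-$2$ step and reserves the corollary for the non-square-discriminant refinement.
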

\begin{proof}
  Consider the image $P_r$ of the cusp $r\in X_0(N)$ under the modular parametrisation $\varphi_0$.
  Lemma~\ref{pr_lem} implies that $P_r$ is a torsion point defined over an abelian extension.
  From the Lemmas~\ref{abtor_odd_lem} and~\ref{abtor4_lem}, we conclude that $P_r$ has either order $2$ or $P_r=O$.
  Hence the endpoint of the path $\gamma$ in the definition~\eqref{modsym_eq} of $\lambda(r)$ ends at a point in $E[2]$.
  So $\lambda(r)\in \tfrac{1}{2}\Lambda$. Now by ~\eqref{reallat_eq} and ~\eqref{imlat_eq} we obtain~$[r]^\pm\in\tfrac{1}{4}\ZZ$.

  If $\Delta$ is not a square, then Corollary~\ref{delta_cor} shows that $P_r$ has to be $O$ and hence~$\lambda(r)\in\Lambda$.
\end{proof}

We recall here that one can use the modular parametrisation $\varphi_0$ to prove the following, as per our comment after Lemma~\ref{pr_lem}.

\begin{prop}
  Let $E/\QQ$ be a semistable $X_0$-optimal elliptic curve with no non-trivial torsion point defined over~$\QQ$.
  Then~$[r]^\pm \in \tfrac{1}{2}\ZZ$ for all~$r\in\QQ$.
\end{prop}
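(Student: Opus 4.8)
The plan is to reduce everything to showing that the image $P^0_r = \varphi_0(r) \in E(\CC)$ of each cusp $r$ under the $X_0(N)$-parametrisation is the identity, and then to read off the statement from the explicit shape of the Néron lattice $\Lambda$. Recall that under the isomorphism $E(\CC) \cong \CC/\Lambda$ the point $P^0_r$ corresponds to $c_0 \cdot \lambda(r) + \Lambda$, so $P^0_r = O$ is equivalent to $c_0 \cdot \lambda(r) \in \Lambda$.

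First I would show $P^0_r = O$ for every $r \in \QQ$. By the theorem of Manin and Drinfeld the modular symbols $[r]^{\pm}$ are rational, equivalently $c_0 \cdot \lambda(r)$ lies in $\QQ \otimes \Lambda$; this says precisely that $P^0_r$ is a torsion point of $E$. Next, since $E$ is semistable its conductor $N$ is squarefree, so writing $r = a/m$ and setting $D = \gcd(m,N)$ we have $\delta = \gcd(D, N/D) = 1$ for every modulus $m$. Hence, by the remark following Lemma~\ref{pr_lem} (which rests on Theorem~1.3.1 in~\cite{stevens}), each $P^0_r$ is defined over $\QQ(\zeta_\delta) = \QQ$. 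Thus $P^0_r$ is a rational torsion point, and since $E$ has no non-trivial rational torsion by hypothesis, we conclude $P^0_r = O$.

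It then remains to convert this into integrality of the modular symbols. As $E$ is semistable and $X_0$-optimal, the Manin constant satisfies $c_0 = 1$ by~\cite{ces}, so $P^0_r = O$ gives $\lambda(r) = c_0 \cdot \lambda(r) \in \Lambda$ for all $r$. I would finish by expanding $\lambda(r) = [r]^{+}\,\Omega_{+}(E) + [r]^{-}\,\Omega_{-}(E)$ and using that $\Omega_{+}(E)$ is real while $\Omega_{-}(E)$ is purely imaginary. Membership $\lambda(r) \in \Lambda$ then forces $[r]^{\pm} \in \tfrac12\ZZ$ in both the rectangular case ($c_{\infty} = 2$, where $\Lambda = \tfrac12\Omega_{+}(E)\,\ZZ \oplus \Omega_{-}(E)\,\ZZ$) and the non-rectangular case ($c_{\infty} = 1$), the factor $\tfrac12$ being exactly the half-period occurring in the generators of $\Lambda$.

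The argument is short because the serious input is already available: Manin--Drinfeld for torsion, Stevens's description of the Galois action on cusps for rationality, and~\cite{ces} for $c_0 = 1$. The one step that genuinely needs care is the rationality claim, namely that semistability really forces $\delta = 1$ and hence $P^0_r \in E(\QQ)$ for \emph{all} cusps $r$, not merely for those $r = a/m$ coming from a fixed character. This is where semistability (as opposed to mere absence of rational isogenies, as in Proposition~\ref{modsym_prop}) is indispensable: it is precisely what removes the intermediate possibility $P^0_r \in E[2]$ and thereby upgrades the $\tfrac14\ZZ$ conclusion of Proposition~\ref{modsym_prop} to $\tfrac12\ZZ$.
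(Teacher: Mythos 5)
Your proof is correct and follows essentially the same route as the paper's: invoke \cite{ces} for $c_0=1$ in the semistable $X_0$-optimal case, note that semistability forces $\delta=1$ so that Stevens's Theorem~1.3.1 puts every $P^0_r$ in $E(\QQ)$, conclude $P^0_r=O$ from the absence of rational torsion, and read off $[r]^\pm\in\tfrac12\ZZ$ from $\lambda(r)\in\Lambda$ and the two possible shapes of the N\'eron lattice. The only difference is that you spell out the intermediate steps (Manin--Drinfeld for torsionness, the lattice computation) that the paper's two-line proof leaves implicit.
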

\begin{proof}
  The Manin constant conjecture $c_0=1$ for $E$ is known in the semistable case by~\cite{ces}.

  By the remark after Lemma~\ref{pr_lem}, all points $P^0_r=\varphi_0(r)$ must be defined over $\QQ$, but since there are no torsion points defined over $\QQ$, we get that~$P^0_r=O$.
  As before~$\lambda(r)\in\Lambda$ and thus~$[r]^\pm \in \tfrac{1}{2}\ZZ$ for all~$r\in\QQ$. 
\end{proof}

It is easy to find examples, even of semistable curves, which have denominator $2$ among the modular symbols:
For instance the curve, labelled 43a1 in Cremona's table~\cite{cremona}, has $\bigl[\tfrac{1}{5}\bigr]^{+} =\bigl[\tfrac{1}{5}\bigr]^{-}=\tfrac{1}{2}$, despite having no isogenies defined over $\QQ$.
Proposition~\ref{modsym_prop} does not rule out that there are examples with denominator $4$.
However it seems very difficult to find any such examples, if they exist at all.

\section{Integrality of the modular \texorpdfstring{$L$}{L}-values}\label{mainthm_sec}
In this short section we prove our main integrality result for $\LLa(E,\chi)$ by combining the results from the previous sections.

\begin{thm}\label{la_thm}
  Let $E/\QQ$ be an elliptic curve of conductor $N$ and $\chi$ a non-trivial character of order $d$ and conductor $m$.
  Assume that $c_1(E)=1$ as conjectured.

  Suppose that $\LLa(E,\chi)\not\in\ZZ[\zeta_d]$.
  Then the minimal discriminant $\Delta$ is a square or there is a cyclic isogeny $E\to E'$ defined over $\QQ$.
  Moreover either
  \begin{itemize}
    \item $c_0>1$ and $m\mid N$ or
    \item $m^2 \mid N$.
  \end{itemize}
  Finally, the field $K_\chi$ fixed by the kernel of $\chi$ is contained in the extension of $\QQ$ obtained by adjoining the points of the kernels of all cyclic isogenies defined over $\QQ$ and all torsion points of order a power of $2$ that are defined over an abelian extension of $\QQ$.
\end{thm}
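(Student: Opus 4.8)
The plan is to argue by contraposition, assembling the integrality criteria and torsion-point structure results from Sections~\ref{geom_sec}--\ref{tors_sec}. Throughout I assume $c_1(E)=1$ and $\LLa(E,\chi)\notin\ZZ[\zeta_d]$, treating $\chi$ as the primitive character of conductor $m$ and order $d$, and I aim to read off each conclusion of the theorem from the contrapositive of a previously established integrality statement.

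First I would pin down the divisibility conditions on $m$. By Proposition~\ref{x1_prop}, non-integrality already forces $m\mid N$. I then split on the Manin constant: if $c_0=1$, Proposition~\ref{mneqdelta_prop} shows that $m^2\nmid N$ would give integrality, so in fact $m^2\mid N$; if instead $c_0>1$, then the condition $m\mid N$ just obtained places us in the first bullet. This disposes of the ``Moreover either\dots'' dichotomy with no further work.

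Next, since $m\mid N$, Lemma~\ref{pr_lem} tells us that each point $P_{a/m}=\varphi_1(a/m)$ is a torsion point defined over the abelian field $\QQ(\zeta_m)$. Taking the contrapositive of Proposition~\ref{kchi_prop}, non-integrality forces $K_\chi\subset\QQ(P_{1/m})$. As $\chi$ is non-trivial we have $K_\chi\neq\QQ$, so $\QQ(P_{1/m})\neq\QQ$ and hence $P_{1/m}$ is a non-trivial torsion point of some exact order $t\geq 2$, not defined over $\QQ$, lying in an abelian extension. Applying Corollary~\ref{delta_cor} to $P_{1/m}$ then yields the first claim of the theorem: every odd prime divisor of $t$ produces a cyclic isogeny over $\QQ$, and if $t$ is even we obtain either a square discriminant $\Delta$ or a rational $2$-isogeny; in every case $\Delta$ is a square or a cyclic isogeny $E\to E'$ over $\QQ$ exists.

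Finally, for the last sentence I would decompose $P_{1/m}$ into its $p$-primary parts $Q_p$, each still defined over an abelian extension since $\QQ(P_{1/m})\subset\QQ(\zeta_m)$. For odd $p$, Lemma~\ref{abtor_odd_lem} places $\QQ(Q_p)$ inside the field generated by the kernels of cyclic $p$-isogenies defined over $\QQ$; the $2$-primary part $Q_2$ contributes its own field, which is exactly the ``torsion of $2$-power order defined over an abelian extension'' term in the statement. Since $K_\chi\subset\QQ(P_{1/m})=\QQ\bigl(\{Q_p\mid p\mid t\}\bigr)$, this proves the containment. The one genuinely delicate point, and the main obstacle to a cleaner statement, is the prime $2$: Lemma~\ref{abtor_odd_lem} fails for $p=2$, and as the discussion after Lemma~\ref{abtor4_lem} shows, abelian $2$-power torsion need not come from rational isogenies at all. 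This is precisely why the $2$-power torsion cannot be absorbed into the isogeny field and must be carried along explicitly in the final clause.
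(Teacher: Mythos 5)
Your proposal is correct and takes essentially the same route as the paper's proof, which likewise combines Proposition~\ref{x1_prop} and Proposition~\ref{mneqdelta_prop} for the divisibility dichotomy, Proposition~\ref{kchi_prop} with Lemma~\ref{pr_lem} and Corollary~\ref{delta_cor} for the isogeny/square-discriminant claim, and Lemma~\ref{abtor_odd_lem} for the final containment of $K_\chi$. Your write-up merely makes explicit a few small steps the paper leaves implicit, such as the non-triviality of $P_{1/m}$ and the $p$-primary decomposition of its field of definition.
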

In the last section, we will give examples explaining why these conditions can not be weakened.

\begin{proof}
  By Proposition~\ref{kchi_prop}, we know that $K_{\chi}$ is contained in the field of definition of $P_{1/m}$, where $P_{1/m}=\varphi_1(\tfrac{1}{m})$.
  This point is defined over an abelian extension by Lemma~\ref{pr_lem}.
  Corollary~\ref{delta_cor} implies that $\Delta$ is a square or there is a cyclic isogeny defined over $\QQ$.
  Proposition~\ref{x1_prop} proves that $m\mid N$.
  Proposition~\ref{mneqdelta_prop} shows that either $m^2\mid N$ or $c_0>1$.
  The statement about $K_\chi$ is a consequence of Lemma~\ref{abtor_odd_lem}.
\end{proof}

\section{Comparison of the \texorpdfstring{$L$}{L}-functions}\label{lfn_sec}
In this section, we compare the motivic definition of the $L$-function to the arithmetic definition.
This allows us to prove our main theorems.

We may view a primitive Dirichlet character $\chi$ as usual as a character on the absolute Galois group of $\QQ$ via setting $\chi(\Fr_p) = \chi(p)$ where $\Fr_p$ is an arithmetic Frobenius element for a prime $p\nmid m$.
For any prime $\ell$, let $\mathcal{F}=\QQ_{\ell}(\zeta_d)$ and let $V_\chi$ be the $1$-dimensional $\mathcal{F}$-vector space on which the absolute Galois group acts by $\chi$.
Let $V_E$ be the dual of $T_{\ell}E \otimes_{\ZZ_{\ell}} \QQ_{\ell}$ where $T_{\ell} E$ is the Tate-module of $E$.

Given such a compatible system of Galois representations $V$, we define its $L$-function as usual by
\[
  L(V,s) = \prod_p \det\Bigl( 1- \Fr_p^{-1} p^{-s} \Bigm\vert V^{I_p} \Bigr)^{-1}
\]
where $I_p$ is the inertia subgroup for $p$ and for each prime $p$ we take $V$ with respect to any prime $\ell\neq p$.
For our definitions of $V_{\chi}$ and $V_E$ above, we obtain $L(\bar\chi,s) = L(V_\chi,s) = \sum_{n\geq 1} \chi(n)\, n^{-s}$ and $L(E,s) = L(V_E,s) = \sum_{n\geq 1} a_n\, n^{-s}$.
We set $L(E,\chi,s) = L(V_E\otimes V_\chi, s)$.

Recall that $\La(E,\chi,s) = \sum_n a_n \chi(n) n^{-s}$, but $\LLa(E,\chi)$ is obtained from $\La(E,\bar\chi,s)$.
The following is not difficult to prove by looking at each local factor in the product.
\begin{lem} \label{la_lemma}
  If $E$ does not have semistable reduction at any prime of $K_\chi$ above a prime $p$ of additive reduction over $\QQ$, then $L(E,\chi,s) = \La(E,\bar\chi,s)$.
\end{lem}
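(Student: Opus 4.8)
The plan is to prove the equality of the two Euler products
\[
  L(E,\chi,s)=\prod_p \det\bigl(1-\Fr_p^{-1}p^{-s}\bigm\vert (V_E\otimes V_\chi)^{I_p}\bigr)^{-1}
  \qquad\text{and}\qquad
  \La(E,\bar\chi,s)=\prod_p L_p^{\mathrm a}(s)
\]
factor by factor at every prime $p$, where $L_p^{\mathrm a}(s)=\bigl(1-a_p\bar\chi(p)\,p^{-s}+\bar\chi(p)^2p^{1-2s}\bigr)^{-1}$ at good $p$, equals $\bigl(1-a_p\bar\chi(p)\,p^{-s}\bigr)^{-1}$ at multiplicative $p$ and $1$ at additive $p$, with the convention $\bar\chi(p)=0$ whenever $p\mid m$. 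The complex conjugate $\bar\chi$ enters naturally because the geometric Frobenius $\Fr_p^{-1}$ acts on $V_\chi$ by $\chi(\Fr_p)^{-1}=\chi(p)^{-1}=\bar\chi(p)$; so the first step is to record this and to observe that for good $p\nmid m$ the eigenvalues $\alpha_p,\beta_p$ of $\Fr_p^{-1}$ on $V_E$ satisfy $\alpha_p+\beta_p=a_p$ and $\alpha_p\beta_p=p$, whence the local factor of $L(E,\chi,s)$ is precisely $L_p^{\mathrm a}(s)$.

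Next I would dispose of all remaining primes except those of additive reduction dividing $m$. The key computation is that, writing $V_\chi=\QQ\cdot e$ with $\sigma e=\chi(\sigma)e$, a vector $w\otimes e$ is $I_p$-invariant exactly when $\sigma(w)=\chi(\sigma)^{-1}w$ for all $\sigma\in I_p$, so that
\[
  (V_E\otimes V_\chi)^{I_p}=V_E\bigl[\chi^{-1}\vert_{I_p}\bigr],
\]
the subspace of $V_E$ on which inertia acts through the character $\chi^{-1}\vert_{I_p}$. When $p\nmid m$ the character $\chi$ is unramified at $p$, this space is $V_E^{I_p}\otimes V_\chi$, and matching $L_p^{\mathrm a}(s)$ is immediate from the untwisted Euler factor together with the N\'eron--Ogg--Shafarevich criterion: for good or multiplicative $p$ one recovers the expected factor, while for additive $p$ one has $V_E^{I_p}=0$ and $a_p=0$, so both factors equal $1$. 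When $p\mid m$ but $E$ has good or multiplicative reduction, the only inertia eigencharacter occurring in $V_E$ is trivial, whereas $\chi^{-1}\vert_{I_p}$ is ramified; hence $V_E[\chi^{-1}\vert_{I_p}]=0$ and the local factor is $1=L_p^{\mathrm a}(s)$.

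This leaves the one genuinely delicate case: a prime $p$ of additive reduction with $p\mid m$, where $L_p^{\mathrm a}(s)=1$ and I must show that $(V_E\otimes V_\chi)^{I_p}=V_E[\chi^{-1}\vert_{I_p}]$ also vanishes. The decisive point will be that every vector in $V_E[\chi^{-1}\vert_{I_p}]$ is fixed by $\ker\bigl(\chi\vert_{I_p}\bigr)$, so that
\[
  V_E\bigl[\chi^{-1}\vert_{I_p}\bigr]\subseteq V_E^{\ker(\chi\vert_{I_p})}.
\]
Since $K_\chi$ is the fixed field of $\ker\chi$, the group $\ker\bigl(\chi\vert_{I_p}\bigr)$ is exactly the inertia subgroup at a prime $\mathfrak p$ of $K_\chi$ above $p$, so by N\'eron--Ogg--Shafarevich the space $V_E^{\ker(\chi\vert_{I_p})}$ is non-zero if and only if $E$ acquires good or multiplicative---that is, semistable---reduction at $\mathfrak p$ over $K_\chi$. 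The hypothesis of the lemma excludes precisely this, forcing $V_E^{\ker(\chi\vert_{I_p})}=0$ and hence $(V_E\otimes V_\chi)^{I_p}=0$. I expect this identification of the twisted inertia invariants with the reduction type of $E$ over $K_\chi$ to be the only real content of the argument; everything else is bookkeeping over Euler factors. Once each local factor is matched, the two Euler products agree and $L(E,\chi,s)=\La(E,\bar\chi,s)$.
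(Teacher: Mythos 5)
Your proof is correct and is exactly the argument the paper intends: the paper merely asserts the lemma ``is not difficult to prove by looking at each local factor'', and your prime-by-prime comparison, with the key identification $(V_E\otimes V_\chi)^{I_p}=V_E\bigl[\chi^{-1}\vert_{I_p}\bigr]\subseteq V_E^{\ker(\chi\vert_{I_p})}$ and the observation that $\ker(\chi\vert_{I_p})$ is the inertia group of $K_\chi$ at a prime above $p$, fills in precisely those details. The only cosmetic point is that $V_\chi$ should be a $\QQ_\ell$- (or $\QQ_\ell(\zeta_d)$-) vector space rather than ``$\QQ\cdot e$'', which does not affect the argument.
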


As a consequence of modularity, we obtain that $L(E,\chi,s)$ admits an analytic continuation to all $s\in \CC$.

\begin{proof}[Proof of Theorem~\ref{semistable_thm} and Theorem~\ref{main_thm}]
If no prime of additive reduction divides $m$, then $m^2\nmid N$ and so $c_0=1$ implies that $\LLa(E,\chi)\in\ZZ[\zeta_d]$ by Theorem~\ref{la_thm}.
Part b) of Theorem~\ref{main_thm} then follows from the above lemma as it implies $\LL(E,\chi)= \LLa(E,\chi)$.

If no prime of bad reduction divides $m$ then $m\nmid N$ and so $c_1=1$ implies part a) of Theorem~\ref{main_thm}.
As mentioned before, Theorem~\ref{semistable_thm} is a consequence of Theorem~\ref{main_thm} and the fact that we know $c_0=1$ for the $X_0$-optimal curve as shown in~\cite{ces}.
\end{proof}

The case when the above lemma does not apply is trickier; the two Euler products differ by a finite number of local factors.
We have
\[
  \LL(E,\chi) = \LLa(E,\chi)\cdot \prod_{p \in S} \mathfrak{C}(E,\chi,p)
\]
where $S$ is the set of primes $p$ for which the reduction becomes semistable over $K_\chi$ and the correction factor $\mathfrak{C}(E,\chi,p)$ is the local factor of the Euler product of $L(E,\chi,s)$ at $p$ evaluated at $s=1$.

For instance if $\chi$ is a quadratic character and $E$ achieves good reduction at $p$, then $\mathfrak{C}(E,\chi,p)$ is $p/N_p$ where $N_p$ is the number of points on the reduction of the base-change $E\times \Spec(K_\chi)$ to $K_\chi$ at the unique prime above $p$.
It is therefore not clear that $\LL(E,\chi)$ is integral as these correction factors may introduce new denominators.
In fact, it is not even obvious that the value of $\LL(E,\chi)$ is still in the correct field.
However, Vladimir Dokchitser kindly provided us with the argument to complete this.

\begin{prop}
  For any Dirichlet character $\chi$ of order $d$, we have $\LL(E,\chi)\in\QQ(\zeta_d)$.
\end{prop}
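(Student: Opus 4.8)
The plan is to reduce the statement to the correction factors. Since $\LL(E,\chi)=\LLa(E,\chi)\cdot\prod_{p\in S}\mathfrak{C}(E,\chi,p)$, and the rationality of modular symbols (the theorem of Manin and Drinfeld) together with formula~\eqref{birch2_eq} already give $\LLa(E,\chi)=\sum_{a}\chi(a)\,[\tfrac{a}{m}]^{\epsilon}\in\QQ(\zeta_d)$, it suffices to prove that each correction factor $\mathfrak{C}(E,\chi,p)$ lies in $\QQ(\zeta_d)$. I would first observe that every $p\in S$ divides $m$: additive reduction is preserved under unramified base change, so $E$ can only become semistable over $K_\chi$ at primes where $K_\chi/\QQ$ is ramified, and as $K_\chi\subseteq\QQ(\zeta_m)$ these are exactly the primes dividing $m$. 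In particular $\chi$ is ramified at every $p\in S$, consistent with $\chi(p)=0$ and with the local factor of $\La(E,\bar\chi,s)$ being trivial there.

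Next I would treat the correction factor as a local Euler factor of the motivic $L$-function, namely $\mathfrak{C}(E,\chi,p)^{-1}=\det\bigl(1-\Fr_p^{-1}p^{-1}\bigm\vert (V_E\otimes V_\chi)^{I_p}\bigr)$, a symmetric function of the eigenvalues of Frobenius on the inertia invariants. The key step is a Galois descent argument. For $\sigma\in\Gal(\overline{\QQ}/\QQ)$ with $\sigma(\zeta_d)=\zeta_d^{t}$, I would show $\sigma\bigl(\mathfrak{C}(E,\chi,p)\bigr)=\mathfrak{C}(E,\chi^{t},p)$. Here $V_E$ is defined over $\QQ$, so its local Frobenius characteristic polynomials and the action of $I_p$ are $\sigma$-stable, while $V_\chi$ carries a $\QQ(\zeta_d)$-rational structure on which $\sigma$ acts through the coefficients and sends $V_\chi$ to $V_{\chi^{t}}$. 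Since the formation of inertia invariants and of the Frobenius characteristic polynomial commutes with this coefficient-field automorphism, the local factor of $V_E\otimes V_\chi$ is carried to that of $V_E\otimes V_{\chi^{t}}$. Equivalently, $V_E\otimes V_\chi$ is the $\ell$-adic realisation of a motive over $\QQ$ with coefficients in $\QQ(\zeta_d)$, so its Euler factors already lie in $\QQ(\zeta_d)[T]$.

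Applying this with any $\sigma$ fixing $\QQ(\zeta_d)$, so that $t\equiv 1\pmod d$ and $\chi^{t}=\chi$, yields $\sigma\bigl(\mathfrak{C}(E,\chi,p)\bigr)=\mathfrak{C}(E,\chi,p)$; as $\mathfrak{C}(E,\chi,p)$ is manifestly algebraic, it therefore lies in $\QQ(\zeta_d)$. Multiplying the finitely many correction factors by $\LLa(E,\chi)\in\QQ(\zeta_d)$ then gives $\LL(E,\chi)\in\QQ(\zeta_d)$.

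The hard part will be making the coefficient-field action on $V_E\otimes V_\chi$ rigorous precisely at the additive primes $p\in S$, where $\chi|_{I_p}$ is non-trivial and possibly wildly ramified: the inertia-invariant subspace is cut out by matching the additive-reduction action of $I_p$ on $V_E$ against $\chi^{-1}|_{I_p}$, and one must check that both this subspace and the Frobenius action on it transform compatibly under $\sigma$, since a priori the Frobenius eigenvalues of the additive-reduction representation need not be rational. A concrete alternative, probably closest to Dokchitser's suggestion, is to pass to $K_\chi$ and use Artin formalism $L(E/K_\chi,s)=\prod_{\psi}L(E,\psi,s)$ over $\psi\in\widehat{\Gal(K_\chi/\QQ)}$: the local factors of $L(E/K_\chi,s)$ are built from $\QQ(\zeta_d)$-stable data, and $\Gal(\overline{\QQ}/\QQ(\zeta_d))$ merely permutes the conjugate characters $\chi^{t}$ while fixing $\chi$, so extracting the $\chi$-isotypic local factor exhibits $\mathfrak{C}(E,\chi,p)$ inside $\QQ(\zeta_d)$ directly. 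Either route reduces the proposition to the behaviour of local Euler factors under conjugation of the twisting character, which is the crux.
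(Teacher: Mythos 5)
Your reduction and overall strategy are sound, and your route is genuinely different in organisation from the paper's. The paper never invokes a conjugation formula $\sigma\bigl(\mathfrak{C}(E,\chi,p)\bigr)=\mathfrak{C}(E,\chi^{t},p)$; instead it argues explicitly at each bad prime: by the Weil pairing the determinant of $V_E$ is trivial on $I_p$ (for $\ell\neq p$), so when $E$ acquires good reduction above $p$ in $K_\chi$ the inertia group acts on $V_E$ through $\chi$ and $\chi^{-1}$, hence diagonally on $V_E\otimes\QQ_\ell(\zeta_d)$; since $(K_\chi)_{\mathfrak p}\,\QQ_p^{\mathrm{nr}}/\QQ_p$ is abelian, $\Fr_p$ commutes with the inertia action and is diagonal in the same basis, so the characteristic polynomial of $\Fr_p$ has roots in $\QQ_\ell(\zeta_d)$ \emph{for every} $\ell$; and since the local factor of $L(E,\chi,s)$ at $p$ divides that of $L(E/K_\chi,s)$, these roots are algebraic (Weil numbers), which together pins the factor inside $\QQ(\zeta_d)[T]$. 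The multiplicative case is handled by replacing $V_E$ with its inertia invariants, where $\chi$ is quadratic. Note that your Artin-formalism fallback via $L(E/K_\chi,s)=\prod_\psi L(E,\psi,s)$ is essentially the paper's divisibility step, so the true divergence is descent-by-conjugation versus explicit simultaneous diagonalization.

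There is, however, a genuine soft spot at exactly the point you flagged. First, ``manifestly algebraic'' is not manifest: the Frobenius eigenvalues on $(V_E\otimes V_\chi)^{I_p}$ live a priori in $\overline{\QQ_\ell}$, and their algebraicity is precisely what the divisibility into the rational local factor of $L(E/K_\chi,s)$ buys you. Second, and more seriously, your descent made rigorous at a single prime $\ell$ only gives the semilinear equivariance $\tau(P_\chi)=P_{\tau\circ\chi}$ for $\tau\in\Gal\bigl(\QQ_\ell(\zeta_d)/\QQ_\ell\bigr)$, i.e.\ for the decomposition subgroup generated by $\ell\bmod d$, not for an arbitrary $\sigma\in\Gal(\overline{\QQ}/\QQ(\zeta_d))$, which need not act continuously on $\QQ_\ell(\zeta_d)$ over $\QQ_\ell$. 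Consequently, extracting the $\chi$-isotypic factor at one $\ell$ only places its (algebraic) coefficients in $\overline{\QQ}\cap\QQ_\ell(\zeta_d)$, an infinite extension of $\QQ(\zeta_d)$ --- not enough to conclude. To close the gap you must either run the argument at all $\ell$ simultaneously and know the $\ell$-independence of the factor (this is in effect what the paper's diagonalization delivers: roots algebraic and lying in $\QQ_\ell(\zeta_d)$ for every $\ell$), or appeal to local--global compatibility for the twisted modular form, which is considerably heavier machinery than the paper's self-contained computation.
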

\begin{proof}
The Manin-Drinfeld theorem~\cite{manin, drinfeld} implies that the modular symbols are rational numbers.
Putting this theorem together with Birch's formula~\eqref{birch_eq} we obtain that $\LLa(E,\chi)\in \QQ(\zeta_d)$.

  Let $\chi$ be a Dirichlet character of order $d$ such that the local factor of the Euler product at a prime $p$ for $L(E,\chi,s)$ is non-trivial, while the factor for $L(E,s)$ is trivial.
  So $E$ is an elliptic curve with additive reduction at~$p$, yet does not have additive reduction over $K_\chi$ any more.

Assume first that $E$ has good reduction at primes above $p$ in $K_\chi$.
  Pick a prime $\mathfrak p$ above $p$ in $K_\chi$ and fix $\ell\neq p$.
  The inertia group $I_{\mathfrak p}$ inside the Galois group of $K_\chi/\QQ$ acts on the $\mathcal{F}$-vector space $V_E\otimes \mathcal{F}$ through the character $\chi$ and its inverse $\chi^{-1}$ as the determinant must be trivial by the Weil pairing.
  Therefore the action of $I_{\mathfrak p}$ on $V_E\otimes \mathcal{F}$ will be diagonal for a suitable basis.

  If $\QQ_p^{\mathrm{nr}}$ denotes the maximal unramified extension of $\QQ_p$, then $(K_\chi)_{\mathfrak p}\,\QQ_p^{\mathrm{nr}}/\QQ_p$ is abelian.
  Therefore the action of $\Fr_p$ on $V_E$ commutes with the action by $I_{\mathfrak p}$.
  We conclude that $\Fr_p$ is also diagonal on $V_E\otimes \mathcal{F}$ and hence the characteristic polynomial of $\Fr_p$ has roots in $\mathcal{F}=\QQ_{\ell}(\zeta_d)$ for all $\ell$.
  Since the local factor of the Euler product of $L(E,\chi,s)$ at $p$ is a factor of the one of $L(E/K_\chi,s)$, the correction term $\mathfrak{C}(E,\chi,p)$ is the evaluation of a polynomial over $\QQ(\zeta_d)$ at $p^{-1}$.

  The case when $E$ acquires multiplicative reduction over $K_\chi$ works the same if $V_E$ in the above argument is replaced by its subspace fixed by the inertia subgroup of $K_\chi$ at $p$.
  We note that since $\chi$ is quadratic in this case, the argument before the statement of this proposition also applies.
\end{proof}

\begin{proof}[Proof of Corollary~\ref{main_cor}]
  Let $E^{\psi}$ be the quadratic twist of $E$ that is semistable and denote by $\psi$ the corresponding quadratic character.
  Then $\LL(E,\chi) = \LL(E^{\psi}, \chi\cdot \psi)$ follows from Artin formalism.
  Therefore it is enough to show the corollary for the semistable curve $E^{\psi}$.

  By Theorem~\ref{la_thm}, the set of characters $\chi$ for which $\LLa(E^{\psi},\chi)$ is not integral is finite.
  The result follows after noting that for semistable curves we have $\LLa(E^{\psi},\chi)=\LL(E^{\psi},\chi)$.
\end{proof}

More generally the corollary holds as long as no prime $p$ of additive reduction for $E$ achieves good reduction over a cyclic extension (necessearily of degree $3$, $4$ or $5$) without doing so over a quadratic extension.
The curve of smallest conductor for which this happens is 147b1; two other curves are explained below in Example~5 and~6.
For this reason the tables in Section~\ref{table_sec} are complete with all non-integral values of $\LL(E,\chi)$ since the conductor there is limited to $100$.

\section{Examples}\label{ex_sec}
We wish to end by listing a few examples of non-integral values of $\LL(E,\chi)$ to demonstrate that the assumptions in the statements of the theorems are really needed.

All computations with modular symbols were done using Sage~\cite{sagemath} with the implementation described in~\cite{wuthrich_numerical}.
We used Magma~\cite{magma} for the computation of L-values.
For all examples $c_1=1$ as expected.

\begin{enumerate}[label=\textbf{Example~\arabic*}:, wide, labelindent=0pt]
\item 
For a curve like~$X_0(11)$
\[
 y^2 + y = x^3 - x^2 - 10\,x - 20,
\]
the Manin constant $c_0$ is $1$ and the conductor is square-free.
Therefore all values $\LL(E,\chi)$ will be integral by Theorem~\ref{semistable_thm}.
This, despite the fact that the modular symbols $[r]^+$ have denominator $10$ for many $r$; for instance $\bigl[\tfrac{1}{3}\bigr]^+ = -\tfrac{3}{10}$.
\item 
For the semistable curve~$X_1(11)$
\[
  y^2 + y = x^3 - x^2,
\]
which is 11a3 in Cremona's database, the Manin constant is~$c_0=5$.
The modular symbols like~$[0]^+ = \tfrac{1}{25}$ and~$[\tfrac{1}{2}]^+ = - \tfrac{4}{25}$ have large denominator.
By Proposition~\ref{x1_prop}, only for characters of conductor $11$, we could have non-integral $\LL(E,\chi)$.
Indeed, the character~$\chi$ of conductor~$11$ and order~$5$ sending~$2$ to~$\zeta_5$ produces~$\LL(E,\chi) = \tfrac{1}{5} (2+4\zeta_5+\zeta_5^2+3\zeta_5^3)$.
This value and the conjugate ones under $\Gal\bigl(\QQ(\zeta_5)/\QQ\bigr)$ are all values that are non-integral for this curve.
\item 
Next, we give an illustration of Theorem~\ref{mainthm_sec}. 
In particular, we show that non-integral $L$-values are possible despite having no isogenies defined over $\QQ$.
Let $E$ be the elliptic curve
\[
  E\colon\qquad y^2 = x^3-7\,x + 7,
\]
with label 392f1.
Then $E$ does not admit an isogeny over~$\QQ$.
This curve is not semistable and~$\Delta=2^4\cdot 7^2$ is a square.
For the Dirichlet character~$\chi$ of conductor~$7$ and order~$3$ sending~$3$ to~$\zeta_3$, we find~$\LLa(E,\chi) = \tfrac{1}{2}(2+\zeta_3)$.
Here~$E$ acquires all~$2$-torsion points over the cyclic cubic field with polynomial~$x^3-7x+7$.
The two places of additive reduction are still additive over that field.
Therefore $\LL(E,\chi) = \LLa(E,\chi)$.
\item 
Next, an example with new torsion points of order $5$ over $K_\chi$ demonstrating Corollary~\ref{lachi_tors_cor}:
The curve~75a1 has no torsion points defined over~$\QQ$, but there are $5$-torsion points defined over~$\QQ(\zeta_5)$ which are not fixed by complex conjugation.
For the Dirichlet character~$\chi$ of conductor~$5$ and order~$4$ sending~$2$ to~$i$, we obtain~$\LL(E,\chi)=\tfrac{1}{5}(2-i)$.
The reduction type IV stays the same in the extension $K_\chi/\QQ$ at $p=5$ so that $\LL(E,\chi) = \LLa(E,\chi)$.
\end{enumerate}

Now we pass to some examples where the reduction type changes in the extension $K_\chi/\QQ$, and thus where the conclusion of Lemma~\ref{la_lemma} fails. In particular, we will see the differences between $\LLa(E,\chi)$ and $\LL(E,\chi)$ explicitly.

\begin{enumerate}[resume*]
\item 
Let~$E$ be the curve labelled 162b1.
This curve has a rational $3$-torsion point over~$\QQ$.
This time the curve acquires a new $7$-torsion point over the maximal real subfield~$\QQ(\zeta_9)^+$ of~$\QQ(\zeta_9)$.
For the Dirichlet character of conductor~$9$ and order~$3$ sending~$2$ to~$\zeta_3$, we find~$\LLa(E,\chi) = \tfrac{1}{7}(3+\zeta_3)$.
The correction factor here is the local factor $(1+(1-\zeta_3)T)^{-1}$ evaluated at $\tfrac{1}{3}$ which gives
$\mathfrak{C}(E,\chi,3) = \tfrac{1}{7}(5+\zeta_3)$.
We obtain $\LL(E,\chi) = \tfrac{1}{7}(2+\zeta_3)$, which is still not integral.

This is also an example for which we would not know how to prove that Corollary~\ref{main_cor} holds.
The characters $\psi$ whose $3$-primary part is equal to the above $\chi$ will all have a non-trivial correction term $\mathfrak{C}(E,\psi,3)$.
Although we have verified numerically that many of them have integral $\LL(E,\psi)$, we have no argument to guarantee that they are all integral apart from $\LL(E,\chi)$ and $\LL(E,\bar{\chi})$.
\item 
Next we let $E$ be the curve 150a1.
This curve has additive reduction of type III over $\QQ$, but has good reduction over $\QQ(\zeta_5)$ with $10$ points in the reduction.
The curve has a $2$-torsion point defined over $\QQ$ and over $\QQ(\zeta_5)$ the torsion subgroup is of order $10$.
Take $\chi$ to be the character of order $4$ and conductor $5$ sending $2$ to $i$.
Then the $L$-values are $\LLa(E,\chi) = \tfrac{1}{5}(2+i)$ which has norm $\tfrac{1}{5}$; however $\LL(E,\chi) = \tfrac{1}{10}(3+i)$ of norm $\tfrac{1}{10}$.
The local factor of $\LL(E,\chi,s)$ for the prime $p=5$ is $(1+(2-i)T)^{-1}$ with $T=5^{-s}$.
Again, $\LL(E,\chi)$ is not integral, but this time we even have a new factor $2$ in the denominator.
Also Corollary~\ref{main_cor} is not known to hold here.
\item 
Our final example is the curve 99b1 and the non-trivial character $\chi$ of conductor~$3$.
The values $\LLa(E,\chi) = 2$ is integral, but $\LL(E,\chi)= \tfrac{3}{2}$ is not integral.
The Mordell-Weil group $E(\QQ)$ is cyclic of order $4$.
The curve does not acquire any new torsion points over $K_{\chi} = \QQ(\zeta_3)$.
This may sound surprising when considering the Birch and Swinnerton-Dyer conjecture over both fields.
While the Tate-Shafarevich group is known to be trivial over both fields, the Tamagawa number for $E/\QQ$ at $3$ is $4$, while it is $2$ at the unique place above $3$ in $K_{\chi}$.
Just like the discrepancy between $\LL(E,\chi)$ and $\LLa(E,\chi)$ is due to the change from additive (type I${}_3^*$) reduction to non-split multiplicative (type I${}_6$) reduction, so is the change of the Tamagawa number and the periods as the global minimal equation for $E/\QQ$ is no longer minimal over $K_{\chi}$.

This example illustrates that one cannot hope to link the non-integrality of $\LL(E,\chi)$ to the appearance of new torsion points defined over $K_\chi$ in the same way as we were able to do for $\LLa(E,\chi)$ in Corollary~\ref{lachi_tors_cor}.

\end{enumerate}

\section*{Table}\label{table_sec}


The following table contains all $(E,\chi)$ for which the value of $\LL(E,\chi)$ is not integral and the conductor of $E$ is below $100$.
Only one character $\chi$ for each conjugacy class is listed and the trivial character is omitted for all curves.

The value of $\LLa(E,\chi)$ is only mentioned when it differs from $\LL(E,\chi)$.
The fourth column lists if the minimal discriminant $\Delta$ is a square or not.
We use $t(\QQ)$ and $t(K_{\chi})$ to denote the order of the torsion subgroup of $E(\QQ)$ and $E(K_{\chi})$ respectively.

If the character $\chi$ is quadratic corresponding to $\QQ\bigl(\sqrt{D}\bigr)$, we write $(D/\cdot)$.
Otherwise we give the primitive elements that are sent to the $d$-th root of unity.

\tablecaption{All non-integral $\LL(E,\chi)$ for $N<100$}
\tablehead{
\toprule
Curve &  $c_0$ & $c_\infty$  & $\square?$ &  $t(\mathbb{Q})$ &  $t(K_{\chi})$ &  $m$ & $\chi$ & $\LLa$ & $\LL(E,\chi)$ \\
\midrule}%
\tablefirsthead{
\toprule
Curve &  $c_0$ & $c_\infty$  & $\square?$ &  $t(\mathbb{Q})$ &  $t(K_{\chi})$  &  $m$ & $\chi$ & $\LLa$ & $\LL(E,\chi)$ \\
\midrule}%
\tabletail{\bottomrule}%
\tablelasttail{\bottomrule}%

\begin{xtabular}{lrrlrrrlrr}
 11a3 & 5 & 1 & no & 5 & 25 & 11 & $2 \mapsto \zeta_{5}$ &   & $(2+4\zeta_5+\zeta_5^2+3\zeta_5^3)/5$ \\
 14a4 & 3 & 1 & no & 6 & 18 & 7 & $3 \mapsto \zeta_{3}$ &   & $(1-\zeta_3)/3$ \\
 14a6 & 3 & 2 & no & 6 & 18 & 7 & $3 \mapsto \zeta_{3}$ &   & $(1-\zeta_3)/3$ \\
 15a3 & 2 & 2 & yes & 8 & 16 & 5 & $(5/\cdot)$ &   & $1/2$ \\
 15a7 & 2 & 2 & no & 4 & 8 & 5 & $(5/\cdot)$ &   & $1/2$ \\
 15a8 & 4 & 1 & no & 4 & 8 & 3 & $(-3/\cdot)$ &   & $1/2$ \\
 15a8 & 4 & 1 & no & 4 & 16 & 5 & $2 \mapsto i$ &   & $(1+i)/2$ \\
 15a8 & 4 & 1 & no & 4 & 8 & 5 & $(5/\cdot)$ &   & $1/2$ \\
 20a2 & 2 & 2 & no & 6 & 12 & 5 & $(5/\cdot)$ &   & $1/2$ \\
 20a4 & 2 & 2 & no & 2 & 4 & 5 & $(5/\cdot)$ &   & $3/2$ \\
 21a4 & 2 & 1 & no & 4 & 8 & 3 & $(-3/\cdot)$ &   & $1/2$ \\
 21a4 & 2 & 1 & no & 4 & 8 & 7 & $(-7/\cdot)$ &   & $1/2$ \\
 24a4 & 2 & 1 & no & 4 & 8 & 4 & $(-1/\cdot)$ &   & $1/2$ \\
 24a4 & 2 & 1 & no & 4 & 8 & 3 & $(-3/\cdot)$ &   & $1/2$ \\
 26a3 & 3 & 1 & no & 3 & 9 & 13 & $2 \mapsto \zeta_{3}$ &   & $(2+\zeta_3)/3$ \\
 27a1 & 1 & 1 & no & 3 & 9 & 3 & $(-3/\cdot)$ &   & $1/3$ \\
 27a2 & 1 & 1 & no & 1 & 3 & 3 & $(-3/\cdot)$ &   & $1/3$ \\
 27a3 & 3 & 1 & no & 3 & 9 & 9 & $2 \mapsto \zeta_{3}$ &   & $(2+\zeta_3)/3 $ \\
 27a3 & 3 & 1 & no & 3 & 9 & 3 & $(-3/\cdot)$ &   & $1/3$ \\
 27a4 & 3 & 1 & no & 3 & 9 & 9 & $2 \mapsto \zeta_{3}$ &   & $(2+\zeta_3)/3$ \\
 32a1 & 1 & 1 & no & 4 & 8 & 4 & $(-1/\cdot)$ &   & $1/2$ \\
 32a2 & 2 & 2 & yes & 4 & 8 & 4 & $(-1/\cdot)$ &   & $1/2$ \\
 32a2 & 2 & 2 & yes & 4 & 8 & 8 & $(2/\cdot)$ &   & $1/2$ \\
 32a3 & 2 & 2 & no & 2 & 4 & 4 & $(-1/\cdot)$ &   & $1/2$ \\
 32a4 & 2 & 2 & no & 4 & 8 & 8 & $(2/\cdot)$ &   & $1/2$ \\
 33a2 & 2 & 2 & no & 2 & 4 & 3 & $(-3/\cdot)$ &   & $1/2$ \\
 33a2 & 2 & 2 & no & 2 & 4 & 11 & $(-11/\cdot)$ &   &$1/2$ \\
 35a3 & 3 & 1 & no & 3 & 9 & 7 & $3 \mapsto \zeta_{3}$ &   & $(1-\zeta_3)/3$ \\
 36a1 & 1 & 1 & no & 6 & 12 & 3 & $(-3/\cdot)$ &   & $1/2$ \\
 36a3 & 1 & 1 & no & 2 & 12 & 3 & $(-3/\cdot)$ &   & $1/2$ \\
 40a3 & 2 & 2 & no & 4 & 8 & 5 & $(5/\cdot)$ &   & $1/2$ \\
 45a1 & 1 & 1 & no & 2 & 8 & 3 & $(-3/\cdot)$ &  $1/4$ & $3/16$ \\
 45a2 & 1 & 2 & yes & 4 & 8 & 3 & $(-3/\cdot)$ & $1/2$ & $3/8$ \\
 45a3 & 1 & 2 & no & 2 & 4 & 3 & $(-3/\cdot)$ & $1/2$ & $3/8$ \\
 45a4 & 1 & 2 & yes & 4 & 8 & 3 & $(-3/\cdot)$ &  $ 1 $  & $3/4$ \\
 45a5 & 1 & 2 & yes & 4 & 4 & 3 & $(-3/\cdot)$ &  $ 2 $  & $3/2$ \\
 45a6 & 1 & 1 & no & 2 & 8 & 3 & $(-3/\cdot)$ &  $ 1 $  &  $3/4$ \\
 45a8 & 1 & 1 & no & 2 & 2 & 3 & $(-3/\cdot)$ &  $ 2 $  & $3/2$ \\
 48a1 & 1 & 2 & yes & 4 & 8 & 4 & $(-1/\cdot)$ &   & $1/2$ \\
 48a2 & 1 & 2 & no & 2 & 4 & 4 & $(-1/\cdot)$ &   & $1/2$ \\
 48a4 & 2 & 1 & no & 2 & 8 & 4 & $(-1/\cdot)$ &   & $1/4$ \\
 48a4 & 2 & 1 & no & 2 & 4 & 3 & $(-3/\cdot)$ &   & $1/2$ \\
 49a1 & 1 & 1 & no & 2 & 28 & 7 & $3 \mapsto \zeta_{3} + 1$ &   & $(3+2\zeta_3)/7$\\
 49a1 & 1 & 1 & no & 2 & 4 & 7 & $(-7/\cdot)$ &   & $1/2$ \\
 49a2 & 1 & 2 & no & 2 & 14 & 7 & $3 \mapsto \zeta_{3} + 1$ &   & $(6+4\zeta_3)/7$ \\
 49a3 & 1 & 1 & no & 2 & 4 & 7 & $(-7/\cdot)$ &   & $7/2$ \\
 50a1 & 1 & 1 & no & 3 & 15 & 5 & $2 \mapsto i$ &   & $(1+2i)/5$ \\
 50a2 & 1 & 1 & no & 1 & 5 & 5 & $2 \mapsto i$ &   & $(1+2i)/5$\\
 50b1 & 1 & 1 & no & 5 & 15 & 5 & $(5/\cdot)$ &   & $1/3$ \\
 50b3 & 1 & 1 & no & 1 & 3 & 5 & $(5/\cdot)$ &   & $5/3$\\
 52a2 & 2 & 2 & no & 2 & 4 & 13 & $(13/\cdot)$ &   &$3/2$ \\
 54a3 & 3 & 1 & no & 3 & 9 & 9 & $2 \mapsto \zeta_{3}$ &   & $(1+2\zeta_3)/3$ \\
 54b1 & 1 & 1 & no & 3 & 9 & 3 & $(-3/\cdot)$ &   & $1/3$ \\
 54b2 & 1 & 1 & no & 1 & 3 & 3 & $(-3/\cdot)$ &   & $1/3$ \\
 57b2 & 2 & 2 & no & 2 & 4 & 3 & $(-3/\cdot)$ &   & $1/2$ \\
 57b2 & 2 & 2 & no & 2 & 4 & 19 & $(-19/\cdot)$ &   & $1/2$ \\
 63a1 & 1 & 1 & no & 2 & 8 & 3 & $(-3/\cdot)$ &  $1/4$ & $3/8$ \\
 63a2 & 1 & 2 & yes & 4 & 16 & 3 & $(-3/\cdot)$ &  $1/2$ & $3/4$ \\
 63a3 & 1 & 2 & no & 2 & 8 & 3 & $(-3/\cdot)$ &  $1/2$ & $3/4$ \\
 63a4 & 1 & 2 & yes & 4 & 8 & 3 & $(-3/\cdot)$ &  $ 1 $  & $3/2$ \\
 63a6 & 1 & 1 & no & 2 & 4 & 3 & $(-3/\cdot)$ &  $ 1 $  & $3/2$ \\
 64a1 & 1 & 2 & yes & 4 & 8 & 8 & $(2/\cdot)$ &   & $1/2$ \\
 64a3 & 1 & 2 & no & 4 & 8 & 8 & $(2/\cdot)$ &   & $1/2$ \\
 64a4 & 2 & 1 & no & 2 & 4 & 4 & $(-1/\cdot)$ &   & $1/2$ \\
 64a4 & 2 & 1 & no & 2 & 4 & 8 & $(2/\cdot)$ &   & $1/2$ \\
 64a4 & 2 & 1 & no & 2 & 4 & 8 & $(2/\cdot)$ &   & $1/2$ \\
 72a1 & 1 & 1 & no & 4 & 8 & 3 & $(-3/\cdot)$ &  $1/2$ & $3/8$ \\
 72a1 & 1 & 1 & no & 4 & 4 & 12 & $(3/\cdot)$ &  $ 1 $  & $3/2$ \\
 72a2 & 1 & 2 & yes & 4 & 8 & 3 & $(-3/\cdot)$ &  $ 1 $  & $3/4$ \\
 72a2 & 1 & 2 & yes & 4 & 8 & 12 & $(3/\cdot)$ &  $ 1 $  & $3/2$ \\
 72a3 & 1 & 2 & no & 2 & 4 & 3 & $(-3/\cdot)$ &  $ 1 $  & $3/4$\\
 72a4 & 1 & 2 & yes & 4 & 4 & 3 & $(-3/\cdot)$ &  $ 2 $  &$3/2$ \\
 72a4 & 1 & 2 & yes & 4 & 16 & 12 & $(3/\cdot)$ &  $ 1 $  &$3/2$ \\
 72a5 & 1 & 2 & no & 2 & 8 & 12 & $(3/\cdot)$ &  $ 1 $  & $3/2$ \\
 72a6 & 1 & 1 & no & 2 & 2 & 3 & $(-3/\cdot)$ &  $ 2 $  & $3/2$ \\
 75a1 & 1 & 1 & no & 1 & 5 & 5 & $2 \mapsto i$ &   & $(2-i)/5$ \\
 75b1 & 1 & 1 & no & 2 & 16 & 5 & $2 \mapsto i$ &   & $(3+i)/2$\\
 75b1 & 1 & 1 & no & 2 & 8 & 5 & $(5/\cdot)$ &  $1/4$ & $5/16$ \\
 75b2 & 1 & 2 & yes & 4 & 16 & 5 & $(5/\cdot)$ &  $1/4$ & $ 5/16$\\
 75b3 & 1 & 2 & yes & 4 & 16 & 5 & $(5/\cdot)$ &  $1/2$ & $5/8$ \\
 75b4 & 1 & 2 & no & 2 & 8 & 5 & $(5/\cdot)$ &  $1/4$ & $5/16$ \\
 75b5 & 1 & 2 & yes & 4 & 8 & 5 & $(5/\cdot)$ &  $ 1 $  & $5/4$ \\
 75b6 & 1 & 1 & no & 2 & 8 & 5 & $(5/\cdot)$ &  $ 1 $  & $5/4$\\
 75b7 & 1 & 2 & no & 2 & 4 & 5 & $(5/\cdot)$ &  $ 2 $  & $5/2$\\
 75b8 & 1 & 1 & no & 4 & 4 & 5 & $(5/\cdot)$ &  $ 2 $  & $5/2$\\
 77b3 & 3 & 1 & no & 3 & 9 & 7 & $3 \mapsto \zeta_{3}$ &   & $(2+\zeta_3)/3$\\
 80a2 & 2 & 2 & no & 2 & 4 & 4 & $(-1/\cdot)$ &   & $1/2$ \\
 80b1 & 1 & 1 & no & 2 & 12 & 4 & $(-1/\cdot)$ &   & $1/3$ \\
 80b2 & 2 & 2 & no & 2 & 6 & 4 & $(-1/\cdot)$ &   & $1/3$ \\
 80b2 & 2 & 2 & no & 2 & 4 & 5 & $(5/\cdot)$ &   & $1/2$ \\
 80b4 & 2 & 2 & no & 2 & 4 & 5 & $(5/\cdot)$ &   & $1/2$ \\
 90c1 & 1 & 1 & no & 4 & 12 & 3 & $(-3/\cdot)$ &  $1/3$ & $1/2$ \\
 90c3 & 1 & 1 & no & 12 & 12 & 3 & $(-3/\cdot)$ &  $ 1 $  & $3/2$ \\
 98a1 & 1 & 1 & no & 2 & 36 & 7 & $3 \mapsto \zeta_{3} + 1$ &   & $(4+5\zeta_3)/3$ \\
 98a1 & 1 & 1 & no & 2 & 12 & 7 & $(-7/\cdot)$ &  $ 1/3 $  & $7/18$ \\
 98a2 & 1 & 2 & no & 2 & 18 & 7 & $3 \mapsto \zeta_{3} + 1$ &   & $(8+10\zeta_3)/3$\\
 98a2 & 1 & 2 & no & 2 & 6 & 7 & $(-7/\cdot)$ & $2/3$ & $7/9$ \\
 98a3 & 1 & 1 & no & 2 & 12 & 7 & $(-7/\cdot)$ &  $ 1 $  & $7/6$ \\
 98a4 & 1 & 2 & no & 2 & 6 & 7 & $(-7/\cdot)$ &  $ 2 $  & $7/3$ \\
 98a5 & 1 & 1 & no & 2 & 4 & 7 & $(-7/\cdot)$ &  $ 3 $  & $7/2$ \\
 99b1 & 1 & 2 & no & 4 & 4 & 3 & $(-3/\cdot)$ &  $ 2 $  & $3/2$ \\
 99b2 & 1 & 2 & yes & 4 & 4 & 3 & $(-3/\cdot)$ &  $ 2 $  & $3/2$ \\
 99b3 & 1 & 2 & no & 2 & 4 & 3 & $(-3/\cdot)$ &  $ 2 $  & $3/2$ \\
 99b4 & 1 & 1 & no & 2 & 2 & 3 & $(-3/\cdot)$ &  $ 2 $  & $3/2$ \\
 99d1 & 1 & 1 & no & 1 & 5 & 3 & $(-3/\cdot)$ &  $1/5$ & $3/25$ \\
 99d2 & 1 & 1 & no & 1 & 5 & 3 & $(-3/\cdot)$ &  $ 1 $  & $3/5$ \\
\bottomrule
  \end{xtabular}

\bibliographystyle{amsplain}
\bibliography{ilv}

\providecommand{\bysame}{\leavevmode\hbox to3em{\hrulefill}\thinspace}
\providecommand{\MR}{\relax\ifhmode\unskip\space\fi MR }
\providecommand{\MRhref}[2]{%
  \href{http://www.ams.org/mathscinet-getitem?mr=#1}{#2}
}
\providecommand{\href}[2]{#2}
\begin{thebibliography}{10}

\bibitem{manin_constant}
Amod Agashe, Kenneth Ribet, and William~A. Stein, \emph{The {M}anin constant},
  Pure Appl. Math. Q. \textbf{2} (2006), no.~2, part 2, 617--636.

\bibitem{magma}
Wieb Bosma, John Cannon, and Catherine Playoust, \emph{The {M}agma algebra
  system. {I}. {T}he user language}, J. Symbolic Comput. \textbf{24} (1997),
  no.~3-4, 235--265, Computational algebra and number theory (London, 1993).

\bibitem{ebsd}
David Burns and Daniel~Macias Castillo, \emph{On refined conjectures of {Birch}
  and {Swinner\-ton-Dyer} type for {Hasse}-{Weil}-{Artin} {$L$-series}},
  available at \url{https://arxiv.org/abs/1909.03959}, 2019.

\bibitem{ces}
Kęstutis {\v C}esnavičius, \emph{The {M}anin constant in the semistable
  case}, Compos. Math. \textbf{154} (2018), no.~9, 1889--1920.

\bibitem{cremona}
John~E. Cremona, \emph{Algorithms for modular elliptic curves}, second ed.,
  Cambridge University Press, 1997.

\bibitem{diamond_shurman}
Fred Diamond and Jerry Shurman, \emph{A first course in modular forms},
  Graduate Texts in Mathematics, vol. 228, Springer-Verlag, New York, 2005.

\bibitem{dew}
Vladimir Dokchitser, Robert Evans, and Hanneke Wiersema, \emph{On a {BSD}-type
  formula for {$L$}-values of {A}rtin twists of elliptic curves}, J. Reine
  Angew. Math. \textbf{773} (2021), 199--230.

\bibitem{drinfeld}
Vladimir~G. Drinfeld, \emph{Two theorems on modular curves}, Funkcional. Anal.
  i Prilo\v zen. \textbf{7} (1973), no.~2, 83--84.

\bibitem{kenku}
M.~A. Kenku, \emph{On the modular curves {$X_{0}(125)$}, {$X_{1}(25)$} and
  {$X_{1}(49)$}}, J. London Math. Soc. (2) \textbf{23} (1981), no.~3, 415--427.

\bibitem{lorenzini}
Dino Lorenzini, \emph{Torsion and {T}amagawa numbers}, Ann. Inst. Fourier
  (Grenoble) \textbf{61} (2011), no.~5, 1995--2037 (2012).

\bibitem{manin}
Yuri~I. Manin, \emph{Parabolic points and zeta functions of modular curves},
  Izv. Akad. Nauk SSSR Ser. Mat. \textbf{36} (1972), 19--66.

\bibitem{mtt}
Barry Mazur, John Tate, and Jeremy Teitelbaum, \emph{On {$p$}-adic analogues of
  the conjectures of {B}irch and {S}winnerton-{D}yer}, Invent. Math.
  \textbf{84} (1986), no.~1, 1--48.

\bibitem{rouse_zureickbrown}
Jeremy Rouse and David Zureick-Brown, \emph{Elliptic curves over {$\QQ$} and
  2-adic images of {G}alois}, Res. Number Theory \textbf{1} (2015), Art. 12,
  34.

\bibitem{serregl2}
Jean-Pierre Serre, \emph{Propri\'et\'es galoisiennes des points d'ordre fini
  des courbes elliptiques}, Invent. Math. \textbf{15} (1972), no.~4, 259--331.

\bibitem{stevens}
Glenn Stevens, \emph{Arithmetic on modular curves}, Progress in Mathematics,
  vol.~20, Birkh\"auser Boston Inc., Boston, MA, 1982.

\bibitem{stevens89}
\bysame, \emph{Stickelberger elements and modular parametrizations of elliptic
  curves}, Invent. Math. \textbf{98} (1989), no.~1, 75--106.

\bibitem{sagemath}
{The Sage Developers}, \emph{{S}agemath, the {S}age {M}athematics {S}oftware
  {S}ystem ({V}ersion 9.1)}, 2020, {\tt https://www.sagemath.org}.

\bibitem{wuthrich_integral}
Christian Wuthrich, \emph{On the integrality of modular symbols and {K}ato's
  {E}uler system for elliptic curves}, Doc. Math. \textbf{19} (2014), 381--402.

\bibitem{wuthrich_numerical}
\bysame, \emph{Numerical modular symbols for elliptic curves}, Math. Comp.
  \textbf{87} (2018), no.~313, 2393--2423.

\end{thebibliography}

\end{document}